\newtheorem{theorem}{Theorem}
\theoremstyle{plain}
\newtheorem{axiom}{Axiom}
\newtheorem{conjecture}{Conjecture}
\newtheorem{corollary}{Corollary}
\newtheorem{definition}{Definition}
\newtheorem{example}{Example}
\newtheorem{exercise}{Exercise}
\newtheorem{lemma}{Lemma}
\newtheorem{proposition}{Proposition}
\newtheorem{remark}{Remark}
\numberwithin{equation}{section}
\numberwithin{theorem}{section}
\numberwithin{algorithm}{section}
\numberwithin{axiom}{section}
\numberwithin{case}{section}
\numberwithin{claim}{section}
\numberwithin{conclusion}{section}
\numberwithin{condition}{section}
\numberwithin{conjecture}{section}
\numberwithin{corollary}{section}
\numberwithin{criterion}{section}
\numberwithin{definition}{section}
\numberwithin{example}{section}
\numberwithin{exercise}{section}
\numberwithin{lemma}{section}
\numberwithin{notation}{section}
\numberwithin{problem}{section}
\numberwithin{proposition}{section}
\numberwithin{remark}{section}
\numberwithin{solution}{section}
\chardef\@x10\chardef\@xv60
\def\tcitime{
\def\@time{%
  \@minute\time\@hour\@minute\divide\@hour\@xv
  \ifnum\@hour<\@x 0\fi\the\@hour:%
  \multiply\@hour\@xv\advance\@minute-\@hour
  \ifnum\@minute<\@x 0\fi\the\@minute
  }}%
\def\QCTOpt[#1]#2{%
  \def\QCTOptB{#1}
  \def\QCTOptA{#2}
}
\def\QCTNOpt#1{%
  \def\QCTOptA{#1}
  \let\QCTOptB\empty
}
\def\Qct{%
  \@ifnextchar[{%
    \QCTOpt}{\QCTNOpt}
}
\def\QCBOpt[#1]#2{%
  \def\QCBOptB{#1}
  \def\QCBOptA{#2}
}
\def\QCBNOpt#1{%
  \def\QCBOptA{#1}
  \let\QCBOptB\empty
}
\def\Qcb{%
  \@ifnextchar[{%
    \QCBOpt}{\QCBNOpt}
}
\def\PrepCapArgs{%
  \ifx\QCBOptA\empty
    \ifx\QCTOptA\empty
      {}%
    \else
      \ifx\QCTOptB\empty
        {\QCTOptA}%
      \else
        [\QCTOptB]{\QCTOptA}%
      \fi
    \fi
  \else
    \ifx\QCBOptA\empty
      {}%
    \else
      \ifx\QCBOptB\empty
        {\QCBOptA}%
      \else
        [\QCBOptB]{\QCBOptA}%
      \fi
    \fi
  \fi
}
\def\GRAPHICSPS#1{%
 \ifcase\GRAPHICSTYPE
   \special{ps: #1}%
 \or
   \special{language "PS", include "#1"}%
 \fi
}%
\def\graffile#1#2#3#4{%
    \bgroup
    \leavevmode
    \@ifundefined{bbl@deactivate}{\def~{\string~}}{\activesoff}
    \raise -#4 \BOXTHEFRAME{%
        \hbox to #2{\raise #3\hbox to #2{\null #1\hfil}}}%
    \egroup
}%
\def\draftbox#1#2#3#4{%
 \leavevmode\raise -#4 \hbox{%
  \frame{\rlap{\protect\tiny #1}\hbox to #2%
   {\vrule height#3 width\z@ depth\z@\hfil}%
  }%
 }%
}%
\newif\ifwasdraft
\def\GRAPHIC#1#2#3#4#5{%
 \ifnum\draft=\@ne\draftbox{#2}{#3}{#4}{#5}%
  \else\graffile{#1}{#3}{#4}{#5}%
  \fi
 }%
\def\addtoLaTeXparams#1{%
    \edef\LaTeXparams{\LaTeXparams #1}}%
\newif\ifBoxFrame \BoxFramefalse
\newif\ifOverFrame \OverFramefalse
\newif\ifUnderFrame \UnderFramefalse
\def\BOXTHEFRAME#1{%
   \hbox{%
      \ifBoxFrame
         \frame{#1}%
      \else
         {#1}%
      \fi
   }%
}
\def\doFRAMEparams#1{\BoxFramefalse\OverFramefalse\UnderFramefalse\readFRAMEparams#1\end}%
\def\readFRAMEparams#1{%
 \ifx#1\end%
  \let\next=\relax
  \else
  \ifx#1i\dispkind=\z@\fi
  \ifx#1d\dispkind=\@ne\fi
  \ifx#1f\dispkind=\tw@\fi
  \ifx#1t\addtoLaTeXparams{t}\fi
  \ifx#1b\addtoLaTeXparams{b}\fi
  \ifx#1p\addtoLaTeXparams{p}\fi
  \ifx#1h\addtoLaTeXparams{h}\fi
  \ifx#1X\BoxFrametrue\fi
  \ifx#1O\OverFrametrue\fi
  \ifx#1U\UnderFrametrue\fi
  \ifx#1w
    \ifnum\draft=1\wasdrafttrue\else\wasdraftfalse\fi
    \draft=\@ne
  \fi
  \let\next=\readFRAMEparams
  \fi
 \next
 }%
\def\IFRAME#1#2#3#4#5#6{%
      \bgroup
      \let\QCTOptA\empty
      \let\QCTOptB\empty
      \let\QCBOptA\empty
      \let\QCBOptB\empty
      #6%
      \parindent=0pt%
      \leftskip=0pt
      \rightskip=0pt
      \setbox0 = \hbox{\QCBOptA}%
      \@tempdima = #1\relax
      \ifOverFrame
          \typeout{This is not implemented yet}%
          \show\HELP
      \else
         \ifdim\wd0>\@tempdima
            \advance\@tempdima by \@tempdima
            \ifdim\wd0 >\@tempdima
               \textwidth=\@tempdima
               \setbox1 =\vbox{%
                  \noindent\hbox to \@tempdima{\hfill\GRAPHIC{#5}{#4}{#1}{#2}{#3}\hfill}\\%
                  \noindent\hbox to \@tempdima{\parbox[b]{\@tempdima}{\QCBOptA}}%
               }%
               \wd1=\@tempdima
            \else
               \textwidth=\wd0
               \setbox1 =\vbox{%
                 \noindent\hbox to \wd0{\hfill\GRAPHIC{#5}{#4}{#1}{#2}{#3}\hfill}\\%
                 \noindent\hbox{\QCBOptA}%
               }%
               \wd1=\wd0
            \fi
         \else
            \ifdim\wd0>0pt
              \hsize=\@tempdima
              \setbox1 =\vbox{%
                \unskip\GRAPHIC{#5}{#4}{#1}{#2}{0pt}%
                \break
                \unskip\hbox to \@tempdima{\hfill \QCBOptA\hfill}%
              }%
              \wd1=\@tempdima
           \else
              \hsize=\@tempdima
              \setbox1 =\vbox{%
                \unskip\GRAPHIC{#5}{#4}{#1}{#2}{0pt}%
              }%
              \wd1=\@tempdima
           \fi
         \fi
         \@tempdimb=\ht1
         \advance\@tempdimb by \dp1
         \advance\@tempdimb by -#2%
         \advance\@tempdimb by #3%
         \leavevmode
         \raise -\@tempdimb \hbox{\box1}%
      \fi
      \egroup%
}%
\def\DFRAME#1#2#3#4#5{%
 \begin{center}
     \let\QCTOptA\empty
     \let\QCTOptB\empty
     \let\QCBOptA\empty
     \let\QCBOptB\empty
     \ifOverFrame 
        #5\QCTOptA\par
     \fi
     \GRAPHIC{#4}{#3}{#1}{#2}{\z@}
     \ifUnderFrame 
        \nobreak\par\nobreak#5\QCBOptA
     \fi
 \end{center}%
 }%
\def\FFRAME#1#2#3#4#5#6#7{%
 \begin{figure}[#1]%
  \let\QCTOptA\empty
  \let\QCTOptB\empty
  \let\QCBOptA\empty
  \let\QCBOptB\empty
  \ifOverFrame
    #4
    \ifx\QCTOptA\empty
    \else
      \ifx\QCTOptB\empty
        \caption{\QCTOptA}%
      \else
        \caption[\QCTOptB]{\QCTOptA}%
      \fi
    \fi
    \ifUnderFrame\else
      \label{#5}%
    \fi
  \else
    \UnderFrametrue%
  \fi
  \begin{center}\GRAPHIC{#7}{#6}{#2}{#3}{\z@}\end{center}%
  \ifUnderFrame
    #4
    \ifx\QCBOptA\empty
      \caption{}%
    \else
      \ifx\QCBOptB\empty
        \caption{\QCBOptA}%
      \else
        \caption[\QCBOptB]{\QCBOptA}%
      \fi
    \fi
    \label{#5}%
  \fi
  \end{figure}%
 }%
\def\makeactives{
  \catcode`\"=\active
  \catcode`\;=\active
  \catcode`\:=\active
  \catcode`\'=\active
  \catcode`\~=\active
}
   \gdef\activesoff{%
      \def"{\string"}
      \def;{\string;}
      \def:{\string:}
      \def'{\string'}
      \def~{\string~}
    }
\def\FRAME#1#2#3#4#5#6#7#8{%
 \bgroup
 \ifnum\draft=\@ne
   \wasdrafttrue
 \else
   \wasdraftfalse%
 \fi
 \def\LaTeXparams{}%
 \dispkind=\z@
 \def\LaTeXparams{}%
 \doFRAMEparams{#1}%
 \ifnum\dispkind=\z@\IFRAME{#2}{#3}{#4}{#7}{#8}{#5}\else
  \ifnum\dispkind=\@ne\DFRAME{#2}{#3}{#7}{#8}{#5}\else
   \ifnum\dispkind=\tw@
    \edef\@tempa{\noexpand\FFRAME{\LaTeXparams}}%
    \@tempa{#2}{#3}{#5}{#6}{#7}{#8}%
    \fi
   \fi
  \fi
  \ifwasdraft\draft=1\else\draft=0\fi{}%
  \egroup
 }%
\def\TEXUX#1{"texux"}
\def\func#1{\mathop{\rm #1}\nolimits}%
\long\def\QQQ#1#2{%
     \long\expandafter\def\csname#1\endcsname{#2}}%
\long\def\QQA#1#2{}%
\def\QTR#1#2{{\csname#1\endcsname #2}}
\def\EXPAND#1[#2]#3{}%
\def\NOEXPAND#1[#2]#3{}%
\def\LaTeXparent#1{}%
\def\ChildStyles#1{}%
\def\ChildDefaults#1{}%
\def\QTagDef#1#2#3{}%
  \providecommand{\UNICODE}[2][]{}
\def\QQfnmark#1{\footnotemark}
 \def\abstract{%
  \if@twocolumn
   \section*{Abstract (Not appropriate in this style!)}%
   \else \small 
   \begin{center}{\bf Abstract\vspace{-.5em}\vspace{\z@}}\end{center}%
   \quotation 
   \fi
  }%
   \def\registered{\relax\ifmmode{}\r@gistered
                    \else$\m@th\r@gistered$\fi}%
 \def\r@gistered{^{\ooalign
  {\hfil\raise.07ex\hbox{$\scriptstyle\rm\text{R}$}\hfil\crcr
  \mathhexbox20D}}}}{}%
\newdimen\theight
\def\Column{%
 \vadjust{\setbox\z@=\hbox{\scriptsize\quad\quad tcol}%
  \theight=\ht\z@\advance\theight by \dp\z@\advance\theight by \lineskip
  \kern -\theight \vbox to \theight{%
   \rightline{\rlap{\box\z@}}%
   \vss
   }%
  }%
 }%
\def\qed{%
 \ifhmode\unskip\nobreak\fi\ifmmode\ifinner\else\hskip5\p@\fi\fi
 \hbox{\hskip5\p@\vrule width4\p@ height6\p@ depth1.5\p@\hskip\p@}%
 }%
\def\miss{\hbox{\vrule height2\p@ width 2\p@ depth\z@}}%
\def\tcol#1{{\baselineskip=6\p@ \vcenter{#1}} \Column}  %
\def\newfmtname{LaTeX2e}
  \DeclareOldFontCommand{\rm}{\normalfont\rmfamily}{\mathrm}
  \DeclareOldFontCommand{\sf}{\normalfont\sffamily}{\mathsf}
  \DeclareOldFontCommand{\tt}{\normalfont\ttfamily}{\mathtt}
  \DeclareOldFontCommand{\bf}{\normalfont\bfseries}{\mathbf}
  \DeclareOldFontCommand{\it}{\normalfont\itshape}{\mathit}
  \DeclareOldFontCommand{\sl}{\normalfont\slshape}{\@nomath\sl}
  \DeclareOldFontCommand{\sc}{\normalfont\scshape}{\@nomath\sc}
\def\alpha{{\Greekmath 010B}}%
\def\beta{{\Greekmath 010C}}%
\def\gamma{{\Greekmath 010D}}%
\def\delta{{\Greekmath 010E}}%
\def\epsilon{{\Greekmath 010F}}%
\def\zeta{{\Greekmath 0110}}%
\def\eta{{\Greekmath 0111}}%
\def\theta{{\Greekmath 0112}}%
\def\iota{{\Greekmath 0113}}%
\def\kappa{{\Greekmath 0114}}%
\def\lambda{{\Greekmath 0115}}%
\def\mu{{\Greekmath 0116}}%
\def\nu{{\Greekmath 0117}}%
\def\xi{{\Greekmath 0118}}%
\def\pi{{\Greekmath 0119}}%
\def\rho{{\Greekmath 011A}}%
\def\sigma{{\Greekmath 011B}}%
\def\tau{{\Greekmath 011C}}%
\def\upsilon{{\Greekmath 011D}}%
\def\phi{{\Greekmath 011E}}%
\def\chi{{\Greekmath 011F}}%
\def\psi{{\Greekmath 0120}}%
\def\omega{{\Greekmath 0121}}%
\def\varepsilon{{\Greekmath 0122}}%
\def\vartheta{{\Greekmath 0123}}%
\def\varpi{{\Greekmath 0124}}%
\def\varrho{{\Greekmath 0125}}%
\def\varsigma{{\Greekmath 0126}}%
\def\varphi{{\Greekmath 0127}}%
\def\nabla{{\Greekmath 0272}}
\def\FindBoldGroup{%
   {\setbox0=\hbox{$\mathbf{x\global\edef\theboldgroup{\the\mathgroup}}$}}%
}
\def\Greekmath#1#2#3#4{%
    \if@compatibility
        \ifnum\mathgroup=\symbold
           \mathchoice{\mbox{\boldmath$\displaystyle\mathchar"#1#2#3#4$}}%
                      {\mbox{\boldmath$\textstyle\mathchar"#1#2#3#4$}}%
                      {\mbox{\boldmath$\scriptstyle\mathchar"#1#2#3#4$}}%
                      {\mbox{\boldmath$\scriptscriptstyle\mathchar"#1#2#3#4$}}%
        \else
           \mathchar"#1#2#3#4%
        \fi 
    \else 
        \FindBoldGroup
        \ifnum\mathgroup=\theboldgroup 
           \mathchoice{\mbox{\boldmath$\displaystyle\mathchar"#1#2#3#4$}}%
                      {\mbox{\boldmath$\textstyle\mathchar"#1#2#3#4$}}%
                      {\mbox{\boldmath$\scriptstyle\mathchar"#1#2#3#4$}}%
                      {\mbox{\boldmath$\scriptscriptstyle\mathchar"#1#2#3#4$}}%
        \else
           \mathchar"#1#2#3#4%
        \fi     	    
	  \fi}
\newif\ifGreekBold  \GreekBoldfalse
\let\SAVEPBF=\pbf
\def\pbf{\GreekBoldtrue\SAVEPBF}%
  \newcounter{equationnumber}  
  \def\mathletters{%
     \addtocounter{equation}{1}
     \edef\@currentlabel{\theequation}%
     \setcounter{equationnumber}{\c@equation}
     \setcounter{equation}{0}%
     \edef\theequation{\@currentlabel\noexpand\alph{equation}}%
  }
    \def\BibTeX{{\rm B\kern-.05em{\sc i\kern-.025em b}\kern-.08em
                 T\kern-.1667em\lower.7ex\hbox{E}\kern-.125emX}}}{}%
\def\AmS{{\protect\usefont{OMS}{cmsy}{m}{n}%
                A\kern-.1667em\lower.5ex\hbox{M}\kern-.125emS}}}{}%
\def\@@eqncr{\let\@tempa\relax
    \ifcase\@eqcnt \def\@tempa{& & &}\or \def\@tempa{& &}%
      \else \def\@tempa{&}\fi
     \@tempa
     \if@eqnsw
        \iftag@
           \@taggnum
        \else
           \@eqnnum\stepcounter{equation}%
        \fi
     \fi
     \global\tag@false
     \global\@eqnswtrue
     \global\@eqcnt\z@\cr}
\def\TCItag{\@ifnextchar*{\@TCItagstar}{\@TCItag}}
\def\@TCItag#1{%
    \global\tag@true
    \global\def\@taggnum{(#1)}}
\def\@TCItagstar*#1{%
    \global\tag@true
    \global\def\@taggnum{#1}}
\let\DOTSI\relax
\def\RIfM@{\relax\ifmmode}%
\def\FN@{\futurelet\next}%
\def\iint{\DOTSI\intno@\tw@\FN@\ints@}%
\def\iiint{\DOTSI\intno@\thr@@\FN@\ints@}%
\def\iiiint{\DOTSI\intno@4 \FN@\ints@}%
\def\idotsint{\DOTSI\intno@\z@\FN@\ints@}%
\def\ints@{\findlimits@\ints@@}%
\newif\iflimtoken@
\newif\iflimits@
\def\findlimits@{\limtoken@true\ifx\next\limits\limits@true
 \else\ifx\next\nolimits\limits@false\else
 \limtoken@false\ifx\ilimits@\nolimits\limits@false\else
 \ifinner\limits@false\else\limits@true\fi\fi\fi\fi}%
\def\multint@{\int\ifnum\intno@=\z@\intdots@                          
 \else\intkern@\fi                                                    
 \ifnum\intno@>\tw@\int\intkern@\fi                                   
 \ifnum\intno@>\thr@@\int\intkern@\fi                                 
 \int}
\def\multintlimits@{\intop\ifnum\intno@=\z@\intdots@\else\intkern@\fi
 \ifnum\intno@>\tw@\intop\intkern@\fi
 \ifnum\intno@>\thr@@\intop\intkern@\fi\intop}%
\def\intic@{%
    \mathchoice{\hskip.5em}{\hskip.4em}{\hskip.4em}{\hskip.4em}}%
\def\negintic@{\mathchoice
 {\hskip-.5em}{\hskip-.4em}{\hskip-.4em}{\hskip-.4em}}%
\def\ints@@{\iflimtoken@                                              
 \def\ints@@@{\iflimits@\negintic@
   \mathop{\intic@\multintlimits@}\limits                             
  \else\multint@\nolimits\fi                                          
  \eat@}
 \else                                                                
 \def\ints@@@{\iflimits@\negintic@
  \mathop{\intic@\multintlimits@}\limits\else
  \multint@\nolimits\fi}\fi\ints@@@}%
\def\intkern@{\mathchoice{\!\!\!}{\!\!}{\!\!}{\!\!}}%
\def\plaincdots@{\mathinner{\cdotp\cdotp\cdotp}}%
\def\intdots@{\mathchoice{\plaincdots@}%
 {{\cdotp}\mkern1.5mu{\cdotp}\mkern1.5mu{\cdotp}}%
 {{\cdotp}\mkern1mu{\cdotp}\mkern1mu{\cdotp}}%
 {{\cdotp}\mkern1mu{\cdotp}\mkern1mu{\cdotp}}}%
\def\RIfM@{\relax\protect\ifmmode}
\def\text{\RIfM@\expandafter\text@\else\expandafter\mbox\fi}
\let\nfss@text\text
\def\text@#1{\mathchoice
   {\textdef@\displaystyle\f@size{#1}}%
   {\textdef@\textstyle\tf@size{\firstchoice@false #1}}%
   {\textdef@\textstyle\sf@size{\firstchoice@false #1}}%
   {\textdef@\textstyle \ssf@size{\firstchoice@false #1}}%
   \glb@settings}
\def\textdef@#1#2#3{\hbox{{%
                    \everymath{#1}%
                    \let\f@size#2\selectfont
                    #3}}}
\newif\iffirstchoice@
\def\Let@{\relax\iffalse{\fi\let\\=\cr\iffalse}\fi}%
\def\vspace@{\def\vspace##1{\crcr\noalign{\vskip##1\relax}}}%
\def\multilimits@{\bgroup\vspace@\Let@
 \baselineskip\fontdimen10 \scriptfont\tw@
 \advance\baselineskip\fontdimen12 \scriptfont\tw@
 \lineskip\thr@@\fontdimen8 \scriptfont\thr@@
 \lineskiplimit\lineskip
 \vbox\bgroup\ialign\bgroup\hfil$\m@th\scriptstyle{##}$\hfil\crcr}%
\def\Sb{_\multilimits@}%
\def\endSb{\crcr\egroup\egroup\egroup}%
\def\Sp{^\multilimits@}%
\newdimen\ex@
\def\rightarrowfill@#1{$#1\m@th\mathord-\mkern-6mu\cleaders
 \hbox{$#1\mkern-2mu\mathord-\mkern-2mu$}\hfill
 \mkern-6mu\mathord\rightarrow$}%
\def\leftarrowfill@#1{$#1\m@th\mathord\leftarrow\mkern-6mu\cleaders
 \hbox{$#1\mkern-2mu\mathord-\mkern-2mu$}\hfill\mkern-6mu\mathord-$}%
\def\leftrightarrowfill@#1{$#1\m@th\mathord\leftarrow
\mkern-6mu\cleaders
 \hbox{$#1\mkern-2mu\mathord-\mkern-2mu$}\hfill
 \mkern-6mu\mathord\rightarrow$}%
\def\overrightarrow{\mathpalette\overrightarrow@}%
\def\overrightarrow@#1#2{\vbox{\ialign{##\crcr\rightarrowfill@#1\crcr
 \noalign{\kern-\ex@\nointerlineskip}$\m@th\hfil#1#2\hfil$\crcr}}}%
\def\overleftarrow{\mathpalette\overleftarrow@}%
\def\overleftarrow@#1#2{\vbox{\ialign{##\crcr\leftarrowfill@#1\crcr
 \noalign{\kern-\ex@\nointerlineskip}$\m@th\hfil#1#2\hfil$\crcr}}}%
\def\overleftrightarrow{\mathpalette\overleftrightarrow@}%
\def\overleftrightarrow@#1#2{\vbox{\ialign{##\crcr
   \leftrightarrowfill@#1\crcr
 \noalign{\kern-\ex@\nointerlineskip}$\m@th\hfil#1#2\hfil$\crcr}}}%
\def\underrightarrow{\mathpalette\underrightarrow@}%
\def\underrightarrow@#1#2{\vtop{\ialign{##\crcr$\m@th\hfil#1#2\hfil
  $\crcr\noalign{\nointerlineskip}\rightarrowfill@#1\crcr}}}%
\def\underleftarrow{\mathpalette\underleftarrow@}%
\def\underleftarrow@#1#2{\vtop{\ialign{##\crcr$\m@th\hfil#1#2\hfil
  $\crcr\noalign{\nointerlineskip}\leftarrowfill@#1\crcr}}}%
\def\underleftrightarrow{\mathpalette\underleftrightarrow@}%
\def\underleftrightarrow@#1#2{\vtop{\ialign{##\crcr$\m@th
  \hfil#1#2\hfil$\crcr
 \noalign{\nointerlineskip}\leftrightarrowfill@#1\crcr}}}%
\def\qopnamewl@#1{\mathop{\operator@font#1}\nlimits@}
\let\nlimits@\displaylimits
\def\setboxz@h{\setbox\z@\hbox}
\def\varlim@#1#2{\mathop{\vtop{\ialign{##\crcr
 \hfil$#1\m@th\operator@font lim$\hfil\crcr
 \noalign{\nointerlineskip}#2#1\crcr
 \noalign{\nointerlineskip\kern-\ex@}\crcr}}}}
 \def\rightarrowfill@#1{\m@th\setboxz@h{$#1-$}\ht\z@\z@
  $#1\copy\z@\mkern-6mu\cleaders
  \hbox{$#1\mkern-2mu\box\z@\mkern-2mu$}\hfill
  \mkern-6mu\mathord\rightarrow$}
\def\leftarrowfill@#1{\m@th\setboxz@h{$#1-$}\ht\z@\z@
  $#1\mathord\leftarrow\mkern-6mu\cleaders
  \hbox{$#1\mkern-2mu\copy\z@\mkern-2mu$}\hfill
  \mkern-6mu\box\z@$}
\def\projlim{\qopnamewl@{proj\,lim}}
\def\injlim{\qopnamewl@{inj\,lim}}
\def\varinjlim{\mathpalette\varlim@\rightarrowfill@}
\def\varprojlim{\mathpalette\varlim@\leftarrowfill@}
\def\varliminf{\mathpalette\varliminf@{}}
\def\varliminf@#1{\mathop{\underline{\vrule\@depth.2\ex@\@width\z@
   \hbox{$#1\m@th\operator@font lim$}}}}
\def\varlimsup{\mathpalette\varlimsup@{}}
\def\varlimsup@#1{\mathop{\overline
  {\hbox{$#1\m@th\operator@font lim$}}}}
\def\align{\@verbatim \frenchspacing\@vobeyspaces \@alignverbatim
You are using the "align" environment in a style in which it is not defined.}
\let\csname endalign*\endcsname =\endtrivlist
\def\alignat{\@verbatim \frenchspacing\@vobeyspaces \@alignatverbatim
You are using the "alignat" environment in a style in which it is not defined.}
\let\csname endalignat*\endcsname =\endtrivlist
\def\xalignat{\@verbatim \frenchspacing\@vobeyspaces \@xalignatverbatim
You are using the "xalignat" environment in a style in which it is not defined.}
\let\csname endxalignat*\endcsname =\endtrivlist
\def\gather{\@verbatim \frenchspacing\@vobeyspaces \@gatherverbatim
You are using the "gather" environment in a style in which it is not defined.}
\let\csname endgather*\endcsname =\endtrivlist
\def\multiline{\@verbatim \frenchspacing\@vobeyspaces \@multilineverbatim
You are using the "multiline" environment in a style in which it is not defined.}
\let\csname endmultiline*\endcsname =\endtrivlist
\def\arrax{\@verbatim \frenchspacing\@vobeyspaces \@arraxverbatim
You are using a type of "array" construct that is only allowed in AmS-LaTeX.}
\def\tabulax{\@verbatim \frenchspacing\@vobeyspaces \@tabulaxverbatim
You are using a type of "tabular" construct that is only allowed in AmS-LaTeX.}
\let\csname endarrax*\endcsname =\endtrivlist
\let\csname endtabulax*\endcsname =\endtrivlist
 \def\endequation{%
     \ifmmode\ifinner 
      \iftag@
        \addtocounter{equation}{-1} 
        $\hfil
           \displaywidth\linewidth\@taggnum\egroup \endtrivlist
        \global\tag@false
        \global\@ignoretrue   
      \else
        $\hfil
           \displaywidth\linewidth\@eqnnum\egroup \endtrivlist
        \global\tag@false
        \global\@ignoretrue 
      \fi
     \else   
      \iftag@
        \addtocounter{equation}{-1} 
        \eqno \hbox{\@taggnum}
        \global\tag@false%
        $$\global\@ignoretrue
      \else
        \eqno \hbox{\@eqnnum}
        $$\global\@ignoretrue
      \fi
     \fi\fi
 } 
 \newif\iftag@ \tag@false
 \def\TCItag{\@ifnextchar*{\@TCItagstar}{\@TCItag}}
 \def\@TCItag#1{%
     \global\tag@true
     \global\def\@taggnum{(#1)}}
 \def\@TCItagstar*#1{%
     \global\tag@true
     \global\def\@taggnum{#1}}
     \def\tag{\@ifnextchar*{\@tagstar}{\@tag}}
     \def\@tag#1{%
         \global\tag@true
         \global\def\@taggnum{(#1)}}
     \def\@tagstar*#1{%
         \global\tag@true
         \global\def\@taggnum{#1}}
\begin{document}
\title[Moser-Trudinger inequality revisited]{Aubin type almost sharp
Moser-Trudinger inequality revisited}
\author{Fengbo Hang}
\address{Courant Institute, New York University, 251 Mercer Street, New York
NY 10012}
\email{fengbo@cims.nyu.edu}

\begin{abstract}
We give a new proof of the almost sharp Moser-Trudinger inequality on
compact Riemannian manifolds based on the sharp Moser inequality on
Euclidean spaces. In particular we can lower the smoothness requirement of
the metric and apply the same approach to higher order Sobolev spaces and
manifolds with boundary under several boundary conditions.
\end{abstract}

\maketitle

\section{Introduction\label{sec1}}

Let $n\in \mathbb{N}$, $n\geq 2$. For $1<p<n$, the classical Sobolev
inequality states that there exists a positive constant $c\left( n,p\right) $
with%
\begin{equation}
\left\Vert \varphi \right\Vert _{L^{\frac{np}{n-p}}}\leq c\left( n,p\right)
\left\Vert \nabla \varphi \right\Vert _{L^{p}}  \label{eq1.1}
\end{equation}%
for any $\varphi \in C_{c}^{\infty }\left( \mathbb{R}^{n}\right) $. We
denote $S_{n,p}$ as the smallest possible choice for the constant $c\left(
n,p\right) $ i.e.%
\begin{equation}
S_{n,p}=\sup_{\varphi \in C_{c}^{\infty }\left( \mathbb{R}^{n}\right)
\backslash \left\{ 0\right\} }\frac{\left\Vert \varphi \right\Vert _{L^{%
\frac{np}{n-p}}}}{\left\Vert \nabla \varphi \right\Vert _{L^{p}}}.
\label{eq1.2}
\end{equation}%
Let $\left( M^{n},g\right) $ be a smooth compact Riemannian manifold of
dimension $n$ and $1<p<n$. Aubin \cite{Au1} showed that for any $\varepsilon
>0$, we have%
\begin{equation}
\left\Vert u\right\Vert _{L^{\frac{np}{n-p}}\left( M\right) }^{p}\leq \left(
S_{n,p}^{p}+\varepsilon \right) \left\Vert \nabla u\right\Vert _{L^{p}\left(
M\right) }^{p}+c\left( \varepsilon \right) \left\Vert u\right\Vert
_{L^{p}\left( M\right) }^{p}  \label{eq1.3}
\end{equation}%
for $u\in W^{1,p}\left( M\right) $. We call (\ref{eq1.3}) Aubin's almost
sharp Sobolev inequality. Aubin's almost sharp Sobolev inequality and its
closely related concentration compactness principle play fundamental role in
the study of semilinear equations (see \cite{He, Ln1,Ln2}).

On the other hand, let $B_{R}=B_{R}^{n}\subset \mathbb{R}^{n}$ be the open
ball with origin as center and $R$ as radius. In \cite{M}, it is shown that
for any $u\in W_{0}^{1,n}\left( B_{R}\right) \backslash \left\{ 0\right\} $,%
\begin{equation}
\int_{B_{R}}\exp \left( a_{n}\frac{\left\vert u\right\vert ^{\frac{n}{n-1}}}{%
\left\Vert \nabla u\right\Vert _{L^{n}\left( B_{R}\right) }^{\frac{n}{n-1}}}%
\right) dx\leq c\left( n\right) R^{n}.  \label{eq1.4}
\end{equation}%
Here%
\begin{equation}
a_{n}=n\left\vert \mathbb{S}^{n-1}\right\vert ^{\frac{1}{n-1}}.
\label{eq1.5}
\end{equation}%
$\left\vert \mathbb{S}^{n-1}\right\vert $ is the volume of $\mathbb{S}^{n-1}$
under the standard metric. Note that $a_{n}$ is sharp in the sense that if $%
a>0$ s.t.%
\begin{equation*}
\sup_{u\in W_{0}^{1,n}\left( B_{R}\right) \backslash \left\{ 0\right\}
}\int_{B_{R}}\exp \left( a\frac{\left\vert u\right\vert ^{\frac{n}{n-1}}}{%
\left\Vert \nabla u\right\Vert _{L^{n}\left( B_{R}\right) }^{\frac{n}{n-1}}}%
\right) dx<\infty \text{,}
\end{equation*}%
then we have $a\leq a_{n}$. For similar inequalities on general smooth
compact Riemannian manifolds, in \cite{F}, it is shown that if $\left(
M^{n},g\right) $ is a smooth compact Riemannian manifold, then for any $u\in
W^{1,n}\left( M\right) \backslash \left\{ 0\right\} $ with $\int_{M}ud\mu =0$
(here $\mu $ is the measure associated with $g$),%
\begin{equation}
\int_{M}\exp \left( a_{n}\frac{\left\vert u\right\vert ^{\frac{n}{n-1}}}{%
\left\Vert \nabla u\right\Vert _{L^{n}}^{\frac{n}{n-1}}}\right) d\mu \leq
c\left( M,g\right) .  \label{eq1.6}
\end{equation}%
The Moser-Trudinger inequality (\ref{eq1.6}) and its analogy are closely
related to spectral geometry through the classical Polyakov formula, Gauss
curvature equation and Q curvature equations (see for example \cite%
{Au2,CY1,CY2,M,OPS} and the references therein). A direct consequence of (%
\ref{eq1.6}) is for $u\in W^{1,n}\left( M\right) \backslash \left\{
0\right\} $ with $\int_{M}ud\mu =0$ and $\varepsilon >0$ small,%
\begin{equation}
\int_{M}\exp \left( \left( a_{n}-\varepsilon \right) \frac{\left\vert
u\right\vert ^{\frac{n}{n-1}}}{\left\Vert \nabla u\right\Vert _{L^{n}}^{%
\frac{n}{n-1}}}\right) d\mu \leq c\left( \varepsilon \right) <\infty .
\label{eq1.7}
\end{equation}%
We call (\ref{eq1.7}) as Aubin type almost sharp Moser-Trudinger inequality.
It is worth pointing out that for most applications, almost sharp
Moser-Trudinger inequality is sufficient (using suitable approximation
process when necessary, see for example Lemma \ref{lem3.2} and the
discussion after it). On the other hand, one can pass from (\ref{eq1.7}) to (%
\ref{eq1.6}) by blow-up analysis (see \cite{Li1, Li2}).

To continue we observe that Aubin's almost sharp Sobolev inequality follows
from (\ref{eq1.2}) by a smart but elementary application of decomposition of
unit (see \cite{Au1}). On the other hand, such passage from (\ref{eq1.4}) to
(\ref{eq1.7}) is missing. Fontana's proof for (\ref{eq1.6}) uses potential
theory approach by Adams \cite{A} and requires accurate estimate of the
Green's function. This method does not work well for functions satisfying
suitable boundary conditions (see \cite{Y2}). The main aim of this note is
to provide a direct and elementary method to deduce (\ref{eq1.7}) from (\ref%
{eq1.4}). Our approach is motivated from recent progress in concentration
compactness principle in critical dimension in \cite{CH,H}, where Aubin's
Moser-Trudinger-Onofri inequality on $\mathbb{S}^{n}$ in \cite{Au2} is
generalized to higher order moments cases. The sequence of inequalities are
motivated from similar inequalities on $\mathbb{S}^{1}$ (see \cite{GrS,OPS,W}%
). The key point in \cite{CH,H}, which is different from \cite{CCH,Ln1,Ln2},
is recognizing the importance of the value of defect measure at a single
point. This point of view will be crucial here as well (see Proposition \ref%
{prop1.1} below). An interesting gain of our approach is we only need the
metric to be continuous. Due to the elementary nature of our method, we can
easily adapt it to higher order Sobolev spaces and functions satisfying
various boundary conditions.

In the remaining part of this section, we will prove Theorem \ref{thm1.1},
which covers (\ref{eq1.7}). In Section \ref{sec2} we will adapt the approach
to functions on manifolds with nonempty boundary, either under Dirichlet
condition or no boundary condition at all. In Section \ref{sec3}, we prove
similar results for second order Sobolev spaces under various boundary
conditions and discuss its applications to Q curvature equations in
dimension 4. In Section \ref{sec4}, we briefly describe what can be done for
higher order Sobolev spaces.

The first fact is a qualitative property of Sobolev functions following from
(\ref{eq1.4}). It should be compared to \cite[Lemma 2.1]{CH} and \cite[Lemma
2.1]{H}.

\begin{lemma}
\label{lem1.1}Let $u\in W^{1,n}\left( B_{R}^{n}\right) $ and $a>0$, then%
\begin{equation}
\int_{B_{R}}e^{a\left\vert u\right\vert ^{\frac{n}{n-1}}}dx<\infty .
\label{eq1.8}
\end{equation}
\end{lemma}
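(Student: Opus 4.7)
The inequality (\ref{eq1.4}) controls $e^{a_n |u|^{n/(n-1)}/\|\nabla u\|_{L^n}^{n/(n-1)}}$ for functions $u$ in $W_0^{1,n}(B_R)$, with a \emph{fixed} sharp constant $a_n$. To deduce (\ref{eq1.8}) for an arbitrary $a>0$ and an arbitrary $u\in W^{1,n}(B_R)$ we must (i) get rid of the boundary condition, and (ii) get the constant in front of $|u|^{n/(n-1)}$ below the sharp threshold. Both are obtained by splitting $u$ into a smooth piece plus a remainder of small Dirichlet energy.

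Step 1: approximation. By density of $C^\infty(\bar{B}_R)$ in $W^{1,n}(B_R)$, pick $v\in C^\infty(\bar{B}_R)$ with $\|\nabla(u-v)\|_{L^n(B_R)}$ as small as we wish, and set $w=u-v$. Since $v$ is continuous on the closure, $\|v\|_{L^\infty(B_R)}<\infty$, so
\[
e^{a(1+\varepsilon)|v|^{\frac{n}{n-1}}}\leq C_0
\]
for a constant $C_0=C_0(a,\varepsilon,v)$.

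Step 2: elementary inequality. For every $\varepsilon>0$ there exists $C_\varepsilon>0$ such that $(x+y)^{\frac{n}{n-1}}\leq (1+\varepsilon)x^{\frac{n}{n-1}}+C_\varepsilon y^{\frac{n}{n-1}}$ for all $x,y\geq 0$. Applying this to $x=|v|$, $y=|w|$ yields
\[
e^{a|u|^{\frac{n}{n-1}}}\leq e^{a(1+\varepsilon)|v|^{\frac{n}{n-1}}}\cdot e^{aC_\varepsilon|w|^{\frac{n}{n-1}}}\leq C_0\, e^{aC_\varepsilon|w|^{\frac{n}{n-1}}}.
\]

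Step 3: reducing the remainder to the Moser regime. Extend $w$ to a function $\widetilde{w}\in W_0^{1,n}(B_{2R})$ via a standard extension followed by a smooth cutoff, with $\|\nabla\widetilde{w}\|_{L^n(B_{2R})}\leq C(n,R)\,\|w\|_{W^{1,n}(B_R)}$. Since we are free to prescribe $\|w\|_{W^{1,n}(B_R)}$ in Step 1, choose it so small that
\[
aC_\varepsilon\,\|\nabla\widetilde{w}\|_{L^n(B_{2R})}^{\frac{n}{n-1}}\leq a_n.
\]
Then on $B_R$ we have $w=\widetilde{w}$ and
\[
e^{aC_\varepsilon|w|^{\frac{n}{n-1}}}\leq \exp\!\left(a_n\,\frac{|\widetilde{w}|^{\frac{n}{n-1}}}{\|\nabla\widetilde{w}\|_{L^n(B_{2R})}^{\frac{n}{n-1}}}\right).
\]
Integrating over $B_R$ and invoking (\ref{eq1.4}) on $B_{2R}$ produces a finite bound, which combined with the $C_0$ from Step 1 yields (\ref{eq1.8}).

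The only delicate point is Step 3: one must secure a $W^{1,n}$--bounded extension to a larger ball (so the Moser inequality with zero boundary values is applicable) while still keeping the exponent below the sharp value $a_n$. Everything else is soft; in particular, no continuity or sharpness of the metric or domain is used beyond what is inherent to $B_R^n$.
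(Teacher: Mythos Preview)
Your proof is correct and follows essentially the same strategy as the paper: split $u$ into a bounded smooth piece $v$ plus a remainder $w$ of small Sobolev norm, then apply Moser's inequality (\ref{eq1.4}) to (an extension of) $w$. The paper organizes this in two stages---first treating $u\in W_0^{1,n}(B_R)$ (so that $w=u-v$ already lies in $W_0^{1,n}(B_R)$ and no extension is needed), then reducing the general case by extending $u$ itself to $W_0^{1,n}(B_{2R})$---whereas you work directly with general $u$ and extend the remainder $w$; just note that in Step~1 you should take $v$ close to $u$ in the full $W^{1,n}$ norm (as you tacitly use in Step~3), not merely $\|\nabla(u-v)\|_{L^n}$ small.
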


\begin{proof}
We first treat the case $u\in W_{0}^{1,n}\left( B_{R}\right) $. If $u$ is
bounded the claim is clearly true. We assume $u$ is unbounded. For $%
\varepsilon >0$, a tiny number to be determined, we can find $v\in
C_{c}^{\infty }\left( B_{R}\right) $ such that%
\begin{equation*}
\left\Vert \nabla \left( u-v\right) \right\Vert _{L^{n}}<\varepsilon .
\end{equation*}%
Let $w=u-v$, then%
\begin{equation*}
\left\vert u\right\vert =\left\vert v+w\right\vert \leq \left\Vert
v\right\Vert _{L^{\infty }}+\left\vert w\right\vert .
\end{equation*}%
Hence%
\begin{equation*}
\left\vert u\right\vert ^{\frac{n}{n-1}}\leq 2^{\frac{1}{n-1}}\left\Vert
v\right\Vert _{L^{\infty }}^{\frac{n}{n-1}}+2^{\frac{1}{n-1}}\left\vert
w\right\vert ^{\frac{n}{n-1}}.
\end{equation*}%
It follows that%
\begin{equation*}
e^{a\left\vert u\right\vert ^{\frac{n}{n-1}}}\leq e^{2^{\frac{1}{n-1}%
}a\left\Vert v\right\Vert _{L^{\infty }}^{\frac{n}{n-1}}}e^{2^{\frac{1}{n-1}%
}a\left\vert w\right\vert ^{\frac{n}{n-1}}}\leq e^{2^{\frac{1}{n-1}%
}a\left\Vert v\right\Vert _{L^{\infty }}^{\frac{n}{n-1}}}e^{a_{n}\frac{%
\left\vert w\right\vert ^{\frac{n}{n-1}}}{\left\Vert \nabla w\right\Vert
_{L^{n}}^{\frac{n}{n-1}}}}
\end{equation*}%
if $\varepsilon $ is small enough. Using (\ref{eq1.4}) we see%
\begin{equation*}
\int_{B_{R}}e^{a\left\vert u\right\vert ^{\frac{n}{n-1}}}dx\leq c\left(
n\right) e^{2^{\frac{1}{n-1}}a\left\Vert v\right\Vert _{L^{\infty }}^{\frac{n%
}{n-1}}}R^{n}<\infty .
\end{equation*}

In general, if $u\in W^{1,n}\left( B_{R}\right) $, then we can find $%
\widetilde{u}\in W_{0}^{1,n}\left( B_{2R}\right) $ such that $\left. 
\widetilde{u}\right\vert _{B_{R}}=u$. Hence%
\begin{equation*}
\int_{B_{R}}e^{a\left\vert u\right\vert ^{\frac{n}{n-1}}}dx\leq
\int_{B_{2R}}e^{a\left\vert \widetilde{u}\right\vert ^{\frac{n}{n-1}%
}}dx<\infty
\end{equation*}%
by the previous discussion.
\end{proof}

Next we localize \cite[Lemma 2.2]{CH} and \cite[Proposition 2.1]{H}.

\begin{proposition}
\label{prop1.1}Let $0<R\leq 1$, $g$ be a continuous Riemannian metric on $%
\overline{B_{R}^{n}}$. Assume $u_{i}\in W^{1,n}\left( B_{R}\right) $, $%
u_{i}\rightharpoonup u$ weakly in $W^{1,n}\left( B_{R}\right) $ and%
\begin{equation*}
\left\vert \nabla _{g}u_{i}\right\vert _{g}^{n}d\mu \rightarrow \left\vert
\nabla _{g}u\right\vert _{g}^{n}d\mu +\sigma \text{ as measure on }B_{R}%
\text{.}
\end{equation*}%
If $0<p<\sigma \left( \left\{ 0\right\} \right) ^{-\frac{1}{n-1}}$, then for
some $r>0$,%
\begin{equation}
\sup_{i}\int_{B_{r}}e^{a_{n}p\left\vert u_{i}\right\vert ^{\frac{n}{n-1}%
}}d\mu <\infty .  \label{eq1.9}
\end{equation}%
Here%
\begin{equation}
a_{n}=n\left\vert \mathbb{S}^{n-1}\right\vert ^{\frac{1}{n-1}}.
\label{eq1.10}
\end{equation}
\end{proposition}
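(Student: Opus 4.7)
The plan is to localize near $0$, decompose $u_i = u + v_i$ with $v_i := u_i - u \rightharpoonup 0$ weakly in $W^{1,n}(B_R)$, and apply the sharp inequality~(\ref{eq1.4}) to a suitable cutoff of $v_i$. The key point is that on a small ball around $0$ the $L^n$ norm of the gradient of the cutoff is governed by $\sigma(\{0\})$, whose $1/(n-1)$-th power is by hypothesis strictly less than $1/p$; this gives the margin needed after a H\"older splitting of the exponential.

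After a linear change of coordinates I may assume $g_{ij}(0) = \delta_{ij}$. By continuity of $g$ there exists $r_0 > 0$ such that on $\overline{B_{r_0}}$ both the Riemannian norm on tangent vectors and the volume form are within a factor $1+\varepsilon'$ of their Euclidean counterparts. Pick $r > 0$ with $2r < r_0$, $\sigma(\partial B_{2r}) = 0$ (valid for all but countably many $r$), and $\int_{\overline{B_{2r}}} |\nabla_g u|_g^n\, d\mu$ small; fix $\eta \in C_c^\infty(B_{2r})$ with $\eta \equiv 1$ on $B_r$, $0 \le \eta \le 1$, and $|\nabla \eta| \le C/r$. Set $w_i := \eta v_i \in W_0^{1,n}(B_R)$.

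The core estimate is
\[
\limsup_{i \to \infty} \int_{B_R} |\nabla w_i|^n\, dx \le (1 + \tau)\, \sigma(\{0\}),
\]
with $\tau$ arbitrarily small through the parameters. Two applications of the elementary bound $|a+b|^n \le (1+\varepsilon)|a|^n + C_\varepsilon |b|^n$ first let me pass from $|\nabla w_i|^n$ to $\eta^n |\nabla v_i|^n$ plus an $L^n$ error involving $v_i \nabla \eta$, and then from $\eta^n |\nabla v_i|^n$ to $\eta^n |\nabla u_i|^n$ plus a $|\nabla u|^n$ error. In the limit the compact Sobolev embedding makes the $v_i \nabla \eta$ error vanish, the weak convergence of measures sends $\int \eta^n |\nabla_g u_i|_g^n\, d\mu$ to $\int \eta^n\, (|\nabla_g u|_g^n\, d\mu + d\sigma)$, and the remaining pieces are bounded by $\int_{\overline{B_{2r}}} |\nabla_g u|_g^n\, d\mu + \sigma(\overline{B_{2r}})$, which has already been arranged to be close to $\sigma(\{0\})$.

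With the gradient bound in hand, the finishing move is H\"older. On $B_r$ the decomposition $u_i = w_i + u$ combined with $|a+b|^{n/(n-1)} \le (1+\delta)|a|^{n/(n-1)} + C_\delta |b|^{n/(n-1)}$ yields
\[
\int_{B_r} e^{a_n p |u_i|^{n/(n-1)}}\, d\mu \le \Bigl(\int_{B_R} e^{q a_n p (1+\delta)|w_i|^{n/(n-1)}}\, d\mu\Bigr)^{1/q} \Bigl(\int_{B_r} e^{q' a_n p C_\delta |u|^{n/(n-1)}}\, d\mu\Bigr)^{1/q'},
\]
with $1/q + 1/q' = 1$ and $q > 1$. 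The second factor is finite by Lemma~\ref{lem1.1}, and the first is bounded via~(\ref{eq1.4}) provided $q\, p\, (1+\delta)(1+\varepsilon')\, \|\nabla w_i\|_{L^n(B_R)}^{n/(n-1)} \le 1$, where $\varepsilon'$ absorbs the metric distortion. Since $p\, \sigma(\{0\})^{1/(n-1)} < 1$, one can choose $q$ slightly above $1$ and $\delta, \varepsilon', \tau$ small enough to make this strict for all sufficiently large $i$; the main obstacle is precisely the bookkeeping of these small parameters so that the critical inequality survives all the distortion and error terms simultaneously.
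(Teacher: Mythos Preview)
Your proposal is correct and follows essentially the same approach as the paper: decompose $u_i = u + v_i$, cut off $v_i$ near the origin, use the defect-measure hypothesis together with the closeness of $g$ to the Euclidean metric to bound $\limsup_i \|\nabla(\eta v_i)\|_{L^n}^n$ by a quantity just above $\sigma(\{0\})$, then finish via the pointwise splitting of $|u_i|^{n/(n-1)}$, H\"older's inequality, Moser's sharp inequality~(\ref{eq1.4}) for the $v_i$-part, and Lemma~\ref{lem1.1} for the $u$-part. The only cosmetic difference is that the paper introduces a single intermediate exponent $p_1\in (p,\sigma(\{0\})^{-1/(n-1)})$ instead of your several small parameters $q,\delta,\varepsilon',\tau$, which streamlines the bookkeeping; also, your condition $\sigma(\partial B_{2r})=0$ is unnecessary since the cutoff $\eta$ is continuous with compact support, so weak-$*$ convergence of measures already gives the needed limit.
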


\begin{remark}
\label{rmk1.1}\cite[Lemma 2.2]{CH} and \cite[Proposition 2.1]{H} follows
from Proposition \ref{prop1.1}. Hence to derive the main results in \cite%
{CH,H}, we may use Moser's sharp inequality on Euclidean spaces instead of
Fontana's Moser-Trudinger inequality (\ref{eq1.6}).
\end{remark}

To prove Proposition \ref{prop1.1}, we first observe that by a linear
changing of variable and shrinking $R$ if necessary we can assume $%
g=g_{ij}dx_{i}dx_{j}$ with $g_{ij}\left( 0\right) =\delta _{ij}$. Fix $%
p_{1}\in \left( p,\sigma \left( \left\{ 0\right\} \right) ^{-\frac{1}{n-1}%
}\right) $, then%
\begin{equation}
\sigma \left( \left\{ 0\right\} \right) <\frac{1}{p_{1}^{n-1}}.
\label{eq1.11}
\end{equation}%
We can find $\varepsilon >0$ such that%
\begin{equation}
\left( 1+\varepsilon \right) \sigma \left( \left\{ 0\right\} \right) <\frac{1%
}{p_{1}^{n-1}}  \label{eq1.12}
\end{equation}%
and%
\begin{equation}
\left( 1+\varepsilon \right) p<p_{1}.  \label{eq1.13}
\end{equation}%
Let $v_{i}=u_{i}-u$, then $v_{i}\rightharpoonup 0$ weakly in $W^{1,n}\left(
B_{R}\right) $, $v_{i}\rightarrow 0$ in $L^{n}\left( B_{R}\right) $. To
continue, we note that on $B_{R}$ we have two Riemannian metrics: The
standard Euclidean metric and $g$. For a function $f$ on $B_{R}$, we write%
\begin{equation}
\left\Vert f\right\Vert _{L^{n}}=\left\Vert f\right\Vert _{L^{n}\left(
B_{R}\right) }=\left\Vert f\right\Vert _{L^{n}\left( B_{R},dx\right) },
\label{eq1.14}
\end{equation}%
here $dx$ is the standard Lebesgue measure. This should be compared with $%
\left\Vert f\right\Vert _{L^{n}\left( B_{R},d\mu \right) }$. We write $%
\nabla f$ as the usual gradient of $f$ i.e. the gradient with respect to
Euclidean metric and $\nabla _{g}f$ as the gradient of $f$ with respect to
metric $g$.

Let $0<R_{1}<R$ be a small number to be determined. For any $\varphi \in
C_{c}^{\infty }\left( B_{R_{1}}\right) $, we have%
\begin{eqnarray*}
&&\left\Vert \nabla \left( \varphi v_{i}\right) \right\Vert _{L^{n}\left(
B_{R}\right) }^{n} \\
&\leq &\left( \left\Vert \varphi \nabla v_{i}\right\Vert _{L^{n}}+\left\Vert
v_{i}\nabla \varphi \right\Vert _{L^{n}}\right) ^{n} \\
&\leq &\left( \left\Vert \varphi \nabla u_{i}\right\Vert _{L^{n}}+\left\Vert
\varphi \nabla u\right\Vert _{L^{n}}+\left\Vert v_{i}\nabla \varphi
\right\Vert _{L^{n}}\right) ^{n} \\
&\leq &\left( 1+\frac{\varepsilon }{2}\right) \left\Vert \varphi \nabla
u_{i}\right\Vert _{L^{n}}^{n}+c\left( \varepsilon \right) \left\Vert \varphi
\nabla u\right\Vert _{L^{n}}^{n}+c\left( \varepsilon \right) \left\Vert
v_{i}\nabla \varphi \right\Vert _{L^{n}}^{n} \\
&\leq &\left( 1+\varepsilon \right) \left\Vert \varphi \nabla
_{g}u_{i}\right\Vert _{L^{n}\left( B_{R},d\mu \right) }^{n}+c\left(
\varepsilon \right) \left\Vert \varphi \nabla u\right\Vert
_{L^{n}}^{n}+c\left( \varepsilon \right) \left\Vert v_{i}\nabla \varphi
\right\Vert _{L^{n}}^{n}.
\end{eqnarray*}%
Here we have used the continuity of $g_{ij}$, $g_{ij}\left( 0\right) =\delta
_{ij}$ and the smallness of $R_{1}$. Hence%
\begin{eqnarray*}
&&\lim \sup_{i\rightarrow \infty }\left\Vert \nabla \left( \varphi
v_{i}\right) \right\Vert _{L^{n}\left( B_{R}\right) }^{n} \\
&\leq &\left( 1+\varepsilon \right) \int_{B_{R}}\left\vert \varphi
\right\vert ^{n}d\sigma +\left( 1+\varepsilon \right) \int_{B_{R}}\left\vert
\varphi \right\vert ^{n}\left\vert \nabla _{g}u\right\vert _{g}^{n}d\mu
+c\left( \varepsilon \right) \int_{B_{R}}\left\vert \varphi \right\vert
^{n}\left\vert \nabla u\right\vert ^{n}dx \\
&\leq &\left( 1+\varepsilon \right) \int_{B_{R}}\left\vert \varphi
\right\vert ^{n}d\sigma +c\left( \varepsilon \right) \int_{B_{R}}\left\vert
\varphi \right\vert ^{n}\left\vert \nabla u\right\vert ^{n}dx.
\end{eqnarray*}%
Since $\left( 1+\varepsilon \right) \sigma \left( \left\{ 0\right\} \right) <%
\frac{1}{p_{1}^{n-1}}$, we can choose $\varphi \in C_{c}^{\infty }\left(
B_{R_{1}}\right) $ such that $\left. \varphi \right\vert _{B_{r}}=1$ for
some $r>0$ and%
\begin{equation}
\left( 1+\varepsilon \right) \int_{B_{R}}\left\vert \varphi \right\vert
^{n}d\sigma +c\left( \varepsilon \right) \int_{B_{R}}\left\vert \varphi
\right\vert ^{n}\left\vert \nabla u\right\vert ^{n}dx<\frac{1}{p_{1}^{n-1}}.
\label{eq1.15}
\end{equation}%
Hence for $i$ large enough, we have%
\begin{equation}
\left\Vert \nabla \left( \varphi v_{i}\right) \right\Vert _{L^{n}\left(
B_{R}\right) }^{n}<\frac{1}{p_{1}^{n-1}}.  \label{eq1.16}
\end{equation}%
This implies%
\begin{equation}
\left\Vert \nabla \left( \varphi v_{i}\right) \right\Vert _{L^{n}\left(
B_{R}\right) }^{\frac{n}{n-1}}<\frac{1}{p_{1}}.  \label{eq1.17}
\end{equation}%
On the other hand,%
\begin{equation*}
\left\vert u_{i}\right\vert ^{\frac{n}{n-1}}\leq \left( \left\vert
v_{i}\right\vert +\left\vert u\right\vert \right) ^{\frac{n}{n-1}}\leq
\left( 1+\varepsilon \right) \left\vert v_{i}\right\vert ^{\frac{n}{n-1}%
}+c\left( \varepsilon \right) \left\vert u\right\vert ^{\frac{n}{n-1}},
\end{equation*}%
hence%
\begin{equation*}
e^{a_{n}\left\vert u_{i}\right\vert ^{\frac{n}{n-1}}}\leq e^{\left(
1+\varepsilon \right) a_{n}\left\vert v_{i}\right\vert ^{\frac{n}{n-1}%
}}e^{c\left( \varepsilon \right) \left\vert u\right\vert ^{\frac{n}{n-1}}}.
\end{equation*}%
It follows from (\ref{eq1.4}) that%
\begin{equation*}
\int_{B_{r}}e^{a_{n}p_{1}\left\vert v_{i}\right\vert ^{\frac{n}{n-1}}}d\mu
\leq c\int_{B_{R}}e^{a_{n}p_{1}\left\vert \varphi v_{i}\right\vert ^{\frac{n%
}{n-1}}}dx\leq c\int_{B_{R}}e^{a_{n}\frac{\left\vert \varphi
v_{i}\right\vert ^{\frac{n}{n-1}}}{\left\Vert \nabla \left( \varphi
v_{i}\right) \right\Vert _{L^{n}}^{\frac{n}{n-1}}}}dx\leq c.
\end{equation*}%
Using $p<\frac{p_{1}}{1+\varepsilon }$ and Lemma \ref{lem1.1}, it follows
from Holder's inequality that $e^{a_{n}\left\vert u_{i}\right\vert ^{\frac{n%
}{n-1}}}$ is bounded in $L^{p}\left( B_{r},d\mu \right) $. This finishes the
proof of Proposition \ref{prop1.1}.

\begin{theorem}
\label{thm1.1}Let $\left( M^{n},g\right) $ be a $C^{1}$ compact manifold
with a continuous Riemannian metric $g$. If $E\subset M$ with $\mu \left(
E\right) \geq \delta >0$, $u\in W^{1,n}\left( M\right) \backslash \left\{
0\right\} $ with $u_{E}=0$ (here $u_{E}=\frac{1}{\mu \left( E\right) }%
\int_{E}ud\mu $), then for any $0<a<a_{n}=n\left\vert \mathbb{S}%
^{n-1}\right\vert ^{\frac{1}{n-1}}$,%
\begin{equation}
\int_{M}e^{a\frac{\left\vert u\right\vert ^{\frac{n}{n-1}}}{\left\Vert
\nabla u\right\Vert _{L^{n}}^{\frac{n}{n-1}}}}d\mu \leq c\left( a,\delta
\right) <\infty .  \label{eq1.18}
\end{equation}
\end{theorem}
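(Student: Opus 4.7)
The plan is to argue by contradiction, localize the problem around an arbitrary point via Proposition \ref{prop1.1}, and then cover $M$ by finitely many such neighborhoods. Assume the conclusion fails. By homogeneity of the exponent in $u$, I may choose a sequence $u_i\in W^{1,n}(M)$ with
$$(u_i)_E = 0,\qquad \|\nabla_g u_i\|_{L^n(M,d\mu)} = 1,\qquad \int_M e^{a|u_i|^{n/(n-1)}}\,d\mu \longrightarrow \infty.$$
The vanishing of the average on $E$ combined with $\mu(E)\ge \delta$ gives, via a Poincar\'e--Wirtinger inequality on $(M,g)$, the uniform bound $\|u_i\|_{W^{1,n}(M)}\le C(\delta)$. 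Passing to a subsequence, I may assume $u_i\rightharpoonup u$ weakly in $W^{1,n}$ and strongly in $L^n$, and
$$|\nabla_g u_i|_g^n\,d\mu \longrightarrow |\nabla_g u|_g^n\,d\mu + \sigma$$
weakly as Radon measures on $M$, where $\sigma\ge 0$ is finite with $\sigma(M)=1-\|\nabla_g u\|_{L^n}^n\le 1$.

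The subcriticality $a<a_n$ now enters through the elementary inequality
$$\sigma(\{x_0\}) \;\le\; \sigma(M) \;\le\; 1 \;<\; (a_n/a)^{n-1},\qquad \forall\, x_0\in M,$$
which is exactly the hypothesis of Proposition \ref{prop1.1} applied with $p:=a/a_n\in(0,1)$, since $\sigma(\{x_0\})<p^{-(n-1)}$. Fix $x_0\in M$. Using that $M$ is $C^1$, I pick a chart sending a neighborhood of $x_0$ diffeomorphically onto a ball $B_R^n\subset\mathbb{R}^n$ of radius $R\le 1$ with $x_0\mapsto 0$, and follow it by a linear change of coordinates so that the pushed-forward (still continuous) metric satisfies $g_{ij}(0)=\delta_{ij}$. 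Transporting $u_i$ and the defect measure $\sigma$ across this chart, Proposition \ref{prop1.1} furnishes a radius $r=r(x_0)>0$ and a neighborhood $V_{x_0}$ of $x_0$ with
$$\sup_i \int_{V_{x_0}} e^{a|u_i|^{n/(n-1)}}\,d\mu \;=\; \sup_i \int_{V_{x_0}} e^{a_n p |u_i|^{n/(n-1)}}\,d\mu \;<\; \infty.$$

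By compactness of $M$, finitely many such $V_{x_j}$ cover $M$; summing their contributions yields $\sup_i \int_M e^{a|u_i|^{n/(n-1)}}\,d\mu <\infty$, contradicting our choice of sequence and establishing \eqref{eq1.18}. The dependence $c(a,\delta)$ is inherited from the Poincar\'e constant (which depends on $\delta$) and from the local bounds of Proposition \ref{prop1.1} (which depend on the gap $a_n-a$).

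The main obstacle is the correct bookkeeping of the defect measure at a possible concentration point: one must guarantee strict inequality $\sigma(\{x_0\})<(a_n/a)^{n-1}$ at \emph{every} $x_0$, even when $\sigma$ is essentially a Dirac mass. The strict gap $a<a_n$ combined with the overall mass bound $\sigma(M)\le 1$ closes this gap automatically, without any blow-up analysis. The mere continuity of $g$ is not an obstruction, because Proposition \ref{prop1.1} was formulated precisely for a continuous metric; in particular no appeal to normal coordinates or Green's function estimates (as in the potential-theoretic approach of \cite{F}) is needed.
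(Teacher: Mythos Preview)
Your proof is correct and follows essentially the same route as the paper: contradiction, normalize $\|\nabla u_i\|_{L^n}=1$, Poincar\'e to bound $\|u_i\|_{W^{1,n}}$ in terms of $\delta$, extract a weak limit and defect measure with $\sigma(M)\le 1$, then apply Proposition~\ref{prop1.1} pointwise and finish by compactness. One small bookkeeping point: since the claimed constant $c(a,\delta)$ is uniform over all sets $E$ with $\mu(E)\ge\delta$, the negation should produce sequences $(E_i,u_i)$ with $\mu(E_i)\ge\delta$ and $u_{i,E_i}=0$ rather than a single fixed $E$; the paper does this, and your Poincar\'e step already yields the $\delta$-dependent bound needed, so the fix is immediate.
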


\begin{remark}
\label{rmk1.2}It is interesting that we can find a continuous Riemannian
metric on $M$ such that%
\begin{equation}
\sup_{\substack{ u\in W^{1,n}\left( M\right) \backslash \left\{ 0\right\} 
\\ u_{M}=0}}\int_{M}\exp \left( a_{n}\frac{\left\vert u\right\vert ^{\frac{n%
}{n-1}}}{\left\Vert \nabla u\right\Vert _{L^{n}\left( M\right) }^{\frac{n}{%
n-1}}}\right) d\mu =\infty .  \label{eq1.19}
\end{equation}%
Indeed let $\left( U,\phi \right) $ be a coordinate on $M$ such that $\phi
\left( U\right) =B_{2}$. For convenience we identify $U$ with $B_{2}$. Let%
\begin{equation}
\alpha \left( r\right) =\frac{1}{\sqrt{\log \frac{1}{r}+4}}\quad \text{for }%
0<r<1\text{,}  \label{eq1.20}
\end{equation}%
then $0\leq \alpha \leq \frac{1}{2}$ and%
\begin{equation}
\int_{0}^{1}\frac{\alpha \left( r\right) }{r}dr=\infty .  \label{eq1.21}
\end{equation}%
We can find a continuous Riemannian metric on $M$ such that on $B_{1}$, $%
g=g_{ij}dx_{i}dx_{j}$ with%
\begin{equation}
g_{ij}=\left( 1-\alpha \left( \left\vert x\right\vert \right) \right) ^{-%
\frac{2}{n}}\frac{x_{i}}{\left\vert x\right\vert }\frac{x_{j}}{\left\vert
x\right\vert }+\left( 1-\alpha \left( \left\vert x\right\vert \right)
\right) ^{\frac{2}{n\left( n-1\right) }}\left( \delta _{ij}-\frac{x_{i}}{%
\left\vert x\right\vert }\frac{x_{j}}{\left\vert x\right\vert }\right) .
\label{eq1.22}
\end{equation}%
Hence%
\begin{equation}
g^{ij}=\left( 1-\alpha \left( \left\vert x\right\vert \right) \right) ^{%
\frac{2}{n}}\frac{x_{i}}{\left\vert x\right\vert }\frac{x_{j}}{\left\vert
x\right\vert }+\left( 1-\alpha \left( \left\vert x\right\vert \right)
\right) ^{-\frac{2}{n\left( n-1\right) }}\left( \delta _{ij}-\frac{x_{i}}{%
\left\vert x\right\vert }\frac{x_{j}}{\left\vert x\right\vert }\right) .
\label{eq1.23}
\end{equation}%
For $0<\varepsilon <1$, we define%
\begin{equation*}
v_{\varepsilon }\left( x\right) =\left\{ 
\begin{tabular}{ll}
$\log \varepsilon ,$ & if $\left\vert x\right\vert \leq \varepsilon $; \\ 
$\log \left\vert x\right\vert ,$ & if $\varepsilon <\left\vert x\right\vert
<1$; \\ 
$0,$ & if $x\in M\backslash B_{1}$.%
\end{tabular}%
\right.
\end{equation*}%
Then%
\begin{eqnarray*}
\left\Vert \nabla v_{\varepsilon }\right\Vert _{L^{n}}^{n}
&=&\int_{\varepsilon <\left\vert x\right\vert <1}\left( g^{ij}\frac{x_{i}}{%
\left\vert x\right\vert ^{2}}\frac{x_{j}}{\left\vert x\right\vert ^{2}}%
\right) ^{\frac{n}{2}}dx \\
&=&\int_{\varepsilon <\left\vert x\right\vert <1}\frac{1-\alpha \left(
\left\vert x\right\vert \right) }{\left\vert x\right\vert ^{n}}dx \\
&=&\left\vert \mathbb{S}^{n-1}\right\vert \left( \log \frac{1}{\varepsilon }%
-\int_{\varepsilon }^{1}\frac{\alpha \left( r\right) }{r}dr\right) .
\end{eqnarray*}%
On the other hand, $v_{\varepsilon ,M}=O\left( 1\right) $ as $\varepsilon
\rightarrow 0^{+}$. Letting $u_{\varepsilon }=v_{\varepsilon
}-v_{\varepsilon ,M}$, we claim%
\begin{equation}
\lim_{\varepsilon \rightarrow 0^{+}}\int_{M}\exp \left( a_{n}\frac{%
\left\vert u_{\varepsilon }\right\vert ^{\frac{n}{n-1}}}{\left\Vert \nabla
u_{\varepsilon }\right\Vert _{L^{n}\left( M\right) }^{\frac{n}{n-1}}}\right)
d\mu =\infty .  \label{eq1.24}
\end{equation}%
Indeed,%
\begin{eqnarray*}
&&\int_{M}\exp \left( a_{n}\frac{\left\vert u_{\varepsilon }\right\vert ^{%
\frac{n}{n-1}}}{\left\Vert \nabla u_{\varepsilon }\right\Vert _{L^{n}\left(
M\right) }^{\frac{n}{n-1}}}\right) d\mu \\
&\geq &\int_{B_{\varepsilon }}\exp \left( a_{n}\frac{\left\vert \log
\varepsilon -v_{\varepsilon ,M}\right\vert ^{\frac{n}{n-1}}}{\left\Vert
\nabla v_{\varepsilon }\right\Vert _{L^{n}\left( M\right) }^{\frac{n}{n-1}}}%
\right) dx \\
&=&\left\vert B_{1}\right\vert \varepsilon ^{n}\exp \left( n\frac{\left\vert
\log \frac{1}{\varepsilon }+v_{\varepsilon ,M}\right\vert ^{\frac{n}{n-1}}}{%
\left( \log \frac{1}{\varepsilon }-\int_{\varepsilon }^{1}\frac{\alpha
\left( r\right) }{r}dr\right) ^{\frac{1}{n-1}}}\right) \\
&\geq &c\left( n\right) \exp \left[ n\log \frac{1}{\varepsilon }\cdot \left(
1-\frac{c_{1}}{\log \frac{1}{\varepsilon }}\right) \left( 1+\frac{1}{n-1}%
\frac{\int_{\varepsilon }^{1}\frac{\alpha \left( r\right) }{r}dr}{\log \frac{%
1}{\varepsilon }}\right) -n\log \frac{1}{\varepsilon }\right] \\
&\geq &c\left( n\right) \exp \left( \frac{n}{n-1}\int_{\varepsilon }^{1}%
\frac{\alpha \left( r\right) }{r}dr-c\right) \\
&\geq &c\exp \left( \frac{n}{n-1}\int_{\varepsilon }^{1}\frac{\alpha \left(
r\right) }{r}dr\right) .
\end{eqnarray*}%
This estimate together with (\ref{eq1.21}) implies (\ref{eq1.24}). (\ref%
{eq1.19}) follows.
\end{remark}

\begin{proof}[Proof of Theorem \protect\ref{thm1.1}]
For any $v\in W^{1,n}\left( M\right) $, we know%
\begin{equation*}
\left\Vert v-v_{M}\right\Vert _{L^{n}}\leq c\left\Vert \nabla v\right\Vert
_{L^{n}}.
\end{equation*}%
On the other hand,%
\begin{equation*}
\left\Vert v-v_{E}\right\Vert _{L^{n}}\leq \frac{c}{\mu \left( E\right) ^{%
\frac{1}{n}}}\left\Vert v\right\Vert _{L^{n}}\leq c\left( \delta \right)
\left\Vert v\right\Vert _{L^{n}}.
\end{equation*}%
Replacing $v$ by $v-v_{M}$, we see%
\begin{equation*}
\left\Vert v-v_{E}\right\Vert _{L^{n}}\leq c\left( \delta \right) \left\Vert
v-v_{M}\right\Vert _{L^{n}}\leq c\left( \delta \right) \left\Vert \nabla
v\right\Vert _{L^{n}}.
\end{equation*}

If (\ref{eq1.18}) is not true, then we can find a sequence $u_{i}\in
W^{1,n}\left( M\right) $, $E_{i}\subset M$ with $\mu \left( E_{i}\right)
\geq \delta $, $u_{i,E_{i}}=0$, $\left\Vert \nabla u_{i}\right\Vert
_{L^{n}}=1$ and%
\begin{equation*}
\int_{M}e^{a\left\vert u_{i}\right\vert ^{\frac{n}{n-1}}}d\mu \rightarrow
\infty
\end{equation*}%
as $i\rightarrow \infty $. Since%
\begin{equation*}
\left\Vert u_{i}\right\Vert _{L^{n}}=\left\Vert u_{i}-u_{i,E_{i}}\right\Vert
_{L^{n}}\leq c\left( \delta \right) \left\Vert \nabla u_{i}\right\Vert
_{L^{n}}=c\left( \delta \right) ,
\end{equation*}%
we see $u_{i}$ is bounded in $W^{1,n}\left( M\right) $. Hence after passing
to a subsequence we can find $u\in W^{1,n}\left( M\right) $ such that $%
u_{i}\rightharpoonup u$ weakly in $W^{1,n}\left( M\right) $ and a measure on 
$M$, $\sigma $ such that%
\begin{equation*}
\left\vert \nabla u_{i}\right\vert ^{n}d\mu \rightarrow \left\vert \nabla
u\right\vert ^{n}d\mu +\sigma
\end{equation*}%
as measure. Note that $\sigma \left( M\right) \leq 1$. For any $x\in M$,
since%
\begin{equation*}
0<\frac{a}{a_{n}}<\sigma \left( \left\{ x\right\} \right) ^{-\frac{1}{n-1}},
\end{equation*}%
it follows from Proposition \ref{prop1.1} that for some $r>0$, we have%
\begin{equation*}
\sup_{i}\int_{B_{r}\left( x\right) }e^{a\left\vert u_{i}\right\vert ^{\frac{n%
}{n-1}}}d\mu <\infty .
\end{equation*}%
A covering argument implies%
\begin{equation*}
\sup_{i}\int_{M}e^{a\left\vert u_{i}\right\vert ^{\frac{n}{n-1}}}d\mu
<\infty .
\end{equation*}%
This contradicts with the choice of $u_{i}$.
\end{proof}

The argument above is flexible and can be applied to other cases as well.

\begin{example}
\label{ex1.1}Let $M^{n}$ be a $C^{1}$ compact manifold with a $C^{0}$
Riemannian metric $g$. Denote%
\begin{equation}
\kappa \left( M,g\right) =\inf_{\substack{ u\in W^{1,n}\left( M\right)
\backslash \left\{ 0\right\}  \\ u_{M}=0}}\frac{\left\Vert \nabla
u\right\Vert _{L^{n}}}{\left\Vert u\right\Vert _{L^{n}}}.  \label{eq1.25}
\end{equation}%
It follows from Poincare inequality that $\kappa \left( M,g\right) $ is a
positive number. Assume $0\leq \kappa <\kappa \left( M,g\right) $, $%
0<a<a_{n} $, $u\in W^{1,n}\left( M\right) $ with $u_{M}=0$ and%
\begin{equation}
\left\Vert \nabla u\right\Vert _{L^{n}}^{n}-\kappa ^{n}\left\Vert
u\right\Vert _{L^{n}}^{n}\leq 1,  \label{eq1.26}
\end{equation}%
then we have%
\begin{equation}
\int_{M}e^{a\left\vert u\right\vert ^{\frac{n}{n-1}}}d\mu \leq c\left(
\kappa ,a\right) <\infty .  \label{eq1.27}
\end{equation}
\end{example}

\begin{proof}
If the claim is not true, then we can find a sequence $u_{i}\in
W^{1,n}\left( M\right) $ s.t. $u_{i,M}=0$, $\left\Vert \nabla
u_{i}\right\Vert _{L^{n}}^{n}-\kappa ^{n}\left\Vert u_{i}\right\Vert
_{L^{n}}^{n}\leq 1$ and $\int_{M}e^{a\left\vert u_{i}\right\vert ^{\frac{n}{%
n-1}}}d\mu \rightarrow \infty $ as $i\rightarrow \infty $. By the definition
of $\kappa \left( M,g\right) $, we see%
\begin{equation*}
\left( \kappa \left( M,g\right) ^{n}-\kappa ^{n}\right) \left\Vert
u_{i}\right\Vert _{L^{n}}^{n}\leq \left\Vert \nabla u_{i}\right\Vert
_{L^{n}}^{n}-\kappa ^{n}\left\Vert u_{i}\right\Vert _{L^{n}}^{n}\leq 1,
\end{equation*}%
hence%
\begin{equation*}
\left\Vert u_{i}\right\Vert _{L^{n}}^{n}\leq \frac{1}{\kappa \left(
M,g\right) ^{n}-\kappa ^{n}}.
\end{equation*}%
This together with $\left\Vert \nabla u_{i}\right\Vert _{L^{n}}^{n}-\kappa
^{n}\left\Vert u_{i}\right\Vert _{L^{n}}^{n}\leq 1$ implies%
\begin{equation*}
\left\Vert \nabla u_{i}\right\Vert _{L^{n}}^{n}\leq \frac{\kappa \left(
M,g\right) ^{n}}{\kappa \left( M,g\right) ^{n}-\kappa ^{n}}.
\end{equation*}%
In other words, $u_{i}$ is bounded in $W^{1,n}\left( M\right) $. After
passing to a subsequence, we can find $u\in W^{1,n}\left( M\right) $ s.t. $%
u_{i}\rightharpoonup u$ weakly in $W^{1,n}\left( M\right) $, $%
u_{i}\rightarrow u$ in $L^{n}\left( M\right) $ and%
\begin{equation*}
\left\vert \nabla u_{i}\right\vert ^{n}d\mu \rightarrow \left\vert \nabla
u\right\vert ^{n}d\mu +\sigma
\end{equation*}%
as measure. In particular, $u_{M}=0$. On the other hand,%
\begin{equation*}
\left\Vert \nabla u\right\Vert _{L^{n}}^{n}+\sigma \left( M\right) -\kappa
^{n}\left\Vert u\right\Vert _{L^{n}}^{n}\leq 1.
\end{equation*}%
Since $\left\Vert \nabla u\right\Vert _{L^{n}}^{n}-\kappa ^{n}\left\Vert
u\right\Vert _{L^{n}}^{n}\geq 0$, we see $\sigma \left( M\right) \leq 1$.
For any $x\in M$, since%
\begin{equation*}
0<\frac{a}{a_{n}}<\sigma \left( \left\{ x\right\} \right) ^{-\frac{1}{n-1}},
\end{equation*}%
it follows from Proposition \ref{prop1.1} that for some $r>0$, we have%
\begin{equation*}
\sup_{i}\int_{B_{r}\left( x\right) }e^{a\left\vert u_{i}\right\vert ^{\frac{n%
}{n-1}}}d\mu <\infty .
\end{equation*}%
This together with compactness of $M$ implies%
\begin{equation*}
\sup_{i}\int_{M}e^{a\left\vert u_{i}\right\vert ^{\frac{n}{n-1}}}d\mu
<\infty .
\end{equation*}%
It gives us a contradiction by the choice of $u_{i}$ at the beginning.
\end{proof}

\section{Functions on manifolds with nonempty boundary\label{sec2}}

In this section, we will study functions on manifolds with boundary under no
boundary conditions or Dirichlet boundary condition. At first we need to
discuss functions on half ball. Let $R>0$, we denote%
\begin{equation}
B_{R}^{+}=\left\{ x\in B_{R}:x=\left( x^{\prime },x_{n}\right) ,x^{\prime
}\in \mathbb{R}^{n-1},x_{n}\geq 0\right\} .  \label{eq2.1}
\end{equation}

\begin{lemma}
\label{lem2.1}Assume $u\in W^{1,n}\left( B_{R}^{+}\right) \backslash \left\{
0\right\} $ such that $u\left( x\right) =0$ for $\left\vert x\right\vert $
close to $R$, then%
\begin{equation}
\int_{B_{R}^{+}}e^{2^{-\frac{1}{n-1}}a_{n}\frac{\left\vert u\right\vert ^{%
\frac{n}{n-1}}}{\left\Vert \nabla u\right\Vert _{L^{n}\left(
B_{R}^{+}\right) }^{\frac{n}{n-1}}}}dx\leq c\left( n\right) R^{n}.
\label{eq2.2}
\end{equation}
\end{lemma}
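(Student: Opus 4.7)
The plan is to reduce to Moser's inequality (\ref{eq1.4}) on the full ball $B_R$ by even reflection across the hyperplane $\{x_n = 0\}$. Define
\begin{equation*}
\widetilde{u}(x', x_n) = u(x', |x_n|), \quad x = (x', x_n) \in B_R.
\end{equation*}
Since even reflection preserves $W^{1,n}$ regularity across a flat boundary, and since $u$ vanishes near $\partial B_R \cap \{x_n \geq 0\}$, the reflected function $\widetilde{u}$ lies in $W_0^{1,n}(B_R)$.

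Next I would compute the gradient norm. By symmetry,
\begin{equation*}
\|\nabla \widetilde{u}\|_{L^n(B_R)}^n = 2 \|\nabla u\|_{L^n(B_R^+)}^n,
\end{equation*}
which gives
\begin{equation*}
\|\nabla \widetilde{u}\|_{L^n(B_R)}^{\frac{n}{n-1}} = 2^{\frac{1}{n-1}} \|\nabla u\|_{L^n(B_R^+)}^{\frac{n}{n-1}}.
\end{equation*}
Applying Moser's inequality (\ref{eq1.4}) to $\widetilde{u}$ yields
\begin{equation*}
\int_{B_R} \exp\!\left( a_n \frac{|\widetilde{u}|^{\frac{n}{n-1}}}{\|\nabla \widetilde{u}\|_{L^n(B_R)}^{\frac{n}{n-1}}}\right) dx \leq c(n) R^n.
\end{equation*}
Substituting the identity for the gradient norm, the exponent becomes
\begin{equation*}
a_n \frac{|\widetilde{u}|^{\frac{n}{n-1}}}{2^{\frac{1}{n-1}} \|\nabla u\|_{L^n(B_R^+)}^{\frac{n}{n-1}}} = 2^{-\frac{1}{n-1}} a_n \frac{|\widetilde{u}|^{\frac{n}{n-1}}}{\|\nabla u\|_{L^n(B_R^+)}^{\frac{n}{n-1}}}.
\end{equation*}

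Finally, restricting the integral to the upper half $B_R^+$, where $\widetilde{u} = u$, gives (\ref{eq2.2}) directly. There is no real obstacle here; the only point requiring any care is verifying that the reflection lies in $W_0^{1,n}(B_R)$, which follows from the hypothesis that $u$ vanishes near the spherical part of $\partial B_R^+$ and the fact that even reflection is a bounded operation $W^{1,n}(B_R^+) \to W^{1,n}(B_R)$ (the tangential derivative extends evenly and the normal derivative extends as an odd function, both in $L^n$). The factor $2^{-1/(n-1)}$ is the price paid for the reflection and is sharp within this method.
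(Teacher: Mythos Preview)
Your proof is correct and is essentially identical to the paper's own argument: the paper also performs the even reflection $v(x',x_n)=u(x',|x_n|)$, notes that $\|\nabla v\|_{L^n(B_R)}^n = 2\|\nabla u\|_{L^n(B_R^+)}^n$, applies Moser's inequality (\ref{eq1.4}) to $v$, and restricts to $B_R^+$. Your additional remarks justifying $\widetilde u\in W_0^{1,n}(B_R)$ are a welcome elaboration but do not differ in substance from the paper's treatment.
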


\begin{proof}
For $\left\vert x\right\vert <R$, let%
\begin{equation*}
v\left( x\right) =\left\{ 
\begin{array}{cc}
u\left( x\right) , & \text{if }x_{n}>0; \\ 
u\left( x^{\prime },-x_{n}\right) , & \text{if }x_{n}<0\text{.}%
\end{array}%
\right.
\end{equation*}%
Then $v\in W_{0}^{1,n}\left( B_{R}\right) $ with $\left\Vert \nabla
v\right\Vert _{L^{n}\left( B_{R}\right) }^{n}=2\left\Vert \nabla
u\right\Vert _{L^{n}\left( B_{R}^{+}\right) }^{n}$. Using (\ref{eq1.4}) we
see%
\begin{eqnarray*}
\int_{B_{R}^{+}}e^{2^{-\frac{1}{n-1}}a_{n}\frac{\left\vert u\right\vert ^{%
\frac{n}{n-1}}}{\left\Vert \nabla u\right\Vert _{L^{n}\left(
B_{R}^{+}\right) }^{\frac{n}{n-1}}}}dx &=&\int_{B_{R}^{+}}e^{a_{n}\frac{%
\left\vert u\right\vert ^{\frac{n}{n-1}}}{\left\Vert \nabla v\right\Vert
_{L^{n}\left( B_{R}\right) }^{\frac{n}{n-1}}}}dx \\
&\leq &\int_{B_{R}}e^{a_{n}\frac{\left\vert v\right\vert ^{\frac{n}{n-1}}}{%
\left\Vert \nabla v\right\Vert _{L^{n}\left( B_{R}\right) }^{\frac{n}{n-1}}}%
}dx \\
&\leq &c\left( n\right) R^{n}.
\end{eqnarray*}
\end{proof}

We also need a qualitative property of Sobolev functions.

\begin{lemma}
\label{lem2.2}Let $u\in W^{1,n}\left( B_{R}^{+}\right) $ and $a>0$, then%
\begin{equation}
\int_{B_{R}^{+}}e^{a\left\vert u\right\vert ^{\frac{n}{n-1}}}dx<\infty .
\label{eq2.3}
\end{equation}
\end{lemma}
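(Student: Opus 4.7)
The plan is to reduce Lemma \ref{lem2.2} to Lemma \ref{lem1.1} by reflecting $u$ across the flat portion of the boundary $\{x_n=0\}$. Given $u\in W^{1,n}(B_R^+)$, I would define the even reflection
\[
v(x',x_n):=u(x',|x_n|),\qquad x=(x',x_n)\in B_R.
\]
The first step is to verify that $v\in W^{1,n}(B_R)$. This is standard: approximate $u$ in $W^{1,n}(B_R^+)$ by a sequence $u_k\in C^\infty(\overline{B_R^+})$, which is possible since $B_R^+$ is a Lipschitz domain, let $v_k$ denote the corresponding even reflections on $B_R$, and check that $(v_k)$ is Cauchy in $W^{1,n}(B_R)$ with limit $v$. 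The weak derivatives of $v$ are obtained by reflecting those of $u$, with a sign change on $\partial_{x_n}$ in the lower half, so one has the identity $\|v\|_{W^{1,n}(B_R)}^{n} = 2\,\|u\|_{W^{1,n}(B_R^+)}^{n}$.

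Having $v\in W^{1,n}(B_R)$, I then apply Lemma \ref{lem1.1} to obtain
\[
\int_{B_R} e^{a|v|^{n/(n-1)}}\,dx<\infty.
\]
Since $|v(x',x_n)|=|u(x',|x_n|)|$, the change of variable $x_n\mapsto -x_n$ on the lower half gives
\[
\int_{B_R^+} e^{a|u|^{n/(n-1)}}\,dx = \tfrac12 \int_{B_R} e^{a|v|^{n/(n-1)}}\,dx < \infty,
\]
which is the desired conclusion.

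This reduction is entirely routine and presents no genuine obstacle; all the hard work was already absorbed in Lemma \ref{lem1.1}. A slight alternative, which avoids even the brief verification that the even reflection lies in $W^{1,n}(B_R)$, is to invoke a bounded linear extension operator $E\colon W^{1,n}(B_R^+)\to W^{1,n}(\mathbb{R}^n)$ and apply Lemma \ref{lem1.1} to $Eu$ restricted to some open ball containing $\overline{B_R^+}$. Either route makes this lemma merely the half-ball analogue of Lemma \ref{lem1.1}.
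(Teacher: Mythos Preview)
Your proposal is correct and follows essentially the same approach as the paper: extend $u$ from $B_R^+$ to a function in $W^{1,n}$ of a full ball and invoke Lemma~\ref{lem1.1}. The paper's proof is precisely your ``slight alternative'' (it simply asserts the existence of an extension $\widetilde{u}\in W^{1,n}(B_R)$ with $\widetilde{u}|_{B_R^+}=u$ and then bounds $\int_{B_R^+}e^{a|u|^{n/(n-1)}}\,dx\le\int_{B_R}e^{a|\widetilde{u}|^{n/(n-1)}}\,dx<\infty$), while your primary route via even reflection is just a concrete construction of such an extension.
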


\begin{proof}
We can find $\widetilde{u}\in W^{1,n}\left( B_{R}\right) $ such that $\left. 
\widetilde{u}\right\vert _{B_{R}^{+}}=u$. Then it follows from Lemma \ref%
{lem1.1} that%
\begin{equation*}
\int_{B_{R}^{+}}e^{a\left\vert u\right\vert ^{\frac{n}{n-1}}}dx\leq
\int_{B_{R}}e^{a\left\vert \widetilde{u}\right\vert ^{\frac{n}{n-1}%
}}dx<\infty .
\end{equation*}
\end{proof}

\begin{proposition}
\label{prop2.1}Let $0<R\leq 1$, $g$ be a continuous Riemannian metric on $%
\overline{B_{R}^{+}}$. Assume $u_{i}\in W^{1,n}\left( B_{R}^{+}\right) $, $%
u_{i}\rightharpoonup u$ weakly in $W^{1,n}\left( B_{R}^{+}\right) $ and%
\begin{equation*}
\left\vert \nabla _{g}u_{i}\right\vert _{g}^{n}d\mu \rightarrow \left\vert
\nabla _{g}u\right\vert _{g}^{n}d\mu +\sigma \text{ as measure on }B_{R}^{+}%
\text{.}
\end{equation*}%
If $0<p<\sigma \left( \left\{ 0\right\} \right) ^{-\frac{1}{n-1}}$, then
there exists $r>0$ such that%
\begin{equation}
\sup_{i}\int_{B_{r}^{+}}e^{2^{-\frac{1}{n-1}}a_{n}p\left\vert
u_{i}\right\vert ^{\frac{n}{n-1}}}d\mu <\infty .  \label{eq2.4}
\end{equation}%
Here%
\begin{equation}
a_{n}=n\left\vert \mathbb{S}^{n-1}\right\vert ^{\frac{1}{n-1}}.
\label{eq2.5}
\end{equation}
\end{proposition}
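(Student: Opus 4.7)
The plan is to mimic the argument for Proposition \ref{prop1.1}, replacing Moser's inequality (\ref{eq1.4}) by its half-ball analogue Lemma \ref{lem2.1} and Lemma \ref{lem1.1} by its half-ball analogue Lemma \ref{lem2.2}. The factor $2^{-1/(n-1)}$ in the conclusion is exactly the factor inherited from Lemma \ref{lem2.1}, which comes from the reflection trick doubling the $L^n$-norm of the gradient.

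First, by a linear change of variable that preserves the half-space structure (i.e. acting on the $x'$ variables and rescaling $x_n$) together with shrinking $R$, we may assume $g_{ij}(0) = \delta_{ij}$. Fix $p_1 \in (p, \sigma(\{0\})^{-1/(n-1)})$ and choose $\varepsilon > 0$ so that $(1+\varepsilon)\sigma(\{0\}) < p_1^{-(n-1)}$ and $(1+\varepsilon)p < p_1$, as in (\ref{eq1.12})--(\ref{eq1.13}). Set $v_i = u_i - u$, so $v_i \rightharpoonup 0$ weakly in $W^{1,n}(B_R^+)$ and $v_i \to 0$ strongly in $L^n(B_R^+)$.

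Next, pick $0 < R_1 < R$ small and a cutoff $\varphi \in C_c^\infty(B_{R_1})$ with $\varphi \equiv 1$ on $B_r$ for some $r > 0$; note $\varphi v_i \in W^{1,n}(B_{R_1}^+)$ and vanishes near $|x| = R_1$, so Lemma \ref{lem2.1} applies. Using the continuity of $g$, the normalization $g_{ij}(0) = \delta_{ij}$, and the smallness of $R_1$, one obtains as in Proposition \ref{prop1.1}
\[
\limsup_{i\to\infty} \|\nabla(\varphi v_i)\|_{L^n(B_{R_1}^+)}^n \leq (1+\varepsilon)\int_{B_R^+} |\varphi|^n \, d\sigma + c(\varepsilon)\int_{B_R^+} |\varphi|^n |\nabla u|^n \, dx.
\]
Choosing $\varphi$ so that the right-hand side is strictly less than $p_1^{-(n-1)}$ (possible because $(1+\varepsilon)\sigma(\{0\}) < p_1^{-(n-1)}$), we get $\|\nabla(\varphi v_i)\|_{L^n(B_{R_1}^+)}^{n/(n-1)} < p_1^{-1}$ for $i$ large.

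Now apply Lemma \ref{lem2.1} to $\varphi v_i$ to conclude
\[
\int_{B_r^+} e^{2^{-\frac{1}{n-1}} a_n p_1 |v_i|^{\frac{n}{n-1}}} \, d\mu \leq c \int_{B_{R_1}^+} e^{2^{-\frac{1}{n-1}} a_n \frac{|\varphi v_i|^{n/(n-1)}}{\|\nabla(\varphi v_i)\|_{L^n}^{n/(n-1)}}} \, dx \leq c.
\]
Finally, expanding $|u_i|^{n/(n-1)} \leq (1+\varepsilon)|v_i|^{n/(n-1)} + c(\varepsilon)|u|^{n/(n-1)}$ and applying H\"older's inequality with exponents dictated by $(1+\varepsilon)p < p_1$, together with Lemma \ref{lem2.2} applied to $u$, yields
\[
\sup_i \int_{B_r^+} e^{2^{-\frac{1}{n-1}} a_n p |u_i|^{\frac{n}{n-1}}} \, d\mu < \infty.
\]
The main (mild) obstacle is ensuring the cutoff argument works uniformly: the boundary portion of $\partial B_{R_1}^+$ lying in $\{x_n = 0\}$ is not required to see $\varphi = 0$, only $|x| = R_1$ is, so Lemma \ref{lem2.1} applies without modification. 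Everything else is a direct transcription of the interior proof.
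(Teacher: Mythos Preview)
Your proposal is correct and follows essentially the same route as the paper's proof: reduce to $g_{ij}(0)=\delta_{ij}$, choose $p_1$ and $\varepsilon$, set $v_i=u_i-u$, estimate $\|\nabla(\varphi v_i)\|_{L^n(B_R^+)}^n$ via the cutoff, apply Lemma~\ref{lem2.1} to $\varphi v_i$, and finish with H\"older and Lemma~\ref{lem2.2}. Your remark that the linear change of variable must preserve the half-space (block-triangular, via a Cholesky-type factorization of $[g_{ij}(0)]$) is a point the paper leaves implicit, and your observation that $\varphi$ need not vanish on $\Sigma_{R_1}$ matches exactly the hypothesis of Lemma~\ref{lem2.1}.
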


\begin{proof}
By linear changing of variable and shrinking $R$ if necessary we can assume $%
g=g_{ij}dx_{i}dx_{j}$ with $g_{ij}\left( 0\right) =\delta _{ij}$. Fix $%
p_{1}\in \left( p,\sigma \left( \left\{ 0\right\} \right) ^{-\frac{1}{n-1}%
}\right) $, then%
\begin{equation}
\sigma \left( \left\{ 0\right\} \right) <\frac{1}{p_{1}^{n-1}}.
\label{eq2.6}
\end{equation}%
We can find $\varepsilon >0$ such that%
\begin{equation}
\left( 1+\varepsilon \right) \sigma \left( \left\{ 0\right\} \right) <\frac{1%
}{p_{1}^{n-1}}  \label{eq2.7}
\end{equation}%
and%
\begin{equation}
\left( 1+\varepsilon \right) p<p_{1}.  \label{eq2.8}
\end{equation}%
Let $v_{i}=u_{i}-u$, then $v_{i}\rightharpoonup 0$ weakly in $W^{1,n}\left(
B_{R}^{+}\right) $, $v_{i}\rightarrow 0$ in $L^{n}\left( B_{R}^{+}\right) $.
Let $0<R_{1}<R$ be a small number to be determined. For any $\varphi \in
C_{c}^{\infty }\left( B_{R_{1}}^{+}\right) $, we have%
\begin{eqnarray*}
&&\left\Vert \nabla \left( \varphi v_{i}\right) \right\Vert _{L^{n}\left(
B_{R}^{+}\right) }^{n} \\
&\leq &\left( \left\Vert \varphi \nabla v_{i}\right\Vert _{L^{n}}+\left\Vert
v_{i}\nabla \varphi \right\Vert _{L^{n}}\right) ^{n} \\
&\leq &\left( \left\Vert \varphi \nabla u_{i}\right\Vert _{L^{n}}+\left\Vert
\varphi \nabla u\right\Vert _{L^{n}}+\left\Vert v_{i}\nabla \varphi
\right\Vert _{L^{n}}\right) ^{n} \\
&\leq &\left( 1+\frac{\varepsilon }{2}\right) \left\Vert \varphi \nabla
u_{i}\right\Vert _{L^{n}}^{n}+c\left( \varepsilon \right) \left\Vert \varphi
\nabla u\right\Vert _{L^{n}}^{n}+c\left( \varepsilon \right) \left\Vert
v_{i}\nabla \varphi \right\Vert _{L^{n}}^{n} \\
&\leq &\left( 1+\varepsilon \right) \left\Vert \varphi \nabla
_{g}u_{i}\right\Vert _{L^{n}\left( B_{R}^{+},d\mu \right) }^{n}+c\left(
\varepsilon \right) \left\Vert \varphi \nabla u\right\Vert
_{L^{n}}^{n}+c\left( \varepsilon \right) \left\Vert v_{i}\nabla \varphi
\right\Vert _{L^{n}}^{n}.
\end{eqnarray*}%
Hence%
\begin{eqnarray*}
&&\lim \sup_{i\rightarrow \infty }\left\Vert \nabla \left( \varphi
v_{i}\right) \right\Vert _{L^{n}}^{n} \\
&\leq &\left( 1+\varepsilon \right) \int_{B_{R}^{+}}\left\vert \varphi
\right\vert ^{n}d\sigma +\left( 1+\varepsilon \right)
\int_{B_{R}^{+}}\left\vert \varphi \right\vert ^{n}\left\vert \nabla
_{g}u\right\vert _{g}^{n}d\mu +c\left( \varepsilon \right)
\int_{B_{R}^{+}}\left\vert \varphi \right\vert ^{n}\left\vert \nabla
u\right\vert ^{n}dx \\
&\leq &\left( 1+\varepsilon \right) \int_{B_{R}^{+}}\left\vert \varphi
\right\vert ^{n}d\sigma +c\left( \varepsilon \right)
\int_{B_{R}^{+}}\left\vert \varphi \right\vert ^{n}\left\vert \nabla
u\right\vert ^{n}dx.
\end{eqnarray*}%
Since $\left( 1+\varepsilon \right) \sigma \left( \left\{ 0\right\} \right) <%
\frac{1}{p_{1}^{n-1}}$, we can choose $\varphi \in C_{c}^{\infty }\left(
B_{R_{1}}^{+}\right) $ such that $\left. \varphi \right\vert _{B_{r}^{+}}=1$
for some $r>0$ and%
\begin{equation}
\left( 1+\varepsilon \right) \int_{B_{R}^{+}}\left\vert \varphi \right\vert
^{n}d\sigma +c\left( \varepsilon \right) \int_{B_{R}^{+}}\left\vert \varphi
\right\vert ^{n}\left\vert \nabla u\right\vert ^{n}dx<\frac{1}{p_{1}^{n-1}}.
\label{eq2.9}
\end{equation}%
Hence for $i$ large enough, we have%
\begin{equation}
\left\Vert \nabla \left( \varphi v_{i}\right) \right\Vert _{L^{n}}^{n}<\frac{%
1}{p_{1}^{n-1}}.  \label{eq2.10}
\end{equation}%
On the other hand,%
\begin{equation*}
\left\vert u_{i}\right\vert ^{\frac{n}{n-1}}\leq \left( \left\vert
v_{i}\right\vert +\left\vert u\right\vert \right) ^{\frac{n}{n-1}}\leq
\left( 1+\varepsilon \right) \left\vert v_{i}\right\vert ^{\frac{n}{n-1}%
}+c\left( \varepsilon \right) \left\vert u\right\vert ^{\frac{n}{n-1}},
\end{equation*}%
hence%
\begin{equation*}
e^{2^{-\frac{1}{n-1}}a_{n}\left\vert u_{i}\right\vert ^{\frac{n}{n-1}}}\leq
e^{2^{-\frac{1}{n-1}}\left( 1+\varepsilon \right) a_{n}\left\vert
v_{i}\right\vert ^{\frac{n}{n-1}}}e^{c\left( \varepsilon \right) \left\vert
u\right\vert ^{\frac{n}{n-1}}}.
\end{equation*}%
It follows from Lemma \ref{lem2.1} that%
\begin{eqnarray*}
\int_{B_{r}^{+}}e^{2^{-\frac{1}{n-1}}a_{n}p_{1}\left\vert v_{i}\right\vert ^{%
\frac{n}{n-1}}}d\mu &\leq &c\int_{B_{R}^{+}}e^{2^{-\frac{1}{n-1}%
}a_{n}p_{1}\left\vert \varphi v_{i}\right\vert ^{\frac{n}{n-1}}}dx \\
&\leq &c\int_{B_{R}^{+}}e^{2^{-\frac{1}{n-1}}a_{n}\frac{\left\vert \varphi
v_{i}\right\vert ^{\frac{n}{n-1}}}{\left\Vert \nabla \left( \varphi
v_{i}\right) \right\Vert _{L^{n}}^{\frac{n}{n-1}}}}dx \\
&\leq &c.
\end{eqnarray*}%
Using $p<\frac{p_{1}}{1+\varepsilon }$ and Lemma \ref{lem2.2}, it follows
from Holder's inequality that $e^{2^{-\frac{1}{n-1}}a_{n}\left\vert
u_{i}\right\vert ^{\frac{n}{n-1}}}$ is bounded in $L^{p}\left(
B_{r}^{+},d\mu \right) $.
\end{proof}

\begin{theorem}
\label{thm2.1}Let $M^{n}$ be a $C^{1}$ compact manifold with boundary and $g$
be a continuous Riemannian metric on $M$. If $E\subset M$ with $\mu \left(
E\right) \geq \delta >0$, $u\in W^{1,n}\left( M\right) \backslash \left\{
0\right\} $ with $u_{E}=0$, then for any $0<a<a_{n}=n\left\vert \mathbb{S}%
^{n-1}\right\vert ^{\frac{1}{n-1}}$,%
\begin{equation}
\int_{M}e^{2^{-\frac{1}{n-1}}a\frac{\left\vert u\right\vert ^{\frac{n}{n-1}}%
}{\left\Vert \nabla u\right\Vert _{L^{n}}^{\frac{n}{n-1}}}}d\mu \leq c\left(
a,\delta \right) <\infty .  \label{eq2.11}
\end{equation}
\end{theorem}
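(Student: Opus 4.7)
The plan is to run the contradiction argument used in the proof of Theorem \ref{thm1.1} essentially verbatim, but apply Proposition \ref{prop2.1} (not Proposition \ref{prop1.1}) at each boundary point. That is the natural place where the factor $2^{-\frac{1}{n-1}}$ in (\ref{eq2.11}) will arise.

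First I would record a Poincar\'e-type inequality: for $v\in W^{1,n}(M)$ and $E\subset M$ with $\mu(E)\ge \delta$,
\[
\|v-v_E\|_{L^n}\le c(\delta)\|\nabla v\|_{L^n}.
\]
This follows exactly as in the proof of \thmref{thm1.1} from the usual Poincar\'e inequality $\|v-v_M\|_{L^n}\le c\|\nabla v\|_{L^n}$ together with H\"older's bound $|v_E-v_M|\le \delta^{-1/n}\|v-v_M\|_{L^n}$; the presence of a boundary causes no trouble. Suppose now that (\ref{eq2.11}) fails. Then I obtain a sequence $u_i\in W^{1,n}(M)$ with sets $E_i\subset M$, $\mu(E_i)\ge\delta$, $u_{i,E_i}=0$, $\|\nabla u_i\|_{L^n}=1$, yet $\int_M e^{2^{-\frac{1}{n-1}}a|u_i|^{n/(n-1)}}d\mu\to\infty$. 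The Poincar\'e-type bound gives $\|u_i\|_{L^n}\le c(\delta)$, so after passing to a subsequence $u_i\rightharpoonup u$ weakly in $W^{1,n}(M)$ and $|\nabla u_i|^n d\mu\to|\nabla u|^n d\mu+\sigma$ as measures, with $\sigma(M)\le 1$, in particular $\sigma(\{x\})\le 1$ for every $x\in M$.

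The heart of the argument is to show that for every $x\in M$ there exists $r_x>0$ with
\[
\sup_i \int_{B_{r_x}(x)} e^{2^{-\frac{1}{n-1}}a|u_i|^{n/(n-1)}} d\mu <\infty,
\]
after which a finite covering of the compact manifold $M$ yields the desired contradiction. At an interior point $x$ I pick a chart mapping a neighborhood of $x$ diffeomorphically onto a Euclidean ball (sending $x$ to $0$) and apply Proposition \ref{prop1.1} with $p=2^{-\frac{1}{n-1}}a/a_n$; since $a<a_n$ we have $p<2^{-\frac{1}{n-1}}<1\le\sigma(\{x\})^{-1/(n-1)}$, so the hypothesis of \ref{prop1.1} is met and the conclusion gives precisely the bound needed (with an even larger exponent $a_np>2^{-\frac{1}{n-1}}a$). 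At a boundary point $x\in\partial M$ I instead pick a chart mapping a neighborhood of $x$ onto a half-ball $B_R^+$ (sending $x$ to $0$) and apply Proposition \ref{prop2.1} with $p=a/a_n$; since $a<a_n$ we have $p<1\le\sigma(\{x\})^{-1/(n-1)}$, and the conclusion gives exactly the bound on $e^{2^{-\frac{1}{n-1}}a_n p|u_i|^{n/(n-1)}}=e^{2^{-\frac{1}{n-1}}a|u_i|^{n/(n-1)}}$ that we want.

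The main obstacle is really just bookkeeping: one must check that the factor $2^{-\frac{1}{n-1}}<1$ in the target inequality makes the interior estimate automatic (so that the worst-case coefficient everywhere is the boundary one), and that the admissible range $0<p<\sigma(\{x\})^{-1/(n-1)}$ in both propositions actually accommodates the value of $p$ we need given $\sigma(\{x\})\le 1$ and $a<a_n$. Both verifications reduce to the inequality $a<a_n$, so no further subtlety is involved. Once the local bounds are combined via compactness of $M$, we obtain $\sup_i\int_M e^{2^{-\frac{1}{n-1}}a|u_i|^{n/(n-1)}}d\mu<\infty$, contradicting the choice of the sequence $u_i$.
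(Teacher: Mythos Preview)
Your proposal is correct and follows essentially the same approach as the paper: a contradiction argument reducing to the local Propositions \ref{prop1.1} (interior points) and \ref{prop2.1} (boundary points), followed by a covering argument. The only cosmetic difference is that the paper applies Proposition \ref{prop1.1} at interior points with $p=a/a_n$ (getting the stronger bound on $e^{a|u_i|^{n/(n-1)}}$ and then weakening), whereas you take $p=2^{-1/(n-1)}a/a_n$; note that with your choice $a_np=2^{-1/(n-1)}a$ exactly, so the parenthetical ``even larger exponent $a_np>2^{-1/(n-1)}a$'' should read as an equality, but this is harmless.
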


\begin{proof}
For any $v\in W^{1,n}\left( M\right) $,%
\begin{equation*}
\left\Vert v-v_{M}\right\Vert _{L^{n}}\leq c\left\Vert \nabla v\right\Vert
_{L^{n}}.
\end{equation*}%
On the other hand,%
\begin{equation*}
\left\Vert v-v_{E}\right\Vert _{L^{n}}\leq \frac{c}{\mu \left( E\right) ^{%
\frac{1}{n}}}\left\Vert v\right\Vert _{L^{n}}\leq c\left( \delta \right)
\left\Vert v\right\Vert _{L^{n}}.
\end{equation*}%
Replacing $v$ by $v-v_{M}$, we see%
\begin{equation*}
\left\Vert v-v_{E}\right\Vert _{L^{2}}\leq c\left( \delta \right) \left\Vert
v-v_{M}\right\Vert _{L^{n}}\leq c\left( \delta \right) \left\Vert \nabla
v\right\Vert _{L^{n}}.
\end{equation*}

If (\ref{eq2.11}) is not true, then we can find a sequence $u_{i}\in
W^{1,n}\left( M\right) $, $E_{i}\subset M$ with $\mu \left( E_{i}\right)
\geq \delta $, $u_{i,E_{i}}=0$, $\left\Vert \nabla u_{i}\right\Vert
_{L^{n}}=1$ and%
\begin{equation*}
\int_{M}e^{2^{-\frac{1}{n-1}}a\left\vert u_{i}\right\vert ^{\frac{n}{n-1}%
}}d\mu \rightarrow \infty
\end{equation*}%
as $i\rightarrow \infty $. Since%
\begin{equation*}
\left\Vert u_{i}\right\Vert _{L^{n}}=\left\Vert u_{i}-u_{i,E_{i}}\right\Vert
_{L^{n}}\leq c\left( \delta \right) \left\Vert \nabla u_{i}\right\Vert
_{L^{n}}=c\left( \delta \right) ,
\end{equation*}%
we see $u_{i}$ is bounded in $W^{1,n}\left( M\right) $. After passing to a
subsequence we can find $u\in W^{1,n}\left( M\right) $ such that $%
u_{i}\rightharpoonup u$ weakly in $W^{1,n}\left( M\right) $ and a measure on 
$M$, $\sigma $ such that%
\begin{equation*}
\left\vert \nabla u_{i}\right\vert ^{n}d\mu \rightarrow \left\vert \nabla
u\right\vert ^{n}d\mu +\sigma
\end{equation*}%
as measure. Note that $\sigma \left( M\right) \leq 1$. For any $x\in
M\backslash \partial M$, since%
\begin{equation*}
0<\frac{a}{a_{n}}<\sigma \left( \left\{ x\right\} \right) ^{-\frac{1}{n-1}},
\end{equation*}%
it follows from Proposition \ref{prop1.1} that for some $r>0$, we have%
\begin{equation*}
\sup_{i}\int_{B_{r}\left( x\right) }e^{a\left\vert u_{i}\right\vert ^{\frac{n%
}{n-1}}}d\mu <\infty .
\end{equation*}%
Hence%
\begin{equation*}
\sup_{i}\int_{B_{r}\left( x\right) }e^{2^{-\frac{1}{n-1}}a\left\vert
u_{i}\right\vert ^{\frac{n}{n-1}}}d\mu <\infty .
\end{equation*}%
For $x\in \partial M$, using $0<\frac{a}{a_{n}}<\sigma \left( \left\{
x\right\} \right) ^{-\frac{1}{n-1}}$ and Proposition \ref{prop2.1}, we can
find $r>0$ such that%
\begin{equation*}
\sup_{i}\int_{B_{r}\left( x\right) }e^{2^{-\frac{1}{n-1}}a\left\vert
u_{i}\right\vert ^{\frac{n}{n-1}}}d\mu <\infty .
\end{equation*}%
A covering argument implies%
\begin{equation*}
\sup_{i}\int_{M}e^{2^{-\frac{1}{n-1}}a\left\vert u_{i}\right\vert ^{\frac{n}{%
n-1}}}d\mu <\infty .
\end{equation*}%
This contradicts with the choice of $u_{i}$.
\end{proof}

\begin{corollary}
\label{cor2.1}Let $\Omega \subset \mathbb{R}^{n}$ be a bounded open domain
with $C^{1}$ boundary, $0<a<a_{n}$, then for any $u\in W^{1,n}\left( \Omega
\right) \backslash \left\{ 0\right\} $ with $\int_{\Omega }udx=0$, we have%
\begin{equation}
\int_{\Omega }e^{2^{-\frac{1}{n-1}}a\frac{\left\vert u\right\vert ^{\frac{n}{%
n-1}}}{\left\Vert \nabla u\right\Vert _{L^{n}}^{\frac{n}{n-1}}}}dx\leq
c\left( a,\Omega \right) <\infty .  \label{eq2.12}
\end{equation}
\end{corollary}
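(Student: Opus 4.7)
The plan is to view this as a direct specialization of Theorem \ref{thm2.1}. Set $M = \overline{\Omega}$, which is a $C^1$ compact manifold with boundary since $\Omega$ has $C^1$ boundary, and equip it with the restriction of the Euclidean metric, which is smooth and in particular continuous. Take $E = \Omega$, so that $\mu(E) = |\Omega| > 0$, and choose $\delta = |\Omega|$ (a fixed positive constant depending only on $\Omega$). The hypothesis $\int_\Omega u\, dx = 0$ is then exactly the condition $u_E = 0$ in Theorem \ref{thm2.1}, and the associated Sobolev gradient coincides with the Euclidean $\nabla u$.

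With this identification, Theorem \ref{thm2.1} applied to $u \in W^{1,n}(M) \setminus \{0\}$ yields
\[
\int_\Omega \exp\!\Bigl( 2^{-\frac{1}{n-1}} a \,\frac{|u|^{\frac{n}{n-1}}}{\|\nabla u\|_{L^n}^{\frac{n}{n-1}}}\Bigr)\, dx \le c(a,\delta) = c(a,\Omega) < \infty,
\]
which is exactly \eqref{eq2.12}. There is essentially no obstacle; the only point to check is that the abstract manifold-with-boundary framework of \S\ref{sec2} genuinely specializes to a bounded Euclidean domain with $C^1$ boundary (so that half-ball coordinate charts exist around boundary points), and this is immediate from the regularity of $\partial \Omega$. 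No smoothness of the metric beyond continuity is required, which is exactly what Theorem \ref{thm2.1} allows.
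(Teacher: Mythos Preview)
Your proposal is correct and matches the paper's approach exactly: the paper states Corollary~\ref{cor2.1} immediately after Theorem~\ref{thm2.1} with no separate proof, treating it as the evident specialization $M=\overline{\Omega}$, Euclidean metric, $E=\Omega$, $\delta=|\Omega|$, which is precisely what you wrote out.
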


Corollary \ref{cor2.1} should be compared to \cite[Theorem 1.1]{Ci}.

\begin{example}
\label{ex2.1}Let $M^{n}$ be a $C^{1}$ compact manifold with boundary and $g$
be a continuous Riemannian metric on $M$. Denote%
\begin{equation}
\kappa \left( M,g\right) =\inf_{\substack{ u\in W^{1,n}\left( M\right)
\backslash \left\{ 0\right\}  \\ u_{M}=0}}\frac{\left\Vert \nabla
u\right\Vert _{L^{n}}}{\left\Vert u\right\Vert _{L^{n}}}.  \label{eq2.13}
\end{equation}%
Assume $0\leq \kappa <\kappa \left( M,g\right) $, $0<a<a_{n}$, $u\in
W^{1,n}\left( M\right) $ with $u_{M}=0$ and%
\begin{equation}
\left\Vert \nabla u\right\Vert _{L^{n}}^{n}-\kappa ^{n}\left\Vert
u\right\Vert _{L^{n}}^{n}\leq 1,  \label{eq2.14}
\end{equation}%
then we have%
\begin{equation}
\int_{M}e^{2^{-\frac{1}{n-1}}a\left\vert u\right\vert ^{\frac{n}{n-1}}}d\mu
\leq c\left( \kappa ,a\right) <\infty .  \label{eq2.15}
\end{equation}
\end{example}

Since the proof of Example \ref{ex2.1} is almost identical to the proof of
Example \ref{ex1.1} (using Proposition \ref{prop1.1} and \ref{prop2.1} when
necessary), we omit it here. Example \ref{ex2.1} should be compared with 
\cite{N1}.

Next we switch to the zero boundary value case. For $R>0$, we denote $\Sigma
_{R}$ as the base of $B_{R}^{+}$ i.e.%
\begin{equation}
\Sigma _{R}=\left\{ \left( x^{\prime },0\right) :x^{\prime }\in \mathbb{R}%
^{n-1},\left\vert x^{\prime }\right\vert <R\right\} .  \label{eq2.16}
\end{equation}

\begin{proposition}
\label{prop2.2}Let $0<R\leq 1$, $g$ be a continuous Riemannian metric on $%
\overline{B_{R}^{+}}$. Assume $u_{i}\in W^{1,n}\left( B_{R}^{+}\right) $
s.t. $\left. u_{i}\right\vert _{\Sigma _{R}}=0$, $u_{i}\rightharpoonup u$
weakly in $W^{1,n}\left( B_{R}^{+}\right) $ and%
\begin{equation*}
\left\vert \nabla _{g}u_{i}\right\vert _{g}^{n}d\mu \rightarrow \left\vert
\nabla _{g}u\right\vert _{g}^{n}d\mu +\sigma \text{ as measure on }B_{R}^{+}%
\text{.}
\end{equation*}%
If $0<p<\sigma \left( \left\{ 0\right\} \right) ^{-\frac{1}{n-1}}$, then
there exists $r>0$ such that%
\begin{equation}
\sup_{i}\int_{B_{r}^{+}}e^{a_{n}p\left\vert u_{i}\right\vert ^{\frac{n}{n-1}%
}}d\mu <\infty .  \label{eq2.17}
\end{equation}%
Here%
\begin{equation}
a_{n}=n\left\vert \mathbb{S}^{n-1}\right\vert ^{\frac{1}{n-1}}.
\label{eq2.18}
\end{equation}
\end{proposition}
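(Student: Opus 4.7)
The plan is to mirror the proof of Proposition \ref{prop2.1} step by step, replacing the reflection trick (Lemma \ref{lem2.1}) by a zero-extension trick that exploits the Dirichlet condition on $\Sigma_R$; this is exactly what lets us recover the full constant $a_n$ instead of the halved constant $2^{-1/(n-1)} a_n$. First, by a linear change of variable and shrinking $R$, I would assume $g_{ij}(0) = \delta_{ij}$. Pick $p_1 \in (p, \sigma(\{0\})^{-1/(n-1)})$ and a small $\varepsilon > 0$ so that $(1+\varepsilon)\sigma(\{0\}) < p_1^{-(n-1)}$ and $(1+\varepsilon)p < p_1$. Set $v_i = u_i - u$, so that $v_i \rightharpoonup 0$ in $W^{1,n}(B_R^+)$ and $v_i \to 0$ in $L^n(B_R^+)$. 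Since the trace operator on $W^{1,n}(B_R^+)$ is weakly continuous and $\left. u_i\right|_{\Sigma_R} = 0$, the weak limit satisfies $\left. u\right|_{\Sigma_R} = 0$, hence $\left. v_i\right|_{\Sigma_R} = 0$ as well.

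Next, as in the proof of Proposition \ref{prop2.1}, for any $\varphi \in C_c^\infty(B_{R_1}^+)$ with $R_1 < R$ small, the expansion of $\|\nabla(\varphi v_i)\|_{L^n(B_R^+)}^n$ and the use of $g_{ij}(0) = \delta_{ij}$ together with the weak convergence of $|\nabla_g u_i|_g^n d\mu$ yield
\begin{equation*}
\limsup_{i \to \infty} \|\nabla(\varphi v_i)\|_{L^n(B_R^+)}^n \leq (1+\varepsilon) \int_{B_R^+} |\varphi|^n \, d\sigma + c(\varepsilon) \int_{B_R^+} |\varphi|^n |\nabla u|^n \, dx.
\end{equation*}
Using $(1+\varepsilon)\sigma(\{0\}) < p_1^{-(n-1)}$, I would choose $\varphi$ with $\left.\varphi\right|_{B_r^+} = 1$ for some $r > 0$ so that the right-hand side is strictly smaller than $p_1^{-(n-1)}$; then for $i$ large, $\|\nabla(\varphi v_i)\|_{L^n(B_R^+)}^n < p_1^{-(n-1)}$.

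Here is the key new step. Because $\left. v_i\right|_{\Sigma_R} = 0$ and $\varphi$ is supported in $B_{R_1}^+$ (away from the curved part of $\partial B_R^+$), extending $\varphi v_i$ by zero across $\Sigma_R$ and outside $B_{R_1}$ produces a function $w_i \in W_0^{1,n}(B_R)$ with
\begin{equation*}
\|\nabla w_i\|_{L^n(B_R)}^n = \|\nabla(\varphi v_i)\|_{L^n(B_R^+)}^n < \frac{1}{p_1^{n-1}}.
\end{equation*}
There is no doubling here, in contrast to the even reflection used in Lemma \ref{lem2.1}; this is the whole source of the improved constant. Now I apply Moser's sharp inequality \eqref{eq1.4} directly to $w_i$ on $B_R$, which gives $\int_{B_R} e^{a_n p_1 |w_i|^{n/(n-1)}} dx \leq c$. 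Restricting to the upper half and using $w_i = v_i$ on $B_r^+$ yields $\int_{B_r^+} e^{a_n p_1 |v_i|^{n/(n-1)}} d\mu \leq c$.

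Finally, from the elementary inequality $|u_i|^{n/(n-1)} \leq (1+\varepsilon)|v_i|^{n/(n-1)} + c(\varepsilon)|u|^{n/(n-1)}$ and $(1+\varepsilon)p < p_1$, Hölder's inequality combined with Lemma \ref{lem2.2} applied to $u$ (to handle the $e^{c(\varepsilon)|u|^{n/(n-1)}}$ factor to any power) shows that $e^{a_n p |u_i|^{n/(n-1)}}$ is bounded in $L^1(B_r^+, d\mu)$, giving \eqref{eq2.17}. The only subtle point I can see is the trace-passage $\left. u\right|_{\Sigma_R} = 0$, which is needed so that $v_i$ inherits the zero boundary values and the zero extension yields a $W_0^{1,n}(B_R)$ function; everything else is a near-verbatim adaptation of the proof of Proposition \ref{prop2.1}, with Lemma \ref{lem2.1} replaced by the sharp inequality \eqref{eq1.4} itself.
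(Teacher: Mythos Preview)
Your proof is correct, and the core idea---exploiting the Dirichlet condition on $\Sigma_R$ via zero extension to recover the full constant $a_n$---is the same as the paper's. However, the paper organises the argument differently and more economically: rather than re-running the whole concentration-compactness argument of Proposition~\ref{prop2.1} with the zero-extension inserted at the final step, the paper zero-extends \emph{at the outset}. It extends $g$ to a continuous metric $h$ on $\overline{B_R}$, extends each $u_i$ by zero across $\Sigma_R$ to obtain $v_i \in W^{1,n}(B_R)$, observes that $|\nabla_h v_i|_h^n\,d\mu_h \to |\nabla_h v|_h^n\,d\mu_h + \tau$ with $\tau(\{0\}) = \sigma(\{0\})$, and then invokes Proposition~\ref{prop1.1} (the full-ball result) as a black box.

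What your approach buys is self-containment: you see exactly where the Dirichlet condition enters (namely, to make $\varphi v_i$ extend by zero into $W_0^{1,n}(B_R)$), and you explicitly verify via weak continuity of the trace that $u|_{\Sigma_R}=0$, a point the paper leaves implicit. What the paper's approach buys is brevity: the entire proof is five lines, with no need to repeat the cutoff, $\varepsilon$-splitting, or H\"older steps, since those are already packaged inside Proposition~\ref{prop1.1}.
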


\begin{proof}
Let $h$ be a continuous Riemannian metric on $\overline{B_{R}}$, which is an
extension of $g$. We define%
\begin{equation*}
v_{i}\left( x\right) =\left\{ 
\begin{tabular}{ll}
$u_{i}\left( x\right) ,$ & if $x\in B_{R}^{+}$ \\ 
$0,$ & if $x\in B_{R}\backslash B_{R}^{+}$%
\end{tabular}%
\right. ,\quad v\left( x\right) =\left\{ 
\begin{tabular}{ll}
$u\left( x\right) ,$ & if $x\in B_{R}^{+}$ \\ 
$0,$ & if $x\in B_{R}\backslash B_{R}^{+}$%
\end{tabular}%
\right. ,
\end{equation*}%
and a measure $\tau $ on $B_{R}$ by $\tau \left( E\right) =\sigma \left(
E\cap B_{R}\right) $ for any Borel set $E\subset B_{R}$. Then $v_{i},v\in
W^{1,n}\left( B_{R}\right) $, $v_{i}\rightharpoonup v$ weakly in $%
W^{1,n}\left( B_{R}\right) $ and%
\begin{equation*}
\left\vert \nabla _{h}v_{i}\right\vert _{h}^{n}d\mu _{h}\rightarrow
\left\vert \nabla _{h}v\right\vert _{h}^{n}d\mu _{h}+\tau \text{ as measure
on }B_{R}.
\end{equation*}%
Here $\mu _{h}$ is the measure associated with $h$. Since $\tau \left(
\left\{ 0\right\} \right) =\sigma \left( \left\{ 0\right\} \right) $, it
follows from Proposition \ref{prop2.1} that for some $r>0$,%
\begin{equation*}
\sup_{i}\int_{B_{r}}e^{a_{n}p\left\vert v_{i}\right\vert ^{\frac{n}{n-1}%
}}d\mu _{h}<\infty .
\end{equation*}%
Hence%
\begin{equation*}
\sup_{i}\int_{B_{r}^{+}}e^{a_{n}p\left\vert u_{i}\right\vert ^{\frac{n}{n-1}%
}}d\mu <\infty .
\end{equation*}
\end{proof}

\begin{theorem}
\label{thm2.2}Let $M^{n}$ be a $C^{1}$ compact manifold with boundary and $g$
be a continuous Riemannian metric on $M$. Denote%
\begin{equation}
\kappa _{0}\left( M,g\right) =\inf_{u\in W_{0}^{1,n}\left( M\right)
\backslash \left\{ 0\right\} }\frac{\left\Vert \nabla u\right\Vert _{L^{n}}}{%
\left\Vert u\right\Vert _{L^{n}}}.  \label{eq2.19}
\end{equation}%
Assume $0\leq \kappa <\kappa _{0}\left( M,g\right) $, $0<a<a_{n}=n\left\vert 
\mathbb{S}^{n-1}\right\vert ^{\frac{1}{n-1}}$, $u\in W_{0}^{1,n}\left(
M\right) $ and%
\begin{equation}
\left\Vert \nabla u\right\Vert _{L^{n}}^{n}-\kappa ^{n}\left\Vert
u\right\Vert _{L^{n}}^{n}\leq 1,  \label{eq2.20}
\end{equation}%
then we have%
\begin{equation}
\int_{M}e^{a\left\vert u\right\vert ^{\frac{n}{n-1}}}d\mu \leq c\left(
\kappa ,a\right) <\infty .  \label{eq2.21}
\end{equation}
\end{theorem}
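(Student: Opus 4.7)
The plan is to run a contradiction argument in the spirit of Example \ref{ex1.1} and Example \ref{ex2.1}, but exploiting the Dirichlet condition at $\partial M$ so that at boundary points we may invoke Proposition \ref{prop2.2} (constant $a_n$) rather than Proposition \ref{prop2.1} (constant $2^{-1/(n-1)}a_n$). This is precisely the mechanism that explains why (\ref{eq2.21}) has no factor $2^{-1/(n-1)}$ in the exponent, in contrast with (\ref{eq2.15}).

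Assume (\ref{eq2.21}) fails. Then I would choose $u_i\in W_0^{1,n}(M)$ satisfying (\ref{eq2.20}) with $\int_M e^{a|u_i|^{n/(n-1)}}d\mu\to\infty$. The definition of $\kappa_0(M,g)$ together with $\kappa<\kappa_0(M,g)$ yields
$$
\bigl(\kappa_0(M,g)^n-\kappa^n\bigr)\|u_i\|_{L^n}^n\leq \|\nabla u_i\|_{L^n}^n-\kappa^n\|u_i\|_{L^n}^n\leq 1,
$$
so $\|u_i\|_{L^n}$ is bounded, and hence $\|\nabla u_i\|_{L^n}$ is bounded as well. Passing to a subsequence, I get $u_i\rightharpoonup u$ weakly in $W^{1,n}(M)$, $u_i\to u$ in $L^n(M)$, and a finite nonnegative Borel measure $\sigma$ on $M$ with
$$
|\nabla u_i|^n d\mu \to |\nabla u|^n d\mu+\sigma
$$
as measures. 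Since $W_0^{1,n}(M)$ is weakly closed in $W^{1,n}(M)$, the limit $u$ still lies in $W_0^{1,n}(M)$. Passing to the limit in (\ref{eq2.20}) and using $\|\nabla u\|_{L^n}^n\geq \kappa^n\|u\|_{L^n}^n$ (which follows from $\kappa<\kappa_0(M,g)$), I obtain $\sigma(M)\leq 1$, so $\sigma(\{x\})\leq 1$ and therefore $a/a_n<\sigma(\{x\})^{-1/(n-1)}$ for every $x\in M$.

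Now the localization step. Fix $x\in M$. If $x\in M\setminus\partial M$, then on a small coordinate ball around $x$ Proposition \ref{prop1.1} applies directly and gives $r>0$ with $\sup_i\int_{B_r(x)}e^{a|u_i|^{n/(n-1)}}d\mu<\infty$. If $x\in\partial M$, I work in a boundary chart identifying a neighborhood of $x$ with a half ball $B_R^+$ so that $\partial M$ corresponds to $\Sigma_R$. Because $u_i\in W_0^{1,n}(M)$, the restrictions (extended by zero across $\Sigma_R$, which is what Proposition \ref{prop2.2} needs) satisfy the hypothesis $u_i|_{\Sigma_R}=0$, and Proposition \ref{prop2.2} produces $r>0$ with $\sup_i\int_{B_r^+}e^{a_n p|u_i|^{n/(n-1)}}d\mu<\infty$, where $p$ is any number with $a/a_n<p<\sigma(\{x\})^{-1/(n-1)}$. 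In particular this dominates $\sup_i\int_{B_r(x)\cap M}e^{a|u_i|^{n/(n-1)}}d\mu<\infty$. Covering the compact manifold $M$ by finitely many such interior and boundary neighborhoods yields $\sup_i\int_M e^{a|u_i|^{n/(n-1)}}d\mu<\infty$, contradicting the choice of $u_i$.

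The only delicate point, and the reason the theorem improves on Example \ref{ex2.1}, is verifying that the Dirichlet condition passes to the local half-ball model so that Proposition \ref{prop2.2} rather than \ref{prop2.1} applies; this is straightforward since the chart takes $\partial M$ to $\Sigma_R$ and the trace condition is local. Weak closedness of $W_0^{1,n}$ and the bound $\sigma(M)\leq 1$ are routine verifications I would state but not belabor.
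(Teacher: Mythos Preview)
Your proof is correct and follows exactly the approach the paper indicates: the paper states that the proof of Theorem~\ref{thm2.2} is almost identical to that of Example~\ref{ex1.1}, using Proposition~\ref{prop1.1} at interior points and Proposition~\ref{prop2.2} at boundary points, which is precisely the argument you carry out. Your observation that weak closedness of $W_0^{1,n}(M)$ ensures $u\in W_0^{1,n}(M)$, so that $\|\nabla u\|_{L^n}^n-\kappa^n\|u\|_{L^n}^n\geq 0$ and hence $\sigma(M)\leq 1$, is the only point requiring the Dirichlet condition in the global step, and you identify it correctly.
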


Since the proof of Theorem \ref{thm2.2} is almost identical to the proof of
Example \ref{ex1.1} (using Proposition \ref{prop1.1} and \ref{prop2.2} when
necessary), we omit it here. Theorem \ref{thm2.2} should be compared with 
\cite{AD,N2,T,Y1}.

\section{Second order Sobolev spaces\label{sec3}}

Let $n\in \mathbb{N}$, $n\geq 3$ and $\Omega \subset \mathbb{R}^{n}$ be an
open domain. For $1\leq p<\infty $ and $u\in W^{2,p}\left( \Omega \right) $,
we denote%
\begin{equation}
\left\Vert u\right\Vert _{W^{2,p}\left( \Omega \right) }=\left[ \int_{\Omega
}\left( \left\vert u\right\vert ^{p}+\left\vert \nabla u\right\vert
^{p}+\left\vert D^{2}u\right\vert ^{p}\right) dx\right] ^{\frac{1}{p}}
\label{eq3.1}
\end{equation}%
and $W_{0}^{2,p}\left( \Omega \right) $ as the closure of $C_{c}^{\infty
}\left( \Omega \right) $ in $W^{2,p}\left( \Omega \right) $. For $u\in
W^{2,\infty }\left( \Omega \right) $, we denote%
\begin{equation}
\left\Vert u\right\Vert _{W^{2,\infty }\left( \Omega \right) }=\left\Vert
u\right\Vert _{L^{\infty }\left( \Omega \right) }+\left\Vert \nabla
u\right\Vert _{L^{\infty }\left( \Omega \right) }+\left\Vert
D^{2}u\right\Vert _{L^{\infty }\left( \Omega \right) }.  \label{eq3.2}
\end{equation}

For $R>0$, it is shown in \cite{A} that for any $u\in W_{0}^{2,\frac{n}{2}%
}\left( B_{R}^{n}\right) \backslash \left\{ 0\right\} $,%
\begin{equation}
\int_{B_{R}}\exp \left( a_{2,n}\frac{\left\vert u\right\vert ^{\frac{n}{n-2}}%
}{\left\Vert \Delta u\right\Vert _{L^{\frac{n}{2}}}^{\frac{n}{n-2}}}\right)
dx\leq c\left( n\right) R^{n}.  \label{eq3.3}
\end{equation}%
Here%
\begin{equation}
a_{2,n}=\frac{n}{\left\vert \mathbb{S}^{n-1}\right\vert }\left( \frac{4\pi ^{%
\frac{n}{2}}}{\Gamma \left( \frac{n-2}{2}\right) }\right) ^{\frac{n}{n-2}}
\label{eq3.4}
\end{equation}%
and%
\begin{equation}
\Gamma \left( \alpha \right) =\int_{0}^{\infty }t^{\alpha -1}e^{-t}dt
\label{eq3.5}
\end{equation}%
for $\alpha >0$.

Similar to Lemma \ref{lem1.1}, we have

\begin{lemma}
\label{lem3.1}If $u\in W^{2,\frac{n}{2}}\left( B_{R}^{n}\right) $, then for
any $a>0$,%
\begin{equation}
\int_{B_{R}}e^{a\left\vert u\right\vert ^{\frac{n}{n-2}}}dx<\infty .
\label{eq3.6}
\end{equation}
\end{lemma}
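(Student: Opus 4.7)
The plan is to mimic the proof of Lemma~\ref{lem1.1}, replacing Moser's inequality (\ref{eq1.4}) with the second-order Adams inequality (\ref{eq3.3}). First I would handle the case $u \in W_0^{2,n/2}(B_R)$. If $u$ is bounded the conclusion is immediate, so assume $u$ is unbounded. A crucial fact I will invoke is that on $W_0^{2,n/2}(B_R)$ the quantity $\|\Delta\cdot\|_{L^{n/2}}$ is equivalent to the full $W^{2,n/2}$-norm (Calder\'on--Zygmund plus Poincar\'e), so by the very definition of $W_0^{2,n/2}(B_R)$ as the $W^{2,n/2}$-closure of $C_c^\infty(B_R)$, for any tiny $\varepsilon>0$ I can find $v\in C_c^\infty(B_R)$ with
\begin{equation*}
\|\Delta(u-v)\|_{L^{n/2}}<\varepsilon.
\end{equation*}

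Setting $w=u-v$, one has $|u|\le \|v\|_{L^\infty}+|w|$, hence
\begin{equation*}
|u|^{\frac{n}{n-2}} \le 2^{\frac{2}{n-2}}\|v\|_{L^\infty}^{\frac{n}{n-2}}+2^{\frac{2}{n-2}}|w|^{\frac{n}{n-2}},
\end{equation*}
so
\begin{equation*}
e^{a|u|^{\frac{n}{n-2}}}\le e^{2^{\frac{2}{n-2}}a\|v\|_{L^\infty}^{\frac{n}{n-2}}}\cdot \exp\!\left(2^{\frac{2}{n-2}}a|w|^{\frac{n}{n-2}}\right).
\end{equation*}
Choosing $\varepsilon$ so small that $2^{\frac{2}{n-2}}a\,\|\Delta w\|_{L^{n/2}}^{\frac{n}{n-2}}\le a_{2,n}$, the right-hand exponential is bounded above by $\exp\!\bigl(a_{2,n}|w|^{n/(n-2)}/\|\Delta w\|_{L^{n/2}}^{n/(n-2)}\bigr)$, and (\ref{eq3.3}) applied to $w\in W_0^{2,n/2}(B_R)$ yields
\begin{equation*}
\int_{B_R}e^{a|u|^{\frac{n}{n-2}}}dx \le c(n)\,R^n\,e^{2^{\frac{2}{n-2}}a\|v\|_{L^\infty}^{\frac{n}{n-2}}}<\infty.
\end{equation*}

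For a general $u\in W^{2,n/2}(B_R)$, I would reduce to the previous case by producing an extension $\widetilde{u}\in W_0^{2,n/2}(B_{2R})$ with $\widetilde{u}|_{B_R}=u$. Since $B_R$ is a smooth (in particular Lipschitz) domain, a standard extension operator gives $\widehat{u}\in W^{2,n/2}(\mathbb{R}^n)$ restricting to $u$ on $B_R$; multiplying by a cutoff $\chi\in C_c^\infty(B_{2R})$ with $\chi\equiv 1$ on $B_R$ yields $\widetilde u=\chi\widehat u\in W_0^{2,n/2}(B_{2R})$. Then
\begin{equation*}
\int_{B_R}e^{a|u|^{\frac{n}{n-2}}}dx\le \int_{B_{2R}}e^{a|\widetilde u|^{\frac{n}{n-2}}}dx<\infty
\end{equation*}
by the first step.

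The main obstacle is the extension/approximation step: one really needs the Calder\'on--Zygmund-type equivalence of $\|\Delta\cdot\|_{L^{n/2}}$ with the $W^{2,n/2}$-norm on $W_0^{2,n/2}(B_R)$ to ensure that the small perturbation $w=u-v$ actually has small $\|\Delta w\|_{L^{n/2}}$, not merely small $\|u-v\|_{W^{2,n/2}}$. Once this is granted the argument is entirely parallel to Lemma~\ref{lem1.1}; no first-order input enters except through the Adams inequality itself.
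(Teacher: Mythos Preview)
Your proof is correct and follows the paper's argument essentially verbatim: approximate $u\in W_0^{2,n/2}(B_R)$ by a smooth compactly supported $v$, absorb the bounded part, apply Adams' inequality (\ref{eq3.3}) to the small remainder $w=u-v$, and then reduce the general case by extension to $W_0^{2,n/2}(B_{2R})$. One minor remark: the ``main obstacle'' you flag is not an obstacle at all---passing from small $\|u-v\|_{W^{2,n/2}}$ to small $\|\Delta(u-v)\|_{L^{n/2}}$ is the trivial direction (pointwise $|\Delta w|\le c(n)|D^2w|$), so no Calder\'on--Zygmund input is needed; the paper simply writes $\|\Delta u-\Delta v\|_{L^{n/2}}\le c(n)\varepsilon$ and moves on.
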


\begin{proof}
First we assume $u\in W_{0}^{2,\frac{n}{2}}\left( B_{R}\right) $. Without
losing of generality, we can assume $u$ is unbounded. For $\varepsilon >0$,
a tiny number to be determined, we can find $v\in C_{c}^{\infty }\left(
B_{R}\right) $ such that%
\begin{equation*}
\left\Vert u-v\right\Vert _{W^{2,\frac{n}{2}}}<\varepsilon .
\end{equation*}%
Hence%
\begin{equation*}
\left\Vert \Delta u-\Delta v\right\Vert _{L^{\frac{n}{2}}}\leq c\left(
n\right) \varepsilon .
\end{equation*}%
Let $w=u-v$, then%
\begin{equation*}
\left\vert u\right\vert =\left\vert v+w\right\vert \leq \left\Vert
v\right\Vert _{L^{\infty }}+\left\vert w\right\vert .
\end{equation*}%
Hence%
\begin{equation*}
\left\vert u\right\vert ^{\frac{n}{n-2}}\leq 2^{\frac{2}{n-2}}\left\Vert
v\right\Vert _{L^{\infty }}^{\frac{n}{n-2}}+2^{\frac{2}{n-2}}\left\vert
w\right\vert ^{\frac{n}{n-2}}.
\end{equation*}%
It follows that%
\begin{equation*}
e^{a\left\vert u\right\vert ^{\frac{n}{n-1}}}\leq e^{2^{\frac{2}{n-2}%
}a\left\Vert v\right\Vert _{L^{\infty }}^{\frac{n}{n-2}}}e^{2^{\frac{2}{n-2}%
}a\left\vert w\right\vert ^{\frac{n}{n-2}}}\leq e^{2^{\frac{2}{n-2}%
}a\left\Vert v\right\Vert _{L^{\infty }}^{\frac{n}{n-2}}}e^{a_{2,n}\frac{%
\left\vert w\right\vert ^{\frac{n}{n-2}}}{\left\Vert \Delta w\right\Vert
_{L^{\frac{n}{2}}}^{\frac{n}{n-2}}}}
\end{equation*}%
if $\varepsilon $ is small enough. Hence%
\begin{equation*}
\int_{B_{R}}e^{a\left\vert u\right\vert ^{\frac{n}{n-2}}}dx\leq c\left(
n\right) e^{2^{\frac{2}{n-2}}a\left\Vert v\right\Vert _{L^{\infty }}^{\frac{n%
}{n-2}}}R^{n}<\infty .
\end{equation*}%
In general, if $u\in W^{2,\frac{n}{2}}\left( B_{R}\right) $, then we can
find $\widetilde{u}\in W_{0}^{2,\frac{n}{2}}\left( B_{2R}\right) $ such that 
$\left. \widetilde{u}\right\vert _{B_{R}}=u$. Hence%
\begin{equation*}
\int_{B_{R}}e^{a\left\vert u\right\vert ^{\frac{n}{n-2}}}dx\leq
\int_{B_{2R}}e^{a\left\vert \widetilde{u}\right\vert ^{\frac{n}{n-2}%
}}dx<\infty .
\end{equation*}
\end{proof}

\begin{proposition}
\label{prop3.1}Let $0<R\leq 1$, $g$ be a $C^{1}$ Riemannian metric on $%
\overline{B_{R}^{n}}$. Assume $u_{i}\in W^{2,\frac{n}{2}}\left(
B_{R}^{n}\right) $, $u_{i}\rightharpoonup u$ weakly in $W^{2,\frac{n}{2}%
}\left( B_{R}\right) $ and%
\begin{equation*}
\left\vert \Delta _{g}u_{i}\right\vert ^{\frac{n}{2}}d\mu \rightarrow
\left\vert \Delta _{g}u\right\vert ^{\frac{n}{2}}d\mu +\sigma \text{ as
measure on }B_{R}.
\end{equation*}%
If $0<p<\sigma \left( \left\{ 0\right\} \right) ^{-\frac{2}{n-2}}$, then for
some $r>0$,%
\begin{equation}
\sup_{i}\int_{B_{r}}e^{a_{2,n}p\left\vert u_{i}\right\vert ^{\frac{n}{n-2}%
}}d\mu <\infty .  \label{eq3.7}
\end{equation}%
Here%
\begin{equation}
a_{2,n}=\frac{n}{\left\vert \mathbb{S}^{n-1}\right\vert }\left( \frac{4\pi ^{%
\frac{n}{2}}}{\Gamma \left( \frac{n-2}{2}\right) }\right) ^{\frac{n}{n-2}}
\label{eq3.8}
\end{equation}
\end{proposition}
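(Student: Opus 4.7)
The plan is to mirror the proof of Proposition 1.1 with first-order quantities replaced by second-order ones: $\|\nabla\cdot\|_{L^n}$ becomes $\|\Delta\cdot\|_{L^{n/2}}$, and Moser's inequality (1.4) is replaced by Adams' inequality (3.3). After a linear change of coordinates (and shrinking $R$) assume $g_{ij}(0)=\delta_{ij}$. Fix $p_1\in(p,\sigma(\{0\})^{-2/(n-2)})$ and $\varepsilon>0$ small enough that $(1+\varepsilon)\sigma(\{0\})<1/p_1^{(n-2)/2}$ and $(1+\varepsilon)p<p_1$. Set $v_i=u_i-u$; since $v_i\rightharpoonup 0$ in $W^{2,n/2}(B_R)$, Rellich--Kondrachov gives $v_i\to 0$ strongly in $W^{1,n/2}(B_R)$.

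The central step is a localized energy estimate. For a cutoff $\varphi\in C_c^\infty(B_{R_1})$ with $R_1<R$ small and $\varphi|_{B_r}=1$, expand
$$\Delta(\varphi v_i)=\varphi\Delta v_i+2\nabla\varphi\cdot\nabla v_i+v_i\Delta\varphi.$$
The last two terms are controlled by $\|v_i\|_{W^{1,n/2}}\to 0$, so their $L^{n/2}$ contribution is $o(1)$. Writing $\Delta u_i=\Delta_g u_i+(\delta^{ij}-g^{ij})\partial_i\partial_j u_i - b^k\partial_k u_i$ with $b^k\in C^0$ coming from the $C^1$ regularity of $g$, one gets
$$\|\varphi\Delta u_i\|_{L^{n/2}(dx)}^{n/2}\leq (1+\tfrac{\varepsilon}{3})\|\varphi\Delta_g u_i\|_{L^{n/2}(d\mu)}^{n/2}+c(\varepsilon)\bigl(\sup_{B_{R_1}}|g-I|\bigr)^{n/2}\|D^2 u_i\|_{L^{n/2}}^{n/2}+c(\varepsilon)\|\nabla u_i\|_{L^{n/2}(B_{R_1})}^{n/2},$$
where the conversion between $dx$ and $d\mu=\sqrt{|g|}\,dx$ is paid for by the factor $(1+\varepsilon/3)$ since $\sqrt{|g|}(0)=1$. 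Combining these estimates with triangle inequalities for $v_i=u_i-u$ yields
$$\|\Delta(\varphi v_i)\|_{L^{n/2}}^{n/2}\leq(1+\varepsilon)\|\varphi\Delta_g u_i\|_{L^{n/2}(d\mu)}^{n/2}+c(\varepsilon)\|\varphi\Delta u\|_{L^{n/2}}^{n/2}+o(1).$$
Taking $\limsup_{i\to\infty}$ and invoking the defect measure convergence, one arrives at
$$\limsup_{i\to\infty}\|\Delta(\varphi v_i)\|_{L^{n/2}}^{n/2}\leq(1+\varepsilon)\int_{B_R}\varphi^{n/2}\,d\sigma+c(\varepsilon)\int_{B_R}\varphi^{n/2}\bigl(|\Delta_g u|^{n/2}+|\Delta u|^{n/2}\bigr)\,d\mu.$$
Because $(1+\varepsilon)\sigma(\{0\})<1/p_1^{(n-2)/2}$ and the other measures are absolutely continuous, one can choose $\varphi$ supported sufficiently close to $0$ (still $\equiv 1$ near $0$) to force the right-hand side below $1/p_1^{(n-2)/2}$, hence $\|\Delta(\varphi v_i)\|_{L^{n/2}}^{n/(n-2)}<1/p_1$ for all $i$ large.

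Since $\varphi v_i\in W_0^{2,n/2}(B_R)$, Adams' inequality (3.3) gives
$$\int_{B_r}e^{a_{2,n}p_1|v_i|^{n/(n-2)}}\,d\mu\leq C\int_{B_R}e^{a_{2,n}|\varphi v_i|^{n/(n-2)}/\|\Delta(\varphi v_i)\|_{L^{n/2}}^{n/(n-2)}}\,dx\leq C.$$
Finally, from $|u_i|^{n/(n-2)}\leq(1+\varepsilon)|v_i|^{n/(n-2)}+c(\varepsilon)|u|^{n/(n-2)}$ together with $(1+\varepsilon)p<p_1$, H\"older's inequality combined with Lemma 3.1 (applied to $u$) turns the uniform bound on $e^{a_{2,n}p_1|v_i|^{n/(n-2)}}$ into the desired uniform bound on $e^{a_{2,n}p|u_i|^{n/(n-2)}}$ in $L^1(B_r,d\mu)$.

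The principal obstacle is the second-order nature of the correction $\Delta-\Delta_g$: unlike the first-order setting of Proposition 1.1, where $|\nabla u|$ and $|\nabla_g u|_g$ differ essentially by a factor close to $1$, here the correction involves the full Hessian $(\delta^{ij}-g^{ij})\partial_i\partial_j u_i$ together with a genuine first-order drift $b^k\partial_k u_i$ whose coefficients are only continuous. Controlling the Hessian term requires the pointwise smallness $\sup_{B_{R_1}}|g-I|\to 0$ as $R_1\to 0$ together with the uniform $W^{2,n/2}$-bound on $u_i$, while the drift term is absorbed using Rellich compactness of $u_i$ in $W^{1,n/2}$ and absolute continuity of $\|\nabla u\|_{L^{n/2}}^{n/2}$ on small balls. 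This coupling is precisely what makes the $C^1$ hypothesis on $g$ natural for the argument.
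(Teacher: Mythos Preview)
Your approach is essentially the paper's: localize with a cutoff $\varphi$, compare $\Delta(\varphi v_i)$ to $\varphi\Delta_g u_i$, invoke Adams' inequality (3.3) on $\varphi v_i$, then finish with H\"older and Lemma~\ref{lem3.1}. The one substantive difference is how the Hessian correction $(\delta^{ij}-g^{ij})\partial_{ij}$ is absorbed. The paper invokes the $W^{2,n/2}$ elliptic estimate $\|v\|_{W^{2,n/2}}\leq c_1\|\Delta_g v\|_{L^{n/2}}$ for $v\in W_0^{2,n/2}(B_R)$ (equation~(\ref{eq3.9})), applied to $v=\varphi v_i$, to re-absorb $\varepsilon_1\|D^2(\varphi v_i)\|$ \emph{multiplicatively} into $(1+c_1\varepsilon_1)\|\Delta_g(\varphi v_i)\|$. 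You instead bound it \emph{additively} by $\varepsilon_1\cdot\sup_i\|D^2 u_i\|_{L^{n/2}}$, relying on the uniform $W^{2,n/2}$ bound coming from weak convergence. Both routes are valid; yours avoids citing $L^p$ elliptic theory, while the paper's keeps the estimate self-contained in $\varphi v_i$.

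One caution on your write-up: the displayed intermediate estimate
\[
\|\Delta(\varphi v_i)\|_{L^{n/2}}^{n/2}\leq(1+\varepsilon)\|\varphi\Delta_g u_i\|_{L^{n/2}(d\mu)}^{n/2}+c(\varepsilon)\|\varphi\Delta u\|_{L^{n/2}}^{n/2}+o(1)
\]
is not literally correct if $o(1)$ means $i\to\infty$, since the Hessian term $c(\varepsilon)\varepsilon_1^{n/2}\|D^2 u_i\|^{n/2}$ does not vanish in $i$. It should appear explicitly as an extra term $c(\varepsilon)\varepsilon_1^{n/2}C^{n/2}$ (with $C=\sup_i\|u_i\|_{W^{2,n/2}}$) that is made small by shrinking $R_1$ \emph{before} fixing $\varphi$. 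Your final paragraph shows you understand this mechanism, so this is a presentational issue rather than a mathematical gap.
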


By a linear changing of variable and shrinking $R$ if necessary we can
assume $g=g_{ij}dx_{i}dx_{j}$ with $g_{ij}\left( 0\right) =\delta _{ij}$. Let%
\begin{equation*}
G=\det \left[ g_{ij}\right] _{1\leq i,j\leq n};\quad \left[ g^{ij}\right]
_{1\leq i,j\leq n}=\left[ g_{ij}\right] _{1\leq i,j\leq n}^{-1}.
\end{equation*}%
If $v$ is a function on $B_{R}$, then%
\begin{eqnarray*}
\Delta _{g}v &=&\frac{1}{\sqrt{G}}\partial _{i}\left( \sqrt{G}g^{ij}\partial
_{j}v\right)  \\
&=&g^{ij}\partial _{ij}v+\partial _{i}g^{ij}\partial _{j}v+g^{ij}\partial
_{i}\log \sqrt{G}\partial _{j}v.
\end{eqnarray*}%
For $v\in W_{0}^{2,\frac{n}{2}}\left( B_{R}\right) $, then standard elliptic
theory (see \cite{GiT}) tells us%
\begin{equation}
\left\Vert v\right\Vert _{W^{2,\frac{n}{2}}\left( B_{R}\right) }\leq
c_{1}\left\Vert \Delta _{g}v\right\Vert _{L^{\frac{n}{2}}\left( B_{R}\right)
}.  \label{eq3.9}
\end{equation}

On the other hand, let $0<R_{1}<R$ be a small number and $v\in W_{0}^{2,%
\frac{n}{2}}\left( B_{R_{1}}\right) $, we have%
\begin{eqnarray*}
\Delta v &=&\left( \delta _{ij}-g^{ij}\right) \partial _{ij}v+g^{ij}\partial
_{ij}v \\
&=&\Delta _{g}v+\left( \delta _{ij}-g^{ij}\right) \partial _{ij}v-\partial
_{i}g^{ij}\partial _{j}v-g^{ij}\partial _{i}\log \sqrt{G}\partial _{j}v.
\end{eqnarray*}%
It follows that%
\begin{equation}
\left\vert \Delta v\right\vert \leq \left\vert \Delta _{g}v\right\vert
+\varepsilon _{1}\left\vert D^{2}v\right\vert +c\left\vert \nabla
v\right\vert .  \label{eq3.10}
\end{equation}%
Here $\varepsilon _{1}=\varepsilon _{1}\left( R_{1}\right) >0$. Moreover $%
\varepsilon _{1}\rightarrow 0$ as $R_{1}\rightarrow 0^{+}$. We have%
\begin{eqnarray}
&&\left\Vert \Delta v\right\Vert _{L^{\frac{n}{2}}\left( B_{R}\right) }^{%
\frac{n}{2}}  \label{eq3.11} \\
&\leq &\left( \left\Vert \Delta _{g}v\right\Vert _{L^{\frac{n}{2}%
}}+\varepsilon _{1}\left\Vert D^{2}v\right\Vert _{L^{\frac{n}{2}%
}}+c\left\Vert \nabla v\right\Vert _{L^{\frac{n}{2}}}\right) ^{\frac{n}{2}} 
\notag \\
&\leq &\left[ \left( 1+c_{1}\varepsilon _{1}\right) \left\Vert \Delta
_{g}v\right\Vert _{L^{\frac{n}{2}}}+c\left\Vert \nabla v\right\Vert _{L^{%
\frac{n}{2}}}\right] ^{\frac{n}{2}}  \notag \\
&\leq &\left( 1+c\varepsilon _{1}\right) \left\Vert \Delta _{g}v\right\Vert
_{L^{\frac{n}{2}}}^{\frac{n}{2}}+c\left( \varepsilon _{1}\right) \left\Vert
\nabla v\right\Vert _{L^{\frac{n}{2}}}^{\frac{n}{2}}.  \notag
\end{eqnarray}

To continue, we fix $p_{1}\in \left( p,\sigma \left( \left\{ 0\right\}
\right) ^{-\frac{2}{n-2}}\right) $, then%
\begin{equation}
\sigma \left( \left\{ 0\right\} \right) <\frac{1}{p_{1}^{\frac{n-2}{2}}}.
\label{eq3.12}
\end{equation}%
We can find $\varepsilon >0$ s.t.%
\begin{equation}
\left( 1+\varepsilon \right) \sigma \left( \left\{ 0\right\} \right) <\frac{1%
}{p_{1}^{\frac{n-2}{2}}}  \label{eq3.13}
\end{equation}%
and%
\begin{equation}
\left( 1+\varepsilon \right) p<p_{1}.  \label{eq3.14}
\end{equation}%
Let $v_{i}=u_{i}-u$, then $v_{i}\rightharpoonup 0$ weakly in $W^{2,\frac{n}{2%
}}\left( B_{R}\right) $ and $v_{i}\rightarrow 0$ in $W^{1,\frac{n}{2}}\left(
B_{R}\right) $. Let $0<R_{1}<R$ be a small number to be determined. For $%
\varphi \in C_{c}^{\infty }\left( B_{R_{1}}\right) $, by the previous
discussion we have%
\begin{eqnarray*}
&&\left\Vert \Delta \left( \varphi v_{i}\right) \right\Vert _{L^{\frac{n}{2}%
}\left( B_{R}\right) }^{\frac{n}{2}} \\
&\leq &\left( 1+c\varepsilon _{1}\right) \left\Vert \Delta _{g}\left(
\varphi v_{i}\right) \right\Vert _{L^{\frac{n}{2}}}^{\frac{n}{2}}+c\left(
\varepsilon _{1}\right) \left\Vert \nabla \left( \varphi v_{i}\right)
\right\Vert _{L^{\frac{n}{2}}}^{\frac{n}{2}} \\
&\leq &\left( 1+c\varepsilon _{1}\right) \left( \left\Vert \varphi \Delta
_{g}v_{i}\right\Vert _{L^{\frac{n}{2}}}+c\left\Vert \varphi \right\Vert
_{W^{2,\infty }}\left\Vert v_{i}\right\Vert _{W^{1,\frac{n}{2}}}\right) ^{%
\frac{n}{2}}+c\left( \varepsilon _{1}\right) \left\Vert \nabla \left(
\varphi v_{i}\right) \right\Vert _{L^{\frac{n}{2}}}^{\frac{n}{2}} \\
&\leq &\left( 1+\frac{\varepsilon }{4}\right) \left\Vert \varphi \Delta
_{g}v_{i}\right\Vert _{L^{\frac{n}{2}}}^{\frac{n}{2}}+c\left( \varepsilon
\right) \left\Vert \varphi \right\Vert _{W^{2,\infty }}^{\frac{n}{2}%
}\left\Vert v_{i}\right\Vert _{W^{1,\frac{n}{2}}}^{\frac{n}{2}} \\
&\leq &\left( 1+\frac{\varepsilon }{4}\right) \left( \left\Vert \varphi
\Delta _{g}u_{i}\right\Vert _{L^{\frac{n}{2}}}+\left\Vert \varphi \Delta
_{g}u\right\Vert _{L^{\frac{n}{2}}}\right) ^{\frac{n}{2}}+c\left(
\varepsilon \right) \left\Vert \varphi \right\Vert _{W^{2,\infty }}^{\frac{n%
}{2}}\left\Vert v_{i}\right\Vert _{W^{1,\frac{n}{2}}}^{\frac{n}{2}} \\
&\leq &\left( 1+\frac{\varepsilon }{2}\right) \left\Vert \varphi \Delta
_{g}u_{i}\right\Vert _{L^{\frac{n}{2}}}^{\frac{n}{2}}+c\left( \varepsilon
\right) \left\Vert \varphi \Delta _{g}u\right\Vert _{L^{\frac{n}{2}}}^{\frac{%
n}{2}}+c\left( \varepsilon \right) \left\Vert \varphi \right\Vert
_{W^{2,\infty }}^{\frac{n}{2}}\left\Vert v_{i}\right\Vert _{W^{1,\frac{n}{2}%
}}^{\frac{n}{2}} \\
&\leq &\left( 1+\varepsilon \right) \left\Vert \varphi \Delta
_{g}u_{i}\right\Vert _{L^{\frac{n}{2}}\left( B_{R},d\mu \right) }^{\frac{n}{2%
}}+c\left( \varepsilon \right) \left\Vert \varphi \Delta _{g}u\right\Vert
_{L^{\frac{n}{2}}\left( B_{R},d\mu \right) }^{\frac{n}{2}}+c\left(
\varepsilon \right) \left\Vert \varphi \right\Vert _{W^{2,\infty }}^{\frac{n%
}{2}}\left\Vert v_{i}\right\Vert _{W^{1,\frac{n}{2}}}^{\frac{n}{2}}
\end{eqnarray*}%
if $R_{1}$ is small enough. Hence%
\begin{eqnarray*}
&&\lim \sup_{i\rightarrow \infty }\left\Vert \Delta \left( \varphi
v_{i}\right) \right\Vert _{L^{\frac{n}{2}}\left( B_{R}\right) }^{\frac{n}{2}}
\\
&\leq &\left( 1+\varepsilon \right) \int_{B_{R}}\left\vert \varphi
\right\vert ^{\frac{n}{2}}d\sigma +\left( 1+\varepsilon \right)
\int_{B_{R}}\left\vert \varphi \right\vert ^{\frac{n}{2}}\left\vert \Delta
_{g}u\right\vert ^{\frac{n}{2}}d\mu +c\left( \varepsilon \right) \left\Vert
\varphi \Delta _{g}u\right\Vert _{L^{\frac{n}{2}}\left( B_{R},d\mu \right)
}^{\frac{n}{2}} \\
&\leq &\left( 1+\varepsilon \right) \int_{B_{R}}\left\vert \varphi
\right\vert ^{\frac{n}{2}}d\sigma +c\left( \varepsilon \right)
\int_{B_{R}}\left\vert \varphi \right\vert ^{\frac{n}{2}}\left\vert \Delta
_{g}u\right\vert ^{\frac{n}{2}}d\mu .
\end{eqnarray*}%
Since $\left( 1+\varepsilon \right) \sigma \left( \left\{ 0\right\} \right) <%
\frac{1}{p_{1}^{\frac{n-2}{2}}}$, we can choose $\varphi \in C_{c}^{\infty
}\left( B_{R_{1}}\right) $ such that $\left. \varphi \right\vert _{B_{r}}=1$
for some $r>0$ and%
\begin{equation}
\left( 1+\varepsilon \right) \int_{B_{R}}\left\vert \varphi \right\vert ^{%
\frac{n}{2}}d\sigma +c\left( \varepsilon \right) \int_{B_{R}}\left\vert
\varphi \right\vert ^{\frac{n}{2}}\left\vert \Delta _{g}u\right\vert ^{\frac{%
n}{2}}d\mu <\frac{1}{p_{1}^{\frac{n-2}{2}}}.  \label{eq3.15}
\end{equation}%
Hence for $i$ large enough, we have%
\begin{equation}
\left\Vert \Delta \left( \varphi v_{i}\right) \right\Vert _{L^{\frac{n}{2}%
}\left( B_{R}\right) }^{\frac{n}{2}}<\frac{1}{p_{1}^{\frac{n-2}{2}}}.
\label{eq3.16}
\end{equation}%
This implies%
\begin{equation}
\left\Vert \Delta \left( \varphi v_{i}\right) \right\Vert _{L^{\frac{n}{2}%
}\left( B_{R}\right) }^{\frac{n}{n-2}}<\frac{1}{p_{1}}.  \label{eq3.17}
\end{equation}%
On the other hand,%
\begin{equation*}
\left\vert u_{i}\right\vert ^{\frac{n}{n-2}}\leq \left( \left\vert
v_{i}\right\vert +\left\vert u\right\vert \right) ^{\frac{n}{n-2}}\leq
\left( 1+\varepsilon \right) \left\vert v_{i}\right\vert ^{\frac{n}{n-2}%
}+c\left( \varepsilon \right) \left\vert u\right\vert ^{\frac{n}{n-2}},
\end{equation*}%
hence%
\begin{equation*}
e^{a_{2,n}\left\vert u_{i}\right\vert ^{\frac{n}{n-2}}}\leq e^{\left(
1+\varepsilon \right) a_{2,n}\left\vert v_{i}\right\vert ^{\frac{n}{n-2}%
}}e^{c\left( \varepsilon \right) \left\vert u\right\vert ^{\frac{n}{n-2}}}.
\end{equation*}%
It follows from (\ref{eq3.3}) that%
\begin{equation*}
\int_{B_{r}}e^{a_{2,n}p_{1}\left\vert v_{i}\right\vert ^{\frac{n}{n-2}}}d\mu
\leq c\int_{B_{R}}e^{a_{2,n}p_{1}\left\vert \varphi v_{i}\right\vert ^{\frac{%
n}{n-2}}}dx\leq c\int_{B_{R}}e^{a_{2,n}\frac{\left\vert \varphi
v_{i}\right\vert ^{\frac{n}{n-2}}}{\left\Vert \nabla \left( \varphi
v_{i}\right) \right\Vert _{L^{n}}^{\frac{n}{n-2}}}}dx\leq c.
\end{equation*}%
Using $p<\frac{p_{1}}{1+\varepsilon }$ and Lemma \ref{lem3.1}, it follows
from Holder's inequality that $e^{a_{2,n}\left\vert u_{i}\right\vert ^{\frac{%
n}{n-2}}}$ is bounded in $L^{p}\left( B_{r},d\mu \right) $. This finishes
the proof of Proposition \ref{prop3.1}.

\begin{theorem}
\label{thm3.1}Let $M^{n}$ be a $C^{2}$ compact manifold with a $C^{1}$
Riemannian metric $g$. If $E\subset M$ with $\mu \left( E\right) \geq \delta
>0$, $u\in W^{2,\frac{n}{2}}\left( M\right) \backslash \left\{ 0\right\} $
with $u_{E}=0$, then for any $0<a<a_{2,n}=\frac{n}{\left\vert \mathbb{S}%
^{n-1}\right\vert }\left( \frac{4\pi ^{\frac{n}{2}}}{\Gamma \left( \frac{n-2%
}{2}\right) }\right) ^{\frac{n}{n-2}}$,%
\begin{equation}
\int_{M}e^{a\frac{\left\vert u\right\vert ^{\frac{n}{n-2}}}{\left\Vert
\Delta u\right\Vert _{L^{\frac{n}{2}}}^{\frac{n}{n-2}}}}d\mu \leq c\left(
a,\delta \right) <\infty .  \label{eq3.18}
\end{equation}
\end{theorem}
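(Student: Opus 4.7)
The proof will follow the contradiction/concentration-compactness template used for Theorems 1.1 and 2.1 in the excerpt, with Proposition 3.1 playing the role that Propositions 1.1 and 2.1 played there. The regularity assumption ($C^2$ manifold, $C^1$ metric) is what enables the global $L^{n/2}$ elliptic regularity estimate we need.

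First I would establish a Poincaré-type bound: for every $v\in W^{2,n/2}(M)$ with $v_E=0$,
\[
\|v\|_{W^{2,n/2}(M)}\le c(\delta)\,\|\Delta v\|_{L^{n/2}(M)}.
\]
To see this, write $w=v-v_M$, so $w_M=0$. Standard elliptic regularity for the Laplace-Beltrami operator (using $g\in C^1$) gives $\|w\|_{W^{2,n/2}}\le c\|\Delta w\|_{L^{n/2}}=c\|\Delta v\|_{L^{n/2}}$. Then, as in Theorem 1.1, Hölder's inequality on $E$ and $v_E=0$ yield $|v_M|=|v_M-v_E|\le c(\delta)\|w\|_{L^{n/2}}$, closing the estimate.

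Next, suppose (\ref{eq3.18}) fails. Then there exist sequences $u_i\in W^{2,n/2}(M)$, $E_i\subset M$ with $\mu(E_i)\ge\delta$, $u_{i,E_i}=0$ and $\|\Delta u_i\|_{L^{n/2}}=1$, yet
\[
\int_M e^{a|u_i|^{\frac{n}{n-2}}}\,d\mu\to\infty.
\]
By the Poincaré-type bound above, $u_i$ is bounded in $W^{2,n/2}(M)$. Passing to a subsequence, $u_i\rightharpoonup u$ weakly in $W^{2,n/2}(M)$, strongly in $W^{1,n/2}(M)$, and
\[
|\Delta_g u_i|^{\frac{n}{2}}\,d\mu \to |\Delta_g u|^{\frac{n}{2}}\,d\mu+\sigma
\]
as Radon measures on $M$, with $\sigma(M)\le 1$. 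In particular $\sigma(\{x\})\le 1$ for every $x\in M$, hence
\[
\frac{a}{a_{2,n}}<1\le \sigma(\{x\})^{-\frac{2}{n-2}}.
\]

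Now I fix $x\in M$, pick a coordinate chart $(U,\phi)$ around $x$ identified with a small Euclidean ball $B_R$, and pull back $g$. Proposition 3.1 (whose hypotheses are exactly what we have locally, after restricting $u_i$ to the chart) produces $r>0$ such that
\[
\sup_i\int_{B_r}e^{a|u_i|^{\frac{n}{n-2}}}\,d\mu<\infty.
\]
Since $M$ is compact, finitely many such neighborhoods cover it, and summing gives a uniform bound on $\int_M e^{a|u_i|^{n/(n-2)}}d\mu$, contradicting the assumption.

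The main technical obstacle I anticipate is the Poincaré-type bound rather than the concentration-compactness step: bootstrapping from $\|\Delta u\|_{L^{n/2}}$ control to full $W^{2,n/2}$ control uses Calderón-Zygmund estimates for the Laplace-Beltrami operator with only $C^1$ coefficients, together with the two-step argument (first mean-zero reduction via $v-v_M$, then conversion between $v_M=0$ and $v_E=0$ via the Hölder trick on $E$). Everything else is a direct transcription of the first-order argument, with $n/(n-1)$, $a_n$, $\|\nabla u\|_{L^n}$ replaced throughout by $n/(n-2)$, $a_{2,n}$, $\|\Delta u\|_{L^{n/2}}$.
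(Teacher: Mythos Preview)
Your proposal is correct and follows essentially the same route as the paper's proof: a Poincar\'e-type reduction (mean-zero elliptic estimate plus the H\"older trick converting $v_M=0$ to $v_E=0$), then contradiction via a bounded sequence, extraction of a defect measure $\sigma$ with $\sigma(M)\le 1$, and a local application of Proposition~3.1 together with a finite-cover argument. The only cosmetic difference is that the paper first bounds $\|v-v_E\|_{L^{n/2}}$ and then invokes the global elliptic estimate $\|u_i\|_{W^{2,n/2}}\le c(\|\Delta u_i\|_{L^{n/2}}+\|u_i\|_{L^{n/2}})$, whereas you package these two steps into a single $W^{2,n/2}$ Poincar\'e bound.
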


\begin{proof}
For any $v\in W^{2,\frac{n}{2}}\left( M\right) $, standard elliptic theory
(see \cite{GiT}) and compactness argument tells us%
\begin{equation*}
\left\Vert v-v_{M}\right\Vert _{L^{\frac{n}{2}}}\leq c\left\Vert \Delta
v\right\Vert _{L^{\frac{n}{2}}}.
\end{equation*}%
On the other hand,%
\begin{equation*}
\left\Vert v-v_{E}\right\Vert _{L^{\frac{n}{2}}}\leq \frac{c}{\mu \left(
E\right) ^{\frac{2}{n}}}\left\Vert v\right\Vert _{L^{\frac{n}{2}}}\leq
c\left( \delta \right) \left\Vert v\right\Vert _{L^{\frac{n}{2}}}.
\end{equation*}%
Replacing $v$ by $v-v_{M}$, we see%
\begin{equation*}
\left\Vert v-v_{E}\right\Vert _{L^{\frac{n}{2}}}\leq c\left( \delta \right)
\left\Vert v-v_{M}\right\Vert _{L^{\frac{n}{2}}}\leq c\left( \delta \right)
\left\Vert \Delta v\right\Vert _{L^{\frac{n}{2}}}.
\end{equation*}

If (\ref{eq3.18}) is not true, then we can find a sequence $u_{i}\in W^{2,%
\frac{n}{2}}\left( M\right) $, $E_{i}\subset M$ with $\mu \left(
E_{i}\right) \geq \delta $, $u_{i,E_{i}}=0$, $\left\Vert \Delta
u_{i}\right\Vert _{L^{\frac{n}{2}}}=1$ and%
\begin{equation*}
\int_{M}e^{a\left\vert u_{i}\right\vert ^{\frac{n}{n-2}}}d\mu \rightarrow
\infty
\end{equation*}%
as $i\rightarrow \infty $. Since%
\begin{equation*}
\left\Vert u_{i}\right\Vert _{L^{\frac{n}{2}}}=\left\Vert
u_{i}-u_{i,E_{i}}\right\Vert _{L^{\frac{n}{2}}}\leq c\left( \delta \right)
\left\Vert \Delta u_{i}\right\Vert _{L^{\frac{n}{2}}}=c\left( \delta \right)
,
\end{equation*}%
standard elliptic estimate (see \cite{GiT}) gives us%
\begin{equation*}
\left\Vert u_{i}\right\Vert _{W^{2,\frac{n}{2}}}\leq c\left( \left\Vert
\Delta u_{i}\right\Vert _{L^{\frac{n}{2}}}+\left\Vert u_{i}\right\Vert _{L^{%
\frac{n}{2}}}\right) \leq c\left( \delta \right) .
\end{equation*}%
Hence $u_{i}$ is bounded in $W^{2,\frac{n}{2}}\left( M\right) $. After
passing to a subsequence we can find $u\in W^{2,\frac{n}{2}}\left( M\right) $
such that $u_{i}\rightharpoonup u$ weakly in $W^{2,\frac{n}{2}}\left(
M\right) $ and a measure on $M$, $\sigma $ such that%
\begin{equation*}
\left\vert \Delta u_{i}\right\vert ^{\frac{n}{2}}d\mu \rightarrow \left\vert
\Delta u\right\vert ^{\frac{n}{2}}d\mu +\sigma
\end{equation*}%
as measure. Note that $\sigma \left( M\right) \leq 1$. For any $x\in M$,
since%
\begin{equation*}
0<\frac{a}{a_{2,n}}<\sigma \left( \left\{ x\right\} \right) ^{-\frac{2}{n-2}%
},
\end{equation*}%
it follows from Proposition \ref{prop3.1} that for some $r>0$, we have%
\begin{equation*}
\sup_{i}\int_{B_{r}\left( x\right) }e^{a\left\vert u_{i}\right\vert ^{\frac{n%
}{n-2}}}d\mu <\infty .
\end{equation*}%
A covering argument implies%
\begin{equation*}
\sup_{i}\int_{M}e^{a\left\vert u_{i}\right\vert ^{\frac{n}{n-2}}}d\mu
<\infty .
\end{equation*}%
This contradicts with the choice of $u_{i}$.
\end{proof}

\begin{example}
\label{ex3.1}Let $M^{n}$ be a $C^{2}$ compact manifold with a $C^{1}$
Riemannian metric $g$. Denote%
\begin{equation}
\kappa _{2}\left( M,g\right) =\inf_{\substack{ u\in W^{2,\frac{n}{2}}\left(
M\right) \backslash \left\{ 0\right\}  \\ u_{M}=0}}\frac{\left\Vert \Delta
u\right\Vert _{L^{\frac{n}{2}}}}{\left\Vert u\right\Vert _{L^{\frac{n}{2}}}}.
\label{eq3.19}
\end{equation}%
Here $u_{M}=\frac{1}{\mu \left( M\right) }\int_{M}ud\mu $. Assume $0\leq
\kappa <\kappa _{2}\left( M,g\right) $, $0<a<a_{2,n}$, $u\in W^{2,\frac{n}{2}%
}\left( M\right) $ with $u_{M}=0$ and%
\begin{equation}
\left\Vert \Delta u\right\Vert _{L^{\frac{n}{2}}}^{\frac{n}{2}}-\kappa ^{%
\frac{n}{2}}\left\Vert u\right\Vert _{L^{\frac{n}{2}}}^{\frac{n}{2}}\leq 1,
\label{eq3.20}
\end{equation}%
then we have%
\begin{equation}
\int_{M}e^{a\left\vert u\right\vert ^{\frac{n}{n-2}}}d\mu \leq c\left(
\kappa ,a\right) <\infty .  \label{eq3.21}
\end{equation}
\end{example}

Since the proof of Example \ref{ex3.1} is almost identical to the proof of
Example \ref{ex1.1} (using Proposition \ref{prop3.1} when necessary), we
omit it here.

\subsection{Paneitz operator and Q curvature in dimension 4\label{sec3.1}}

Let $\left( M^{4},g\right) $ be a smooth compact Riemannian manifold with
dimension 4. The Paneitz operator is given by (see \cite{CY2, HY2})%
\begin{equation}
Pu=\Delta ^{2}u+2\func{div}\left( Rc\left( \nabla u,e_{i}\right)
e_{i}\right) -\frac{2}{3}\func{div}\left( R\nabla u\right) .  \label{eq3.22}
\end{equation}%
Here $e_{1},e_{2},e_{3},e_{4}$ is a local orthonormal frame with respect to $%
g$. The associated $Q$ curvature is%
\begin{equation}
Q=-\frac{1}{6}\Delta R-\frac{1}{2}\left\vert Rc\right\vert ^{2}+\frac{1}{6}%
R^{2}.  \label{eq3.23}
\end{equation}%
In 4-dimensional conformal geometry, $P$ and $Q$ play the same roles as $%
-\Delta $ and Gauss curvature in 2-dimensional conformal geometry.

For $u\in C^{\infty }\left( M\right) $, let%
\begin{eqnarray}
E\left( u\right) &=&\int_{M}Pu\cdot ud\mu  \label{eq3.24} \\
&=&\int_{M}\left( \left( \Delta u\right) ^{2}-2Rc\left( \nabla u,\nabla
u\right) +\frac{2}{3}R\left\vert \nabla u\right\vert ^{2}\right) d\mu . 
\notag
\end{eqnarray}%
By this formula, we know $E\left( u\right) $ still makes sense for $u\in
H^{2}\left( M\right) =W^{2,2}\left( M\right) $.

On the standard $\mathbb{S}^{4}$, $P\geq 0$ and $\ker P=\left\{ \text{%
constant functions}\right\} $. Moreover in \cite{Gu, GuV}, some general
criterion for such positivity condition to be valid were derived. If $P\geq
0 $ and $\ker P=\left\{ \text{constant functions}\right\} $, then for any $%
u\in H^{2}\left( M\right) $ with $u_{M}=0$,%
\begin{equation}
\left\Vert u\right\Vert _{L^{2}}^{2}\leq c\left( M,g\right) E\left( u\right)
.  \label{eq3.25}
\end{equation}%
On the other hand, standard elliptic theory (see \cite{GiT}) tells us%
\begin{eqnarray*}
\left\Vert u\right\Vert _{H^{2}}^{2} &\leq &c\left( M,g\right) \left(
\left\Vert \Delta u\right\Vert _{L^{2}}^{2}+\left\Vert u\right\Vert
_{L^{2}}^{2}\right) \\
&\leq &c\left( M,g\right) \left( E\left( u\right) +\left\Vert u\right\Vert
_{H^{1}}^{2}\right) \\
&\leq &c\left( M,g\right) E\left( u\right) +\frac{1}{2}\left\Vert
u\right\Vert _{H^{2}}^{2}+c\left( M,g\right) \left\Vert u\right\Vert
_{L^{2}}^{2} \\
&\leq &\frac{1}{2}\left\Vert u\right\Vert _{H^{2}}^{2}+c\left( M,g\right)
E\left( u\right) .
\end{eqnarray*}%
We have used the interpolation inequality in between. It follows that%
\begin{equation}
\left\Vert u\right\Vert _{H^{2}}^{2}\leq c\left( M,g\right) E\left( u\right)
.  \label{eq3.26}
\end{equation}

\begin{lemma}
\label{lem3.2}If $P\geq 0$, $\ker P=\left\{ \text{constant functions}%
\right\} $, $u\in H^{2}\left( M\right) \backslash \left\{ 0\right\} $ such
that $u_{M}=0$, then for any $a\in \left( 0,32\pi ^{2}\right) $,%
\begin{equation}
\int_{M}e^{a\frac{u^{2}}{E\left( u\right) }}d\mu \leq c\left( a\right)
<\infty .  \label{eq3.27}
\end{equation}%
In particular,%
\begin{equation}
\log \int_{M}e^{4u}d\mu \leq \frac{4}{a}E\left( u\right) +c\left( a\right) .
\label{eq3.28}
\end{equation}
\end{lemma}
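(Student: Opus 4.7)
The plan is to reduce (3.27) to Example 3.1 applied with $n=4$, exploiting the fact that $E(u)$ differs from $\left\Vert \Delta u\right\Vert _{L^{2}}^{2}$ only by first- and zeroth-order terms that behave continuously under $H^{1}$ convergence. Note that for $n=4$ one has $a_{2,4}=\frac{4}{\left\vert \mathbb{S}^{3}\right\vert }(4\pi ^{2})^{2}=32\pi ^{2}$, so the endpoint in (3.27) matches the one in Theorem 3.1.

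I argue by contradiction. Suppose (3.27) fails for some $a\in (0,32\pi ^{2})$; then there exist $u_{i}\in H^{2}(M)$ with $u_{i,M}=0$, $E(u_{i})=1$, and $\int_{M}e^{au_{i}^{2}}\,d\mu \to \infty $. By (3.26), $\left\Vert u_{i}\right\Vert _{H^{2}}^{2}\leq c\,E(u_{i})=c$, so after passing to a subsequence I may assume $u_{i}\rightharpoonup u$ weakly in $H^{2}(M)$ (hence $u_{M}=0$), $u_{i}\to u$ strongly in $H^{1}(M)$ by Rellich compactness, and $\left\vert \Delta u_{i}\right\vert ^{2}d\mu \to \left\vert \Delta u\right\vert ^{2}d\mu +\sigma $ weakly as measures on $M$ for some nonnegative Borel measure $\sigma $.

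The decisive step is passage to the limit in $E(u_{i})=1$. The $\left\Vert \Delta u_{i}\right\Vert _{L^{2}}^{2}$ contribution to (3.24) tends to $\left\Vert \Delta u\right\Vert _{L^{2}}^{2}+\sigma (M)$ by testing the measure convergence against the constant function $1$, while the Ricci and scalar-curvature terms in (3.24) depend only on $\nabla u_{i}$ and are continuous under strong $H^{1}$ convergence. Therefore $E(u)+\sigma (M)=1$. Since $P\geq 0$ forces $E(u)\geq 0$, this gives $\sigma (M)\leq 1$, whence $\sigma (\{x\})\leq 1$ for every $x\in M$. As $a/a_{2,4}<1\leq \sigma (\{x\})^{-1}$, Proposition 3.1 applied in a local chart at each $x$ furnishes $r_{x}>0$ with $\sup _{i}\int_{B_{r_{x}}(x)}e^{au_{i}^{2}}\,d\mu <\infty $; a finite subcover of $M$ yields $\sup _{i}\int_{M}e^{au_{i}^{2}}\,d\mu <\infty $, contradicting the choice of $u_{i}$.

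For (3.28), Young's inequality $4u\leq \frac{bu^{2}}{E(u)}+\frac{4E(u)}{b}$ (valid for any $b>0$) gives
\[ \int_{M}e^{4u}\,d\mu \leq e^{4E(u)/b}\int_{M}e^{bu^{2}/E(u)}\,d\mu ; \]
choosing $b=a\in (0,32\pi ^{2})$ and applying (3.27) bounds the right-hand integral by some $c(a)$, and taking logarithms yields (3.28). The main technical point to watch is securing $E(u)\geq 0$ from $P\geq 0$ together with the identification of $\sigma $ as a genuine global defect measure; once $\sigma (M)\leq 1$ is in hand, the rest is a routine localization of the Adams inequality via Proposition 3.1.
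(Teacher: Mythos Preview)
Your proof is correct and follows essentially the same argument as the paper: contradiction, compactness to extract a weak $H^{2}$ limit and defect measure $\sigma$, the identity $E(u)+\sigma(M)=1$ from strong $H^{1}$ convergence of the lower-order terms, the bound $\sigma(M)\leq 1$ from $E(u)\geq 0$, and then a pointwise application of Proposition~\ref{prop3.1} plus a covering argument. The derivation of (3.28) via Young's inequality is also identical; the only cosmetic discrepancy is your opening reference to Example~\ref{ex3.1}, whereas both you and the paper actually invoke Proposition~\ref{prop3.1} directly.
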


Note that $a_{2,4}=32\pi ^{2}$. In \cite[Lemma 1.6]{CY2}, it is shown that
under the same assumption as Lemma \ref{lem3.2},%
\begin{equation}
\int_{M}e^{32\pi ^{2}\frac{u^{2}}{E\left( u\right) }}d\mu \leq c\left(
M,g\right) <\infty .  \label{eq3.29}
\end{equation}%
The argument is based on asymptotic expansion formula of Green's function of 
$\sqrt{P}$ and modified Adams' argument \cite{A}. Our approach below is more
elementary. Moreover, Lemma \ref{lem3.2} is sufficient for application to Q
curvature equation in \cite[Theorem 1.2]{CY2}, which is for the case $\left(
M,g\right) $ not conformal diffeomorphic to the standard $\mathbb{S}^{4}$
(see \cite{Gu} and \cite[Proposition 1.3]{HY1}).

\begin{proof}[Proof of Lemma \protect\ref{lem3.2}]
If (\ref{eq3.27}) is not true, then there exists $u_{i}\in H^{2}\left(
M\right) $ such that $u_{i,M}=0$, $E\left( u_{i}\right) =1$ and%
\begin{equation*}
\int_{M}e^{au_{i}^{2}}d\mu \rightarrow \infty
\end{equation*}%
as $i\rightarrow \infty $. By previous discussion we see%
\begin{equation*}
\left\Vert u_{i}\right\Vert _{H^{2}}^{2}\leq c\left( M,g\right) E\left(
u_{i}\right) =c\left( M,g\right) .
\end{equation*}%
Hence after passing to a subsequence, we can find a $u\in H^{2}\left(
M\right) $ and a measure $\sigma $ on $M$ such that $u_{i}\rightharpoonup u$
weakly in $H^{2}\left( M\right) $, $u_{i}\rightarrow u$ in $H^{1}\left(
M\right) $ and%
\begin{equation*}
\left( \Delta u_{i}\right) ^{2}d\mu \rightarrow \left( \Delta u\right)
^{2}d\mu +\sigma \text{ as measure.}
\end{equation*}%
Note that%
\begin{eqnarray*}
E\left( u_{i}\right) &=&\int_{M}\left( \left( \Delta u_{i}\right)
^{2}-2Rc\left( \nabla u_{i},\nabla u_{i}\right) +\frac{2}{3}R\left\vert
\nabla u_{i}\right\vert ^{2}\right) d\mu \\
&\rightarrow &E\left( u\right) +\sigma \left( M\right) ,
\end{eqnarray*}%
we see $E\left( u\right) +\sigma \left( M\right) =1$. Using the fact $%
E\left( u\right) \geq 0$, we see $\sigma \left( M\right) \leq 1$. For any $%
x\in M$, because%
\begin{equation*}
0<\frac{a}{32\pi ^{2}}<\sigma \left( \left\{ x\right\} \right) ^{-1},
\end{equation*}%
it follows from Proposition \ref{prop3.1} that for some $r>0$,%
\begin{equation*}
\sup_{i}\int_{B_{r}\left( x\right) }e^{au_{i}^{2}}d\mu <\infty .
\end{equation*}%
A covering arguments implies%
\begin{equation*}
\sup_{i}\int_{M}e^{au_{i}^{2}}d\mu <\infty .
\end{equation*}%
This contradicts with the choice of $u_{i}$.

Since%
\begin{equation*}
4u\leq a\frac{u^{2}}{E\left( u\right) }+\frac{4E\left( u\right) }{a},
\end{equation*}%
we see%
\begin{equation*}
e^{4u}\leq e^{\frac{4E\left( u\right) }{a}}e^{a\frac{u^{2}}{E\left( u\right) 
}}.
\end{equation*}%
Hence%
\begin{equation*}
\int_{M}e^{4u}d\mu \leq c\left( a\right) e^{\frac{4E\left( u\right) }{a}}
\end{equation*}%
and%
\begin{equation*}
\log \int_{M}e^{4u}d\mu \leq \frac{4}{a}E\left( u\right) +c\left( a\right) .
\end{equation*}
\end{proof}

\subsection{Functions on manifolds with nonempty boundary\label{sec3.2}}

We will use the same notations as in Section \ref{sec2}.

\begin{lemma}
\label{lem3.3}Assume $u\in W^{2,\frac{n}{2}}\left( B_{R}^{+}\right)
\backslash \left\{ 0\right\} $ such that $u\left( x\right) =0$ for $%
\left\vert x\right\vert $ close to $R$ and $\partial _{n}u=0$ on $\Sigma
_{R} $, then%
\begin{equation}
\int_{B_{R}^{+}}e^{2^{-\frac{2}{n-2}}a_{2,n}\frac{\left\vert u\right\vert ^{%
\frac{n}{n-2}}}{\left\Vert \Delta u\right\Vert _{L^{\frac{n}{2}}\left(
B_{R}^{+}\right) }^{\frac{n}{n-2}}}}dx\leq c\left( n\right) R^{n}.
\label{eq3.30}
\end{equation}
\end{lemma}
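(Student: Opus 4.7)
The plan is to reduce (\ref{eq3.30}) to Adams' inequality (\ref{eq3.3}) by an even reflection across $\Sigma_R$, in direct analogy with the proof of Lemma~\ref{lem2.1}. Define
\begin{equation*}
v(x',x_n) = u(x', |x_n|) \quad \text{for } (x',x_n) \in B_R.
\end{equation*}
The support hypothesis on $u$ guarantees that $v$ vanishes near $\partial B_R$. The Neumann condition $\partial_n u = 0$ on $\Sigma_R$ is precisely what is needed to ensure that the even reflection preserves second order Sobolev regularity: the weak derivative $\partial_n v$ equals $\mathrm{sgn}(x_n)\,\partial_n u(x',|x_n|)$, and the only potentially singular contributions to $\partial_{in}v$ (for $i<n$) come from jumps across $\Sigma_R$ that are killed by $\partial_n u|_{\Sigma_R}=0$. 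Thus $v \in W_0^{2,n/2}(B_R)$.

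By construction $\Delta v(x) = (\Delta u)(x', |x_n|)$ almost everywhere on $B_R$, hence
\begin{equation*}
\|\Delta v\|_{L^{n/2}(B_R)}^{n/2} = 2\,\|\Delta u\|_{L^{n/2}(B_R^+)}^{n/2},
\end{equation*}
which gives
\begin{equation*}
\|\Delta v\|_{L^{n/2}(B_R)}^{n/(n-2)} = 2^{2/(n-2)}\,\|\Delta u\|_{L^{n/2}(B_R^+)}^{n/(n-2)}.
\end{equation*}
Applying (\ref{eq3.3}) to $v$ and restricting the integration to $B_R^+$ then yields
\begin{equation*}
\int_{B_R^+} \exp\!\left( 2^{-\frac{2}{n-2}} a_{2,n} \frac{|u|^{n/(n-2)}}{\|\Delta u\|_{L^{n/2}(B_R^+)}^{n/(n-2)}} \right) dx \;\leq\; \int_{B_R} \exp\!\left( a_{2,n} \frac{|v|^{n/(n-2)}}{\|\Delta v\|_{L^{n/2}(B_R)}^{n/(n-2)}} \right) dx \;\leq\; c(n) R^n,
\end{equation*}
which is exactly (\ref{eq3.30}).

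The only delicate point is verifying $v \in W_0^{2,n/2}(B_R)$; everything else is a one-line computation followed by (\ref{eq3.3}). To make this rigorous one can first assume $u \in C^\infty(\overline{B_R^+})$ with compact support in $B_R^+ \cup \Sigma_R$ and $\partial_n u = 0$ on $\Sigma_R$, so that the even reflection is $C^{1,1}$ across $\Sigma_R$ and hence automatically lies in $W_0^{2,\infty}(B_R) \subset W_0^{2,n/2}(B_R)$. The general case follows by approximating $u$ in $W^{2,n/2}(B_R^+)$ by such functions and passing to the limit in the resulting integral inequality (for instance, by first reflecting and then mollifying on $B_R$). The main obstacle is therefore a routine but slightly technical density/reflection argument; once in hand, the exponent $2^{-2/(n-2)}a_{2,n}$ on the left drops out automatically from the factor of $2$ acquired upon reflecting $\|\Delta u\|_{L^{n/2}}^{n/2}$.
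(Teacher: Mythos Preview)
Your argument is correct and follows exactly the same route as the paper: even reflection across $\Sigma_R$ to obtain $v\in W_0^{2,n/2}(B_R)$, the identity $\|\Delta v\|_{L^{n/2}(B_R)}^{n/2}=2\|\Delta u\|_{L^{n/2}(B_R^+)}^{n/2}$, and then an application of (\ref{eq3.3}). You actually supply more justification for the membership $v\in W_0^{2,n/2}(B_R)$ than the paper does.
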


\begin{proof}
For $\left\vert x\right\vert <R$, we define%
\begin{equation*}
v\left( x\right) =\left\{ 
\begin{array}{cc}
u\left( x\right) , & \text{if }x_{n}>0; \\ 
u\left( x^{\prime },-x_{n}\right) , & \text{if }x_{n}<0\text{.}%
\end{array}%
\right.
\end{equation*}%
Then $v\in W_{0}^{2,\frac{n}{2}}\left( B_{R}\right) $ with $\left\Vert
\Delta v\right\Vert _{L^{\frac{n}{2}}\left( B_{R}\right) }^{\frac{n}{2}%
}=2\left\Vert \Delta u\right\Vert _{L^{\frac{n}{2}}\left( B_{R}^{+}\right)
}^{\frac{n}{2}}$. Hence%
\begin{eqnarray*}
\int_{B_{R}^{+}}e^{2^{-\frac{2}{n-2}}a_{2,n}\frac{\left\vert u\right\vert ^{%
\frac{n}{n-2}}}{\left\Vert \Delta u\right\Vert _{L^{\frac{n}{2}}\left(
B_{R}^{+}\right) }^{\frac{n}{n-2}}}}dx &=&\int_{B_{R}^{+}}e^{a_{2,n}\frac{%
\left\vert u\right\vert ^{\frac{n}{n-2}}}{\left\Vert \Delta v\right\Vert
_{L^{\frac{n}{2}}\left( B_{R}\right) }^{\frac{n}{n-2}}}}dx \\
&\leq &\int_{B_{R}}e^{a_{2,n}\frac{\left\vert v\right\vert ^{\frac{n}{n-2}}}{%
\left\Vert \Delta v\right\Vert _{L^{\frac{n}{2}}\left( B_{R}\right) }^{\frac{%
n}{n-2}}}}dx \\
&\leq &c\left( n\right) R^{n}.
\end{eqnarray*}
\end{proof}

\begin{lemma}
\label{lem3.4}Let $u\in W^{2,\frac{n}{2}}\left( B_{R}^{+}\right) $ and $a>0$%
, then%
\begin{equation}
\int_{B_{R}^{+}}e^{a\left\vert u\right\vert ^{\frac{n}{n-2}}}dx<\infty .
\label{eq3.31}
\end{equation}
\end{lemma}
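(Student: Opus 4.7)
The strategy is to mirror the proof of Lemma 2.2, replacing the first-order extension used there with a second-order one, and then to invoke Lemma 3.1 on the extended function. The goal is to produce $\tilde u\in W^{2,n/2}(B_R)$ with $\tilde u|_{B_R^+}=u$; once this is in hand, Lemma 3.1 gives
\[
\int_{B_R^+} e^{a|u|^{\frac{n}{n-2}}}\,dx \;\leq\; \int_{B_R} e^{a|\tilde u|^{\frac{n}{n-2}}}\,dx \;<\;\infty,
\]
and we are done.

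To construct the extension, the naive even reflection $\tilde u(x',-x_n)=u(x',x_n)$ that works for $W^{1,n}$ in Lemma 2.2 is insufficient here, because it forces $\partial_n\tilde u$ to flip sign across $\Sigma_R$, which in general creates a jump that destroys $W^{2,n/2}$ regularity. Instead, I will use the higher-order (Lichtenstein) reflection
\[
\tilde u(x',x_n) \;=\; \begin{cases} u(x',x_n), & x_n\geq 0,\\[2pt] -3\,u(x',-x_n)+4\,u(x',-x_n/2), & x_n<0, \end{cases}
\]
defined for $x\in B_R$. For $x\in B_R$ with $x_n<0$, both reflected points $(x',-x_n)$ and $(x',-x_n/2)$ lie in $B_R^+$ (since the second has even smaller norm than the first, which lies in $B_R^+$), so the formula is well-posed on all of $B_R$.

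The coefficients $(-3,4)$ together with the scalings $(1,\tfrac12)$ are chosen precisely to ensure the matching conditions at $\{x_n=0\}$: the identity $-3+4=1$ gives continuity of $\tilde u$, and $3\cdot 1-4\cdot\tfrac12=1$ gives continuity of $\partial_n\tilde u$. Tangential derivatives match automatically. Hence $\tilde u$ and $\nabla\tilde u$ are continuous across $\Sigma_R$, so no distributional singularities appear in the first or second weak derivatives coming from the interface; on each half $D^2\tilde u$ is an $L^{n/2}$ combination of $D^2u$ composed with the reflection maps, bounded by a constant multiple of $\|D^2u\|_{L^{n/2}(B_R^+)}$. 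Therefore $\tilde u\in W^{2,n/2}(B_R)$ with $\tilde u|_{B_R^+}=u$, and Lemma 3.1 finishes the proof as indicated above.

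The only genuinely technical point is verifying the matching conditions for the reflection; this is elementary but is the step that must be handled with care, as it is the single difference between this argument and the simpler first-order version in Lemma 2.2.
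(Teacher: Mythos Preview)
Your proof is correct and follows exactly the same approach as the paper: extend $u$ to some $\tilde u\in W^{2,n/2}(B_R)$ and apply Lemma~\ref{lem3.1}. The paper's proof is a one-liner that simply asserts the existence of such an extension (a standard Sobolev extension fact), whereas you supply the explicit Lichtenstein reflection and verify the matching conditions; this extra detail is fine and the computations are correct.
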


\begin{proof}
We can find $\widetilde{u}\in W^{2,\frac{n}{2}}\left( B_{R}\right) $ such
that $\left. \widetilde{u}\right\vert _{B_{R}^{+}}=u$. Then it follows from
Lemma \ref{lem3.1} that%
\begin{equation*}
\int_{B_{R}^{+}}e^{a\left\vert u\right\vert ^{\frac{n}{n-2}}}dx\leq
\int_{B_{R}}e^{a\left\vert \widetilde{u}\right\vert ^{\frac{n}{n-2}%
}}dx<\infty .
\end{equation*}
\end{proof}

\begin{proposition}
\label{prop3.2}Let $0<R\leq 1$, $g$ be a $C^{1}$ Riemannian metric on $%
\overline{B_{R}^{+}}$ such that $g_{ij}\left( 0\right) =\delta _{ij}$ for $%
1\leq i,j\leq n$. Assume $u_{i}\in W^{2,\frac{n}{2}}\left( B_{R}^{+}\right) $%
, $\partial _{n}u_{i}=0$ on $\Sigma _{R}$, $u_{i}\rightharpoonup u$ weakly
in $W^{2,\frac{n}{2}}\left( B_{R}^{+}\right) $ and%
\begin{equation*}
\left\vert \Delta _{g}u_{i}\right\vert ^{\frac{n}{2}}d\mu \rightarrow
\left\vert \Delta _{g}u\right\vert ^{\frac{n}{2}}d\mu +\sigma \text{ as
measure on }B_{R}^{+}\text{.}
\end{equation*}%
If $0<p<\sigma \left( \left\{ 0\right\} \right) ^{-\frac{2}{n-2}}$, then
there exists $r>0$ such that%
\begin{equation}
\sup_{i}\int_{B_{r}^{+}}e^{2^{-\frac{2}{n-2}}a_{2,n}p\left\vert
u_{i}\right\vert ^{\frac{n}{n-2}}}d\mu <\infty .  \label{eq3.32}
\end{equation}%
Here%
\begin{equation}
a_{2,n}=\frac{n}{\left\vert \mathbb{S}^{n-1}\right\vert }\left( \frac{4\pi ^{%
\frac{n}{2}}}{\Gamma \left( \frac{n-2}{2}\right) }\right) ^{\frac{n}{n-2}}.
\label{eq3.33}
\end{equation}
\end{proposition}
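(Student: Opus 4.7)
The plan is to repeat the proof of Proposition \ref{prop3.1} on the half-ball $B_R^+$, with two adaptations: the cutoff function must be taken even in $x_n$ so that it is compatible with the Neumann condition $\partial_n u_i|_{\Sigma_R} = 0$, and Adams' inequality (\ref{eq3.3}) at the final step must be replaced by its half-space counterpart Lemma \ref{lem3.3}, which is precisely where the factor $2^{-2/(n-2)}$ enters.

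Concretely, I would fix $p_1 \in (p, \sigma(\{0\})^{-2/(n-2)})$ and $\varepsilon > 0$ small enough that $(1+\varepsilon)\sigma(\{0\}) < p_1^{-(n-2)/2}$ and $(1+\varepsilon)p < p_1$. Set $v_i = u_i - u$. Since $\partial_n u_i|_{\Sigma_R} = 0$ passes to the weak limit under $u_i \rightharpoonup u$ in $W^{2,n/2}(B_R^+)$ via trace continuity, we have $\partial_n v_i|_{\Sigma_R} = 0$. For a small $R_1 < R$, I would choose a radial bump $\varphi \in C_c^\infty(B_{R_1}^+)$ with $\varphi \equiv 1$ on $B_r^+$ for some $r > 0$; such a $\varphi$ automatically satisfies $\partial_n \varphi|_{\Sigma_{R_1}} = 0$, so $\partial_n(\varphi v_i)|_{\Sigma_{R_1}} = 0$.

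Running the chain of estimates from the proof of Proposition \ref{prop3.1} on $B_R^+$ rather than $B_R$ would give
\begin{equation*}
\limsup_{i \to \infty} \left\Vert \Delta(\varphi v_i)\right\Vert _{L^{n/2}(B_R^+)}^{n/2} \leq (1+\varepsilon)\int_{B_R^+} |\varphi|^{n/2}\, d\sigma + c(\varepsilon)\int_{B_R^+} |\varphi|^{n/2} |\Delta_g u|^{n/2}\, d\mu.
\end{equation*}
By shrinking the support of $\varphi$ (and $r$ accordingly), one forces the right-hand side to be strictly below $p_1^{-(n-2)/2}$, giving $\left\Vert \Delta(\varphi v_i)\right\Vert _{L^{n/2}}^{n/(n-2)} < p_1^{-1}$ for all large $i$. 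Since $\varphi v_i$ vanishes near $|x| = R_1$ and has vanishing normal derivative on $\Sigma_{R_1}$, Lemma \ref{lem3.3} applies to $\varphi v_i$ and yields $\int_{B_{R_1}^+} \exp(2^{-2/(n-2)} a_{2,n} p_1 |\varphi v_i|^{n/(n-2)})\, dx \leq c$. Restricting to $B_r^+$, switching from $dx$ to $d\mu$ (equivalent up to a bounded factor since $g$ is continuous with $g_{ij}(0) = \delta_{ij}$), applying $|u_i|^{n/(n-2)} \leq (1+\varepsilon)|v_i|^{n/(n-2)} + c(\varepsilon)|u|^{n/(n-2)}$ together with Lemma \ref{lem3.4} to handle the $u$-piece, and using H\"older's inequality with exponent $p_1 / [(1+\varepsilon) p] > 1$, produces (\ref{eq3.32}).

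The main point needing care is the interplay between the evenness of $\varphi$ (required so that $\partial_n(\varphi v_i) = 0$ on $\Sigma_{R_1}$, which in turn lets Lemma \ref{lem3.3} apply), its compact support in $B_{R_1}^+$, and the smallness condition forcing the right-hand side of the $\limsup$ estimate below $p_1^{-(n-2)/2}$. Fortunately all three can be achieved simultaneously by radial bumps, so the subtlety is in bookkeeping rather than in any new analytic input, and the rest of the argument is a direct transcription of Proposition \ref{prop3.1}.
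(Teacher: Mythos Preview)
Your proposal is correct and matches the paper's approach closely: radial cutoff to preserve the Neumann condition, the same $p_1,\varepsilon$ setup, the same $\limsup$ estimate, Lemma~\ref{lem3.3} in place of Adams' inequality, and Lemma~\ref{lem3.4} plus H\"older to finish.

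One point you understate: the Neumann condition $\partial_n(\varphi v_i)|_{\Sigma_{R_1}}=0$ is needed not only for Lemma~\ref{lem3.3} but already inside the ``chain of estimates from Proposition~\ref{prop3.1}.'' That chain uses the elliptic bound $\|D^2 v\|_{L^{n/2}}\le c\,\|\Delta_g v\|_{L^{n/2}}$ (equation~(\ref{eq3.9})) to absorb the $\varepsilon_1\|D^2 v\|$ term coming from $|\Delta v - \Delta_g v|$. On $B_R^+$ with no boundary condition on $\Sigma_R$ that estimate is simply false; the paper recovers it (equation~(\ref{eq3.34})) precisely by even reflection, which requires $\partial_n v|_{\Sigma_R}=0$, and then runs a slightly rearranged absorption. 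Since your $\varphi v_i$ does satisfy the Neumann condition, the step goes through for you --- but it is a second place where the evenness of $\varphi$ is genuinely used, not just bookkeeping.
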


First we observe that if $v\in W^{2,\frac{n}{2}}\left( B_{R}^{+}\right) $, $%
v\left( x\right) =0$ for $\left\vert x\right\vert $ close to $R$, $\left.
\partial _{n}v\right\vert _{\Sigma _{R}}=0$, then we have%
\begin{equation}
\left\Vert v\right\Vert _{W^{2,\frac{n}{2}}\left( B_{R}^{+}\right) }\leq
c\left\Vert \Delta v\right\Vert _{L^{\frac{n}{2}}\left( B_{R}^{+}\right) }.
\label{eq3.34}
\end{equation}%
Indeed, let%
\begin{equation*}
\widetilde{v}\left( x\right) =\left\{ 
\begin{array}{cc}
v\left( x\right) , & \text{if }x_{n}>0; \\ 
v\left( x^{\prime },-x_{n}\right) , & \text{if }x_{n}<0%
\end{array}%
\right.
\end{equation*}%
for $x\in B_{R}$, then $\widetilde{v}\in W_{0}^{2,\frac{n}{2}}\left(
B_{R}\right) $. Hence elliptic estimates gives us%
\begin{equation*}
\left\Vert v\right\Vert _{W^{2,\frac{n}{2}}\left( B_{R}^{+}\right) }\leq
\left\Vert \widetilde{v}\right\Vert _{W^{2,\frac{n}{2}}\left( B_{R}\right)
}\leq c\left\Vert \Delta \widetilde{v}\right\Vert _{L^{\frac{n}{2}}\left(
B_{R}\right) }\leq c\left\Vert \Delta v\right\Vert _{L^{\frac{n}{2}}\left(
B_{R}^{+}\right) }.
\end{equation*}

To continue, we recall (\ref{eq3.10}) that if $0<R_{1}<R$ is a small number
and $v\in W^{2,\frac{n}{2}}\left( B_{R_{1}}^{+}\right) $, $v\left( x\right)
=0$ for $\left\vert x\right\vert $ close to $R_{1}$, $\left. \partial
_{n}v\right\vert _{\Sigma _{R_{1}}}=0$, then%
\begin{equation*}
\left\vert \Delta v\right\vert \leq \left\vert \Delta _{g}v\right\vert
+\varepsilon _{1}\left\vert D^{2}v\right\vert +c\left\vert \nabla
v\right\vert ,
\end{equation*}%
$\varepsilon _{1}=\varepsilon _{1}\left( R_{1}\right) >0$ with $\varepsilon
_{1}\rightarrow 0$ as $R_{1}\rightarrow 0^{+}$. Hence%
\begin{eqnarray*}
&&\left\Vert \Delta v\right\Vert _{L^{\frac{n}{2}}\left( B_{R}^{+}\right) }^{%
\frac{n}{2}} \\
&\leq &\left( \left\Vert \Delta _{g}v\right\Vert _{L^{\frac{n}{2}%
}}+\varepsilon _{1}\left\Vert D^{2}v\right\Vert _{L^{\frac{n}{2}%
}}+c\left\Vert \nabla v\right\Vert _{L^{\frac{n}{2}}}\right) ^{\frac{n}{2}}
\\
&\leq &\left( 1+\frac{\varepsilon }{16}\right) \left\Vert \Delta
_{g}v\right\Vert _{L^{\frac{n}{2}}}^{\frac{n}{2}}+c\left( \varepsilon
\right) \varepsilon _{1}^{\frac{n}{2}}\left\Vert D^{2}v\right\Vert _{L^{%
\frac{n}{2}}}^{\frac{n}{2}}+c\left( \varepsilon \right) \left\Vert \nabla
v\right\Vert _{L^{\frac{n}{2}}}^{\frac{n}{2}} \\
&\leq &\left( 1+\frac{\varepsilon }{16}\right) \left\Vert \Delta
_{g}v\right\Vert _{L^{\frac{n}{2}}}^{\frac{n}{2}}+c\left( \varepsilon
\right) \varepsilon _{1}^{\frac{n}{2}}\left\Vert \Delta v\right\Vert _{L^{%
\frac{n}{2}}}^{\frac{n}{2}}+c\left( \varepsilon \right) \left\Vert \nabla
v\right\Vert _{L^{\frac{n}{2}}}^{\frac{n}{2}}.
\end{eqnarray*}%
It follows that%
\begin{eqnarray*}
\left\Vert \Delta v\right\Vert _{L^{\frac{n}{2}}\left( B_{R}^{+}\right) }^{%
\frac{n}{2}} &\leq &\frac{1+\frac{\varepsilon }{16}}{1-c\left( \varepsilon
\right) \varepsilon _{1}^{\frac{n}{2}}}\left\Vert \Delta _{g}v\right\Vert
_{L^{\frac{n}{2}}}^{\frac{n}{2}}+\frac{c\left( \varepsilon \right) }{%
1-c\left( \varepsilon \right) \varepsilon _{1}^{\frac{n}{2}}}\left\Vert
\nabla v\right\Vert _{L^{\frac{n}{2}}}^{\frac{n}{2}} \\
&\leq &\left( 1+\frac{\varepsilon }{8}\right) \left\Vert \Delta
_{g}v\right\Vert _{L^{\frac{n}{2}}}^{\frac{n}{2}}+c\left( \varepsilon
\right) \left\Vert \nabla v\right\Vert _{L^{\frac{n}{2}}}^{\frac{n}{2}}
\end{eqnarray*}%
if $R_{1}$ is small enough.

\begin{proof}[Proof of Proposition \protect\ref{prop3.2}]
Fix $p_{1}\in \left( p,\sigma \left( \left\{ 0\right\} \right) ^{-\frac{2}{%
n-2}}\right) $, then%
\begin{equation}
\sigma \left( \left\{ 0\right\} \right) <\frac{1}{p_{1}^{\frac{n-2}{2}}}.
\label{eq3.35}
\end{equation}%
We can find $\varepsilon >0$ s.t.%
\begin{equation}
\left( 1+\varepsilon \right) \sigma \left( \left\{ 0\right\} \right) <\frac{1%
}{p_{1}^{\frac{n-2}{2}}}  \label{eq3.36}
\end{equation}%
and%
\begin{equation}
\left( 1+\varepsilon \right) p<p_{1}.  \label{eq3.37}
\end{equation}%
Let $v_{i}=u_{i}-u$, then $v_{i}\rightharpoonup 0$ weakly in $W^{2,\frac{n}{2%
}}\left( B_{R}^{+}\right) $ and $v_{i}\rightarrow 0$ in $W^{1,\frac{n}{2}%
}\left( B_{R}^{+}\right) $. Let $0<R_{1}<R$ be small enough. For radial
symmetric $\varphi \in C_{c}^{\infty }\left( B_{R_{1}}\right) $, we have%
\begin{eqnarray*}
&&\left\Vert \Delta \left( \varphi v_{i}\right) \right\Vert _{L^{\frac{n}{2}%
}\left( B_{R}^{+}\right) }^{\frac{n}{2}} \\
&\leq &\left( 1+\frac{\varepsilon }{8}\right) \left\Vert \Delta _{g}\left(
\varphi v_{i}\right) \right\Vert _{L^{\frac{n}{2}}}^{\frac{n}{2}}+c\left(
\varepsilon \right) \left\Vert \nabla \left( \varphi v_{i}\right)
\right\Vert _{L^{\frac{n}{2}}}^{\frac{n}{2}} \\
&\leq &\left( 1+\frac{\varepsilon }{8}\right) \left( \left\Vert \varphi
\Delta _{g}v_{i}\right\Vert _{L^{\frac{n}{2}}}+c\left\Vert \varphi
\right\Vert _{W^{2,\infty }}\left\Vert v_{i}\right\Vert _{W^{1,\frac{n}{2}%
}}\right) ^{\frac{n}{2}}+c\left( \varepsilon \right) \left\Vert \nabla
\left( \varphi v_{i}\right) \right\Vert _{L^{\frac{n}{2}}}^{\frac{n}{2}} \\
&\leq &\left( 1+\frac{\varepsilon }{4}\right) \left\Vert \varphi \Delta
_{g}v_{i}\right\Vert _{L^{\frac{n}{2}}}^{\frac{n}{2}}+c\left( \varepsilon
\right) \left\Vert \varphi \right\Vert _{W^{2,\infty }}^{\frac{n}{2}%
}\left\Vert v_{i}\right\Vert _{W^{1,\frac{n}{2}}}^{\frac{n}{2}} \\
&\leq &\left( 1+\frac{\varepsilon }{4}\right) \left( \left\Vert \varphi
\Delta _{g}u_{i}\right\Vert _{L^{\frac{n}{2}}}+\left\Vert \varphi \Delta
_{g}u\right\Vert _{L^{\frac{n}{2}}}\right) ^{\frac{n}{2}}+c\left(
\varepsilon \right) \left\Vert \varphi \right\Vert _{W^{2,\infty }}^{\frac{n%
}{2}}\left\Vert v_{i}\right\Vert _{W^{1,\frac{n}{2}}}^{\frac{n}{2}} \\
&\leq &\left( 1+\frac{\varepsilon }{2}\right) \left\Vert \varphi \Delta
_{g}u_{i}\right\Vert _{L^{\frac{n}{2}}}^{\frac{n}{2}}+c\left( \varepsilon
\right) \left\Vert \varphi \Delta _{g}u\right\Vert _{L^{\frac{n}{2}}}^{\frac{%
n}{2}}+c\left( \varepsilon \right) \left\Vert \varphi \right\Vert
_{W^{2,\infty }}^{\frac{n}{2}}\left\Vert v_{i}\right\Vert _{W^{1,\frac{n}{2}%
}}^{\frac{n}{2}} \\
&\leq &\left( 1+\varepsilon \right) \left\Vert \varphi \Delta
_{g}u_{i}\right\Vert _{L^{\frac{n}{2}}\left( B_{R}^{+},d\mu \right) }^{\frac{%
n}{2}}+c\left( \varepsilon \right) \left\Vert \varphi \Delta
_{g}u\right\Vert _{L^{\frac{n}{2}}\left( B_{R}^{+},d\mu \right) }^{\frac{n}{2%
}}+c\left( \varepsilon \right) \left\Vert \varphi \right\Vert _{W^{2,\infty
}}^{\frac{n}{2}}\left\Vert v_{i}\right\Vert _{W^{1,\frac{n}{2}}}^{\frac{n}{2}%
}.
\end{eqnarray*}%
Hence%
\begin{eqnarray*}
&&\lim \sup_{i\rightarrow \infty }\left\Vert \Delta \left( \varphi
v_{i}\right) \right\Vert _{L^{\frac{n}{2}}\left( B_{R}^{+}\right) }^{\frac{n%
}{2}} \\
&\leq &\left( 1+\varepsilon \right) \int_{B_{R}^{+}}\left\vert \varphi
\right\vert ^{\frac{n}{2}}d\sigma +\left( 1+\varepsilon \right)
\int_{B_{R}^{+}}\left\vert \varphi \right\vert ^{\frac{n}{2}}\left\vert
\Delta _{g}u\right\vert ^{\frac{n}{2}}d\mu +c\left( \varepsilon \right)
\left\Vert \varphi \Delta _{g}u\right\Vert _{L^{\frac{n}{2}}\left(
B_{R}^{+},d\mu \right) }^{\frac{n}{2}} \\
&\leq &\left( 1+\varepsilon \right) \int_{B_{R}^{+}}\left\vert \varphi
\right\vert ^{\frac{n}{2}}d\sigma +c\left( \varepsilon \right)
\int_{B_{R}^{+}}\left\vert \varphi \right\vert ^{\frac{n}{2}}\left\vert
\Delta _{g}u\right\vert ^{\frac{n}{2}}d\mu .
\end{eqnarray*}%
Since $\left( 1+\varepsilon \right) \sigma \left( \left\{ 0\right\} \right) <%
\frac{1}{p_{1}^{\frac{n-2}{2}}}$, we can choose a radial function $\varphi
\in C_{c}^{\infty }\left( B_{R_{1}}\right) $ such that $\left. \varphi
\right\vert _{B_{r}}=1$ for some $r>0$ and%
\begin{equation}
\left( 1+\varepsilon \right) \int_{B_{R}^{+}}\left\vert \varphi \right\vert
^{\frac{n}{2}}d\sigma +c\left( \varepsilon \right)
\int_{B_{R}^{+}}\left\vert \varphi \right\vert ^{\frac{n}{2}}\left\vert
\Delta _{g}u\right\vert ^{\frac{n}{2}}d\mu <\frac{1}{p_{1}^{\frac{n-2}{2}}}.
\label{eq3.38}
\end{equation}%
Hence for $i$ large enough, we have%
\begin{equation}
\left\Vert \Delta \left( \varphi v_{i}\right) \right\Vert _{L^{\frac{n}{2}%
}\left( B_{R}^{+}\right) }^{\frac{n}{2}}<\frac{1}{p_{1}^{\frac{n-2}{2}}}.
\label{eq3.39}
\end{equation}%
This implies%
\begin{equation}
\left\Vert \Delta \left( \varphi v_{i}\right) \right\Vert _{L^{\frac{n}{2}%
}\left( B_{R}^{+}\right) }^{\frac{n}{n-2}}<\frac{1}{p_{1}}.  \label{eq3.40}
\end{equation}%
On the other hand,%
\begin{equation*}
\left\vert u_{i}\right\vert ^{\frac{n}{n-2}}\leq \left( \left\vert
v_{i}\right\vert +\left\vert u\right\vert \right) ^{\frac{n}{n-2}}\leq
\left( 1+\varepsilon \right) \left\vert v_{i}\right\vert ^{\frac{n}{n-2}%
}+c\left( \varepsilon \right) \left\vert u\right\vert ^{\frac{n}{n-2}},
\end{equation*}%
hence%
\begin{equation*}
e^{2^{-\frac{2}{n-2}}a_{2,n}\left\vert u_{i}\right\vert ^{\frac{n}{n-2}%
}}\leq e^{\left( 1+\varepsilon \right) 2^{-\frac{2}{n-2}}a_{2,n}\left\vert
v_{i}\right\vert ^{\frac{n}{n-2}}}e^{c\left( \varepsilon \right) \left\vert
u\right\vert ^{\frac{n}{n-2}}}.
\end{equation*}%
It follows from Lemma \ref{lem3.3} that%
\begin{eqnarray*}
\int_{B_{r}^{+}}e^{2^{-\frac{2}{n-2}}a_{2,n}p_{1}\left\vert v_{i}\right\vert
^{\frac{n}{n-2}}}d\mu &\leq &c\int_{B_{R}^{+}}e^{2^{-\frac{2}{n-2}%
}a_{2,n}p_{1}\left\vert \varphi v_{i}\right\vert ^{\frac{n}{n-2}}}dx \\
&\leq &c\int_{B_{R}^{+}}e^{2^{-\frac{2}{n-2}}a_{2,n}\frac{\left\vert \varphi
v_{i}\right\vert ^{\frac{n}{n-2}}}{\left\Vert \nabla \left( \varphi
v_{i}\right) \right\Vert _{L^{n}}^{\frac{n}{n-2}}}}dx \\
&\leq &c.
\end{eqnarray*}%
Using $p<\frac{p_{1}}{1+\varepsilon }$ and Lemma \ref{lem3.4}, it follows
from Holder's inequality that $e^{2^{-\frac{2}{n-2}}a_{2,n}\left\vert
u_{i}\right\vert ^{\frac{n}{n-2}}}$ is bounded in $L^{p}\left(
B_{r}^{+},d\mu \right) $.
\end{proof}

\begin{theorem}
\label{thm3.2}Let $\left( M^{n},g\right) $ be a $C^{4}$ compact Riemannian
manifold with boundary and $g$ be a $C^{3}$ Riemannian metric on $M$. If $%
E\subset M$ with $\mu \left( E\right) \geq \delta >0$, $u\in W^{2,\frac{n}{2}%
}\left( M\right) \backslash \left\{ 0\right\} $ with $u_{E}=0$ and $\left. 
\frac{\partial u}{\partial \nu }\right\vert _{\partial M}=0$ (here $\nu $ is
the unit outer normal direction on $\partial M$), then for any $0<a<a_{2,n}=%
\frac{n}{\left\vert \mathbb{S}^{n-1}\right\vert }\left( \frac{4\pi ^{\frac{n%
}{2}}}{\Gamma \left( \frac{n-2}{2}\right) }\right) ^{\frac{n}{n-2}}$,%
\begin{equation}
\int_{M}e^{2^{-\frac{2}{n-2}}a\frac{\left\vert u\right\vert ^{\frac{n}{n-2}}%
}{\left\Vert \Delta u\right\Vert _{L^{\frac{n}{2}}}^{\frac{n}{n-2}}}}d\mu
\leq c\left( a,\delta \right) <\infty .  \label{eq3.41}
\end{equation}
\end{theorem}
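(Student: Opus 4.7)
The plan is to mimic the arguments used for Theorems \ref{thm2.1} and \ref{thm3.1}: argue by contradiction, extract a weakly convergent subsequence, and apply Proposition \ref{prop3.1} at interior points and Proposition \ref{prop3.2} at boundary points. The essential new feature is that the Neumann hypothesis must both deliver a Poincar\'{e}-type control of $\|u\|_{L^{n/2}}$ by $\|\Delta u\|_{L^{n/2}}$ and transform into the flat condition $\partial _{n}u_{i}=0$ on $\Sigma _{R}$ demanded by Proposition \ref{prop3.2}.

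First I would record the auxiliary estimate $\|v-v_{E}\|_{W^{2,n/2}}\leq c(\delta )\|\Delta v\|_{L^{n/2}}$ for $v\in W^{2,n/2}(M)$ with $\partial v/\partial \nu |_{\partial M}=0$, by combining the Poincar\'{e} inequality $\|v-v_{M}\|_{L^{n/2}}\leq c\|\Delta v\|_{L^{n/2}}$ for the Neumann Laplacian, the trivial bound $\|v-v_{E}\|_{L^{n/2}}\leq c(\delta )\|v\|_{L^{n/2}}$ applied to $v-v_{M}$, and standard Calder\'{o}n--Zygmund theory for the Neumann problem. Then, assuming the conclusion fails, I would pick a sequence $u_{i}\in W^{2,n/2}(M)$ with $u_{i,E_{i}}=0$, $\partial u_{i}/\partial \nu |_{\partial M}=0$, $\|\Delta u_{i}\|_{L^{n/2}}=1$, and $\int_{M}e^{2^{-2/(n-2)}a|u_{i}|^{n/(n-2)}}d\mu \to \infty $. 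The above estimate bounds $\{u_{i}\}$ in $W^{2,n/2}(M)$; after extracting a subsequence $u_{i}\rightharpoonup u$ weakly in $W^{2,n/2}(M)$, $u_{i}\to u$ in $W^{1,n/2}(M)$, and $|\Delta u_{i}|^{n/2}d\mu \to |\Delta u|^{n/2}d\mu +\sigma $ as measures on $M$, with $\sigma (M)\leq 1$ and $\partial u/\partial \nu |_{\partial M}=0$ by trace continuity.

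For $x\in M\setminus \partial M$ one has $\sigma (\{x\})\leq 1<(a/a_{2,n})^{-(n-2)/2}$, so Proposition \ref{prop3.1} supplies $r>0$ with $\sup_{i}\int_{B_{r}(x)}e^{a|u_{i}|^{n/(n-2)}}d\mu <\infty $, which already dominates the weaker integrand of interest. For $x\in \partial M$, I would pass to Fermi coordinates based at $x$ (well defined and sufficiently smooth thanks to $M\in C^{4}$ and $g\in C^{3}$): $\partial M$ becomes $\Sigma _{R}$, the outer normal becomes $-\partial _{n}$, and the Neumann hypothesis reads exactly as $\partial _{n}u_{i}=0$ on $\Sigma _{R}$; an additional linear change of tangential variables arranges $g_{ij}(0)=\delta _{ij}$. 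Proposition \ref{prop3.2} then yields $\sup_{i}\int_{B_{r}^{+}(x)}e^{2^{-2/(n-2)}a|u_{i}|^{n/(n-2)}}d\mu <\infty $ near $x$. A finite covering of $M$ by such interior balls and half-ball neighborhoods contradicts the choice of $u_{i}$.

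The principal obstacle is the boundary step: one must straighten $\partial M$ in a way that both simplifies the boundary condition into $\partial _{n}u_{i}=0$ (forcing the use of Fermi, not arbitrary, coordinates) and preserves enough regularity of the metric to apply Proposition \ref{prop3.2}. The assumed $C^{4}$ regularity of $M$ and $C^{3}$ regularity of $g$ are built in precisely for this reason. Once the boundary normalization is carried out, the interior step and the initial Poincar\'{e} bound proceed essentially as in the proof of Theorem \ref{thm3.1}.
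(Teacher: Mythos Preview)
Your proposal is correct and follows essentially the same route as the paper's proof: contradiction, a Poincar\'e-type bound via the Neumann Laplacian to get $W^{2,n/2}$-boundedness, weak limit plus defect measure, Proposition~\ref{prop3.1} at interior points, and Fermi coordinates at boundary points to convert the Neumann condition into $\partial_n u_i|_{\Sigma_R}=0$ and invoke Proposition~\ref{prop3.2}. The paper arranges $g_{ij}(0)=\delta_{ij}$ by picking an orthonormal tangential frame at the base point rather than by a separate linear change of variables, but this is the same maneuver in different words.
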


\begin{proof}
For any $v\in W^{2,\frac{n}{2}}\left( M\right) $ with $\left. \frac{\partial
v}{\partial \nu }\right\vert _{\partial M}=0$, standard elliptic theory and
compactness argument tells us%
\begin{equation*}
\left\Vert v-v_{M}\right\Vert _{L^{\frac{n}{2}}\left( M\right) }\leq
c\left\Vert \Delta v\right\Vert _{L^{\frac{n}{2}}\left( M\right) }.
\end{equation*}%
On the other hand,%
\begin{equation*}
\left\Vert v-v_{E}\right\Vert _{L^{\frac{n}{2}}}\leq \frac{c}{\mu \left(
E\right) ^{\frac{2}{n}}}\left\Vert v\right\Vert _{L^{\frac{n}{2}}}\leq
c\left( \delta \right) \left\Vert v\right\Vert _{L^{\frac{n}{2}}}.
\end{equation*}%
Replacing $v$ by $v-v_{M}$, we see%
\begin{equation*}
\left\Vert v-v_{E}\right\Vert _{L^{\frac{n}{2}}}\leq c\left( \delta \right)
\left\Vert v-v_{M}\right\Vert _{L^{\frac{n}{2}}}\leq c\left( \delta \right)
\left\Vert \Delta v\right\Vert _{L^{\frac{n}{2}}}.
\end{equation*}

If (\ref{eq3.41}) is not true, then we can find a sequence $u_{i}\in W^{2,%
\frac{n}{2}}\left( M\right) $, $E_{i}\subset M$ with $\mu \left(
E_{i}\right) \geq \delta $, $u_{i,E_{i}}=0$, $\left\Vert \Delta
u_{i}\right\Vert _{L^{\frac{n}{2}}}=1$, $\left. \frac{\partial u_{i}}{%
\partial \nu }\right\vert _{\partial M}=0$ and%
\begin{equation*}
\int_{M}e^{2^{-\frac{2}{n-2}}a\left\vert u_{i}\right\vert ^{\frac{n}{n-2}%
}}d\mu \rightarrow \infty
\end{equation*}%
as $i\rightarrow \infty $. Since%
\begin{equation*}
\left\Vert u_{i}\right\Vert _{L^{\frac{n}{2}}}=\left\Vert
u_{i}-u_{i,E_{i}}\right\Vert _{L^{\frac{n}{2}}}\leq c\left( \delta \right)
\left\Vert \Delta u_{i}\right\Vert _{L^{\frac{n}{2}}}=c\left( \delta \right)
,
\end{equation*}%
standard elliptic estimate gives us%
\begin{equation*}
\left\Vert u_{i}\right\Vert _{W^{2,\frac{n}{2}}}\leq c\left( \left\Vert
\Delta u_{i}\right\Vert _{L^{\frac{n}{2}}}+\left\Vert u_{i}\right\Vert _{L^{%
\frac{n}{2}}}\right) \leq c\left( \delta \right) .
\end{equation*}%
Hence $u_{i}$ is bounded in $W^{2,\frac{n}{2}}\left( M\right) $. After
passing to a subsequence we can find $u\in W^{2,\frac{n}{2}}\left( M\right) $
such that $u_{i}\rightharpoonup u$ weakly in $W^{2,\frac{n}{2}}\left(
M\right) $ and a measure on $M$, $\sigma $ such that%
\begin{equation*}
\left\vert \Delta u_{i}\right\vert ^{\frac{n}{2}}d\mu \rightarrow \left\vert
\Delta u\right\vert ^{\frac{n}{2}}d\mu +\sigma
\end{equation*}%
as measure. Note that $\sigma \left( M\right) \leq 1$. For any $y\in
M\backslash \partial M$, since%
\begin{equation*}
0<\frac{a}{a_{2,n}}<\sigma \left( \left\{ y\right\} \right) ^{-\frac{2}{n-2}%
},
\end{equation*}%
it follows from Proposition \ref{prop3.1} that for some $r>0$, we have%
\begin{equation*}
\sup_{i}\int_{B_{r}\left( y\right) }e^{2^{-\frac{2}{n-2}}a\left\vert
u_{i}\right\vert ^{\frac{n}{n-2}}}d\mu \leq \sup_{i}\int_{B_{r}\left(
y\right) }e^{a\left\vert u_{i}\right\vert ^{\frac{n}{n-2}}}d\mu <\infty .
\end{equation*}%
Next we deal with the case $y\in \partial M$. Note for $\varepsilon _{0}>0$
small enough, we have a well defined map%
\begin{equation*}
\phi :\partial M\times \left[ 0,\varepsilon _{0}\right) \rightarrow M:\left(
\xi ,t\right) \mapsto \exp _{\xi }\left( -t\nu \left( \xi \right) \right) .
\end{equation*}%
By the compactness of $M$, we can find $\varepsilon _{0}>0$ such that $\phi
\left( \partial M\times \left[ 0,\varepsilon _{0}\right) \right) $ is open
and $\phi $ is a $C^{2}$ diffeomorphism. We write $\phi ^{-1}\left( z\right)
=\left( \xi \left( z\right) ,t\left( z\right) \right) $. Let $s_{1},\cdots
,s_{n-1}$ be a coordinate near $y$ on $\partial M$ such that $s_{i}\left(
y\right) =0$ and $\left\langle \partial _{s_{i}},\partial
_{s_{j}}\right\rangle \left( y\right) =\delta _{ij}$ for $1\leq i,j\leq n-1$%
. Then we define a coordinate $x_{1},\cdots ,x_{n}$ near $y$ as $x_{i}\left(
z\right) =s_{i}\left( \xi \left( z\right) \right) $ for $1\leq i\leq n-1$
and $x_{n}\left( z\right) =t\left( z\right) $. It is clear that $\left.
\partial _{x_{n}}\right\vert _{x_{n}=0}=-\nu $, hence we see $\left.
\partial _{n}u_{i}\right\vert _{\Sigma _{R}}=0$. It follows from Proposition %
\ref{prop3.2} that for some $r>0$, we have%
\begin{equation*}
\sup_{i}\int_{B_{r}\left( y\right) }e^{2^{-\frac{2}{n-2}}a\left\vert
u_{i}\right\vert ^{\frac{n}{n-2}}}d\mu <\infty .
\end{equation*}%
A covering argument implies%
\begin{equation*}
\sup_{i}\int_{M}e^{2^{-\frac{2}{n-2}}a\left\vert u_{i}\right\vert ^{\frac{n}{%
n-2}}}d\mu <\infty .
\end{equation*}%
This contradicts with the choice of $u_{i}$.
\end{proof}

\begin{example}
\label{ex3.2}Let $\left( M^{n},g\right) $ be a $C^{4}$ compact Riemannian
manifold with boundary and $g$ be a $C^{3}$ Riemannian metric on $M$. We
define%
\begin{equation}
\kappa _{2,N}\left( M,g\right) =\inf_{\substack{ u\in W^{2,\frac{n}{2}%
}\left( M\right) \backslash \left\{ 0\right\}  \\ u_{M}=0,\left. \frac{%
\partial u}{\partial \nu }\right\vert _{\partial M}=0}}\frac{\left\Vert
\Delta u\right\Vert _{L^{\frac{n}{2}}}}{\left\Vert u\right\Vert _{L^{\frac{n%
}{2}}}},  \label{eq3.42}
\end{equation}%
here $\nu $ is the unit outnormal direction on $\partial M$. Assume $0\leq
\kappa <\kappa _{2,N}\left( M,g\right) $, $0<a<a_{2,n}$, $u\in W^{2,\frac{n}{%
2}}\left( M\right) $ with $u_{M}=0$,$\left. \frac{\partial u}{\partial \nu }%
\right\vert _{\partial M}=0$ and%
\begin{equation}
\left\Vert \Delta u\right\Vert _{L^{\frac{n}{2}}}^{\frac{n}{2}}-\kappa ^{%
\frac{n}{2}}\left\Vert u\right\Vert _{L^{\frac{n}{2}}}^{\frac{n}{2}}\leq 1,
\label{eq3.43}
\end{equation}%
then%
\begin{equation}
\int_{M}e^{2^{-\frac{2}{n-2}}a\left\vert u\right\vert ^{\frac{n}{n-2}}}d\mu
\leq c\left( \kappa ,a\right) <\infty .  \label{eq3.44}
\end{equation}
\end{example}

Since the proof of Example \ref{ex3.2} is almost identical to the proof of
Example \ref{ex1.1} (using Proposition \ref{prop3.1} and \ref{prop3.2} when
necessary), we omit it here.

Next we move to the second boundary condition.

\begin{lemma}
\label{lem3.5}Assume $u\in W^{2,\frac{n}{2}}\left( B_{R}^{+}\right)
\backslash \left\{ 0\right\} $ such that $u\left( x\right) =0$ for $%
\left\vert x\right\vert $ close to $R$ and $\left. u\right\vert _{\Sigma
_{R}}=0$, then%
\begin{equation}
\int_{B_{R}^{+}}e^{2^{-\frac{2}{n-2}}a_{2,n}\frac{\left\vert u\right\vert ^{%
\frac{n}{n-2}}}{\left\Vert \Delta u\right\Vert _{L^{\frac{n}{2}}\left(
B_{R}^{+}\right) }^{\frac{n}{n-2}}}}dx\leq c\left( n\right) R^{n}.
\label{eq3.45}
\end{equation}
\end{lemma}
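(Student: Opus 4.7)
The plan is to mimic the proof of Lemma \ref{lem3.3}, replacing the even reflection used there (appropriate for the Neumann condition $\partial_n u=0$ on $\Sigma_R$) by an odd reflection, which is the natural extension for the Dirichlet condition $u|_{\Sigma_R}=0$. Concretely, for $|x|<R$ I define
\begin{equation*}
v(x)=\begin{cases} u(x), & x_n\geq 0;\\ -u(x',-x_n), & x_n<0.\end{cases}
\end{equation*}

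The first step is to verify that $v\in W_{0}^{2,n/2}(B_R)$. Since $u$ vanishes near $|x|=R$, so does $v$, giving compact support in $B_R$. The boundary condition $u|_{\Sigma_R}=0$ ensures $v$ is continuous across $\Sigma_R$. Moreover, the tangential derivatives $\partial_i u$ ($i<n$) vanish on $\Sigma_R$ as traces (they are tangential derivatives of the zero trace $u|_{\Sigma_R}$), so the odd reflection $\partial_i v$ extends without singular part to $W^{1,n/2}(B_R)$. The normal derivative $\partial_n v$ is the even reflection of $\partial_n u$ and extends across $\Sigma_R$ with no boundary condition needed, yielding $\partial_n v\in W^{1,n/2}(B_R)$. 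Hence $v\in W^{2,n/2}(B_R)$, and its compact support inside $B_R$ places it in $W_{0}^{2,n/2}(B_R)$.

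The second step is the computation of $\|\Delta v\|_{L^{n/2}(B_R)}$. On $\{x_n<0\}$ the classical computation gives $\partial_{ii}v(x)=-\partial_{ii}u(x',-x_n)$ for every $i$ (odd for $i<n$ because $\partial_i v$ is odd; odd for $i=n$ because $\partial_n v$ is even and one more $\partial_n$ flips parity). Therefore $\Delta v(x)=-\Delta u(x',-x_n)$ on $\{x_n<0\}$, which yields
\begin{equation*}
\|\Delta v\|_{L^{n/2}(B_R)}^{n/2}=2\|\Delta u\|_{L^{n/2}(B_R^+)}^{n/2},
\end{equation*}
so $\|\Delta v\|_{L^{n/2}(B_R)}^{n/(n-2)}=2^{2/(n-2)}\|\Delta u\|_{L^{n/2}(B_R^+)}^{n/(n-2)}$.

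The final step is to apply the Adams inequality (\ref{eq3.3}) to $v$ on $B_R$ and restrict to $B_R^+$, using the relation above to convert the exponent. This gives
\begin{equation*}
\int_{B_R^+}e^{2^{-\frac{2}{n-2}}a_{2,n}\frac{|u|^{n/(n-2)}}{\|\Delta u\|_{L^{n/2}(B_R^+)}^{n/(n-2)}}}dx=\int_{B_R^+}e^{a_{2,n}\frac{|v|^{n/(n-2)}}{\|\Delta v\|_{L^{n/2}(B_R)}^{n/(n-2)}}}dx\leq \int_{B_R}e^{a_{2,n}\frac{|v|^{n/(n-2)}}{\|\Delta v\|_{L^{n/2}(B_R)}^{n/(n-2)}}}dx\leq c(n)R^n,
\end{equation*}
which is the desired (\ref{eq3.45}). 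The only nontrivial step is the first one, namely checking that the odd reflection really lands in $W_{0}^{2,n/2}(B_R)$ without generating any concentrated distributional contribution to the Laplacian across $\Sigma_R$; this is exactly where the Dirichlet trace $u|_{\Sigma_R}=0$ is used, in perfect parallel to the role played by $\partial_n u|_{\Sigma_R}=0$ in Lemma \ref{lem3.3}.
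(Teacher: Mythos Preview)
Your proof is correct and follows exactly the same approach as the paper: define $v$ by odd reflection across $\Sigma_R$, observe that $v\in W_0^{2,n/2}(B_R)$ with $\|\Delta v\|_{L^{n/2}(B_R)}^{n/2}=2\|\Delta u\|_{L^{n/2}(B_R^+)}^{n/2}$, and apply Adams' inequality (\ref{eq3.3}). You simply supply more detail than the paper on why the odd reflection lies in $W_0^{2,n/2}(B_R)$, which the paper states without justification.
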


\begin{proof}
For $\left\vert x\right\vert <R$, we define%
\begin{equation*}
v\left( x\right) =\left\{ 
\begin{array}{cc}
u\left( x\right) , & \text{if }x_{n}>0; \\ 
-u\left( x^{\prime },-x_{n}\right) , & \text{if }x_{n}<0\text{.}%
\end{array}%
\right.
\end{equation*}%
Then $v\in W_{0}^{2,\frac{n}{2}}\left( B_{R}\right) $ with $\left\Vert
\Delta v\right\Vert _{L^{\frac{n}{2}}\left( B_{R}\right) }^{\frac{n}{2}%
}=2\left\Vert \Delta u\right\Vert _{L^{\frac{n}{2}}\left( B_{R}^{+}\right)
}^{\frac{n}{2}}$. Hence%
\begin{eqnarray*}
\int_{B_{R}^{+}}e^{2^{-\frac{2}{n-2}}a_{2,n}\frac{\left\vert u\right\vert ^{%
\frac{n}{n-2}}}{\left\Vert \Delta u\right\Vert _{L^{\frac{n}{2}}\left(
B_{R}^{+}\right) }^{\frac{n}{n-2}}}}dx &=&\int_{B_{R}^{+}}e^{a_{2,n}\frac{%
\left\vert u\right\vert ^{\frac{n}{n-2}}}{\left\Vert \Delta v\right\Vert
_{L^{\frac{n}{2}}\left( B_{R}\right) }^{\frac{n}{n-2}}}}dx \\
&\leq &\int_{B_{R}}e^{a_{2,n}\frac{\left\vert v\right\vert ^{\frac{n}{n-2}}}{%
\left\Vert \Delta v\right\Vert _{L^{\frac{n}{2}}\left( B_{R}\right) }^{\frac{%
n}{n-2}}}}dx \\
&\leq &c\left( n\right) R^{n}.
\end{eqnarray*}
\end{proof}

\begin{proposition}
\label{prop3.3}Let $0<R\leq 1$, $g$ be a $C^{1}$ Riemannian metric on $%
\overline{B_{R}^{+}}$. Assume $u_{i}\in W^{2,\frac{n}{2}}\left(
B_{R}^{+}\right) $, $\left. u_{i}\right\vert _{\Sigma _{R}}=0$, $%
u_{i}\rightharpoonup u$ weakly in $W^{2,\frac{n}{2}}\left( B_{R}^{+}\right) $
and%
\begin{equation*}
\left\vert \Delta _{g}u_{i}\right\vert ^{\frac{n}{2}}d\mu \rightarrow
\left\vert \Delta _{g}u\right\vert ^{\frac{n}{2}}d\mu +\sigma \text{ as
measure on }B_{R}^{+}\text{.}
\end{equation*}%
If $0<p<\sigma \left( \left\{ 0\right\} \right) ^{-\frac{2}{n-2}}$, then
there exists $r>0$ such that%
\begin{equation}
\sup_{i}\int_{B_{r}^{+}}e^{2^{-\frac{2}{n-2}}a_{2,n}p\left\vert
u_{i}\right\vert ^{\frac{n}{n-2}}}d\mu <\infty .  \label{eq3.46}
\end{equation}
\end{proposition}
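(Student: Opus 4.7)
The plan is to mirror the strategy of Proposition \ref{prop2.2}, reducing the Dirichlet boundary setting on $B_R^+$ to the case on the full ball $B_R$ handled by Proposition \ref{prop3.1}. The extension mechanism must be odd reflection across $\Sigma_R$ rather than zero extension, because $W^{2,\frac{n}{2}}$-regularity is destroyed by zero extension under the single boundary condition $u_i|_{\Sigma_R}=0$, whereas odd reflection is compatible: the vanishing trace gives continuity of the tangential derivatives, while the normal derivative becomes even and hence automatically continuous across $\Sigma_R$.

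First I would perform a linear change of coordinates preserving the half-space $\{x_n\geq 0\}$ to arrange $g_{ij}(0)=\delta_{ij}$; this is possible by Gram--Schmidt in $g(0)$ starting from $\partial_1,\ldots,\partial_{n-1}$ and choosing the last basis vector $g(0)$-orthonormal to $\Sigma_R$ and pointing upward. Next, extend $g$ to a $C^1$ Riemannian metric $h$ on $\overline{B_R}$ by Whitney extension; in particular $h_{ij}(0)=g_{ij}(0)=\delta_{ij}$ automatically. Then define the odd reflections $v_i(x^\prime,x_n)=-u_i(x^\prime,-x_n)$ for $x_n<0$, and similarly for $v$, so that $v_i,v\in W^{2,\frac{n}{2}}(B_R)$ and $v_i\rightharpoonup v$ weakly in $W^{2,\frac{n}{2}}(B_R)$ (the reflection operator is continuous on functions with vanishing trace). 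After passing to a subsequence, one obtains a Radon measure $\tau$ on $B_R$ with $|\Delta_h v_i|^{\frac{n}{2}}\,d\mu_h\to |\Delta_h v|^{\frac{n}{2}}\,d\mu_h+\tau$ as measures.

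The key claim is $\tau(\{0\})=2\sigma(\{0\})$. Granting this and applying Proposition \ref{prop3.1} to $(v_i)$ on $(B_R,h)$ with exponent $q=2^{-\frac{2}{n-2}}p$, the hypothesis $p<\sigma(\{0\})^{-\frac{2}{n-2}}$ yields
\begin{equation*}
q=2^{-\frac{2}{n-2}}p<2^{-\frac{2}{n-2}}\sigma(\{0\})^{-\frac{2}{n-2}}=(2\sigma(\{0\}))^{-\frac{2}{n-2}}=\tau(\{0\})^{-\frac{2}{n-2}},
\end{equation*}
so Proposition \ref{prop3.1} produces $r>0$ with $\sup_i\int_{B_r}e^{a_{2,n}q|v_i|^{\frac{n}{n-2}}}\,d\mu_h<\infty$. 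Restricting to $B_r^+$, where $v_i=u_i$ and $d\mu_h=d\mu$, and noting $a_{2,n}q=2^{-\frac{2}{n-2}}a_{2,n}p$, gives exactly (\ref{eq3.46}).

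The main obstacle is verifying $\tau(\{0\})=2\sigma(\{0\})$. The contribution from the upper half-ball is immediate since $h=g$, $v_i=u_i$, and $d\mu_h=d\mu$ on $B_R^+$, producing mass $\sigma(\{0\})$ at the origin. For the lower half, the Euclidean Laplacian satisfies $\Delta v_i(x^\prime,x_n)=-\Delta u_i(x^\prime,-x_n)$, and near $0$ both $h$ and the pullback of $g$ under the reflection $(x^\prime,x_n)\mapsto(x^\prime,-x_n)$ differ from the Euclidean metric only by continuous perturbations vanishing at $0$, thanks to the normalization $h(0)=g(0)=I$. A localized comparison of $|\Delta_h v_i|^{\frac{n}{2}}$ with $|\Delta v_i|^{\frac{n}{2}}$ on shrinking neighborhoods of $0$ then shows the lower-half concentration also equals $\sigma(\{0\})$, giving the doubling. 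This comparison parallels the Euclidean-to-$g$-Laplacian analysis in the proof of Proposition \ref{prop3.1}, and the normalization $h(0)=I$ is precisely what keeps the leading-order terms in agreement between the two sides.
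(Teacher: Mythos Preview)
Your approach is correct but takes a genuinely different route from the paper. The paper normalizes $g_{ij}(0)=\delta_{ij}$ (preserving the half-space) and then repeats the cutoff argument of Proposition~\ref{prop3.2} verbatim on $B_R^+$, replacing Lemma~\ref{lem3.3} by Lemma~\ref{lem3.5}; the odd reflection is thus hidden inside Lemma~\ref{lem3.5} and applied only to the localized function $\varphi v_i$. You instead perform the odd reflection on the full sequence at the outset, extend the metric by Whitney, and reduce wholesale to Proposition~\ref{prop3.1} on $B_R$.

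Both strategies rest on the same two ingredients: odd reflection and the comparison $|\Delta_h w-\Delta w|\le \varepsilon_1|D^2w|+c|\nabla w|$ near the origin. The paper carries out this comparison once, on $B_{R_1}^+$ with the cutoff; you must carry it out on the lower half-ball to justify $\tau(\{0\})\le 2\sigma(\{0\})$ (only the inequality is needed for your application). Your sketch of this step is sound: on $B_r^-$ one controls $|\Delta_h v_i|$ by $|\Delta v_i|$ up to $\varepsilon(r)|D^2 v_i|+C|\nabla v_i|$, the Euclidean Laplacian transfers exactly under odd reflection, and on $B_r^+$ one controls $|\Delta u_i|$ by $|\Delta_g u_i|$ in the same way; the $D^2$ errors stay uniformly bounded in $i$ and carry a coefficient $\varepsilon(r)\to 0$, while the gradient errors vanish in the limit by compact embedding. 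What your route buys is a clean reduction to a statement already proved; what the paper's route buys is never having to track a defect measure through the Whitney-extended metric on the lower half. One minor technicality: measure convergence of $|\Delta_h v_i|^{n/2}d\mu_h$ on $B_R^-$ is not given, so you must pass to a subsequence to produce $\tau$, and the full-sequence conclusion then follows by the standard contradiction argument.
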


By a linear changing of variable and shrinking $R$ if necessary we can
assume $g=g_{ij}dx_{i}dx_{j}$ with $g_{ij}\left( 0\right) =\delta _{ij}$.
The remaining proof is almost identical to the proof of Proposition \ref%
{prop3.2} (using Lemma \ref{lem3.5} instead of Lemma \ref{lem3.3}), we omit
it here.

\begin{theorem}
\label{thm3.3}Let $M^{n}$ be a $C^{2}$ compact manifold with boundary and $g$
be a $C^{1}$ Riemannian metric on $M$. If $E\subset M$ with $\mu \left(
E\right) \geq \delta >0$, $u\in W^{2,\frac{n}{2}}\left( M\right) \cap
W_{0}^{1,\frac{n}{2}}\left( M\right) \backslash \left\{ 0\right\} $ with $%
u_{E}=0$, then for any $0<a<a_{2,n}=\frac{n}{\left\vert \mathbb{S}%
^{n-1}\right\vert }\left( \frac{4\pi ^{\frac{n}{2}}}{\Gamma \left( \frac{n-2%
}{2}\right) }\right) ^{\frac{n}{n-2}}$,%
\begin{equation}
\int_{M}e^{2^{-\frac{2}{n-2}}a\frac{\left\vert u\right\vert ^{\frac{n}{n-2}}%
}{\left\Vert \Delta u\right\Vert _{L^{\frac{n}{2}}}^{\frac{n}{n-2}}}}d\mu
\leq c\left( a,\delta \right) <\infty .  \label{eq3.47}
\end{equation}
\end{theorem}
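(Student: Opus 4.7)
The plan is to mirror the contradiction/concentration-compactness scheme already used in the proofs of Theorems 3.1 and 3.2, substituting Proposition 3.3 for Proposition 3.2 in the treatment of boundary points. The arithmetic of the sharp constants (the $2^{-2/(n-2)}$ factor at boundary points versus no factor at interior points) forces this substitution but otherwise leaves the scaffolding unchanged.

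First I would record a preliminary estimate: for any $v \in W^{2,n/2}(M) \cap W_0^{1,n/2}(M)$, Poincar\'e's inequality for Dirichlet functions combined with the standard $L^p$ Calder\'on--Zygmund estimates for the Dirichlet Laplacian give
$$\|v\|_{W^{2,n/2}(M)} \leq c(M,g)\, \|\Delta v\|_{L^{n/2}(M)}.$$
Assuming (3.47) fails, I extract a sequence $u_i \in W^{2,n/2}(M) \cap W_0^{1,n/2}(M)$ with sets $E_i \subset M$ of measure at least $\delta$ such that $u_{i,E_i} = 0$, $\|\Delta u_i\|_{L^{n/2}} = 1$, and $\int_M \exp(2^{-2/(n-2)} a |u_i|^{n/(n-2)})\, d\mu \to \infty$. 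The preceding estimate yields uniform boundedness in $W^{2,n/2}(M)$, so after passing to a subsequence $u_i \rightharpoonup u$ weakly in $W^{2,n/2}(M) \cap W_0^{1,n/2}(M)$ and
$$|\Delta u_i|^{n/2}\, d\mu \to |\Delta u|^{n/2}\, d\mu + \sigma$$
as measures on $M$, with $\sigma(M) \leq 1$.

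The localization step splits into two cases. At an interior point $y \in M \setminus \partial M$, the bound $\sigma(\{y\}) \leq 1$ gives $a/a_{2,n} < \sigma(\{y\})^{-2/(n-2)}$, so Proposition 3.1 provides a radius $r > 0$ with $\sup_i \int_{B_r(y)} e^{a|u_i|^{n/(n-2)}}\, d\mu < \infty$, which majorizes the desired exponential. At a boundary point $y \in \partial M$ I would introduce Fermi coordinates $\phi(\xi,t) = \exp_\xi(-t\nu(\xi))$ exactly as in the proof of Theorem 3.2, identifying a neighborhood of $y$ with a half-ball $B_R^+$ and the corresponding portion of $\partial M$ with $\Sigma_R$. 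After a linear change of variables normalizing $g_{ij}(0) = \delta_{ij}$, the hypothesis $u_i \in W_0^{1,n/2}(M)$ translates, in the trace sense, to $u_i|_{\Sigma_R} = 0$, precisely the condition required by Proposition 3.3. Applying Proposition 3.3 therefore gives some $r > 0$ with
$$\sup_i \int_{B_r(y)} e^{2^{-2/(n-2)} a |u_i|^{n/(n-2)}}\, d\mu < \infty.$$

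A finite covering argument then produces a uniform bound for $\int_M \exp(2^{-2/(n-2)} a |u_i|^{n/(n-2)})\, d\mu$, contradicting the choice of $u_i$. The main obstacle is the boundary case: one must verify that the Dirichlet trace condition is genuinely preserved by the Fermi coordinate chart and that the local $L^{n/2}$ norm of $\Delta_g u_i$ in the chart is comparable to the global one, so that Proposition 3.3 can be invoked with the correct half-ball constant $2^{-2/(n-2)} a_{2,n}$. Once this setup is checked, the rest of the argument is the same covering/contradiction loop used in the proofs of Theorems 3.1 and 3.2.
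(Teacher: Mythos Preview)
Your proposal is correct and matches the paper's approach: the paper simply says the proof is ``almost identical to the proof of Theorem~\ref{thm3.2} (using Proposition~\ref{prop3.3} instead of Proposition~\ref{prop3.2})'' and omits it.

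One minor technical comment. Theorem~\ref{thm3.3} assumes only a $C^{2}$ manifold with a $C^{1}$ metric, whereas Theorem~\ref{thm3.2} needed $C^{4}/C^{3}$ precisely so that the Fermi chart $\phi(\xi,t)=\exp_{\xi}(-t\nu(\xi))$ is regular enough to turn the Neumann condition $\partial u/\partial\nu|_{\partial M}=0$ into $\partial_{n}u|_{\Sigma_{R}}=0$. In the present Dirichlet setting you do not need Fermi coordinates at all: any $C^{2}$ boundary chart sending a neighborhood of $y\in\partial M$ to $B_{R}^{+}$ with $\partial M$ going to $\Sigma_{R}$ will carry the trace condition $u_{i}|_{\partial M}=0$ to $u_{i}|_{\Sigma_{R}}=0$, and the pullback metric is then $C^{1}$, so Proposition~\ref{prop3.3} applies directly. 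Using the exponential map under only $C^{1}$ metric regularity would not in general give you a $C^{2}$ change of variables, so drop the Fermi construction and invoke an arbitrary $C^{2}$ boundary chart instead. Your observation that the Dirichlet Poincar\'e/Calder\'on--Zygmund estimate already gives the $W^{2,n/2}$ bound without detouring through $u_{E}=0$ is also correct.
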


The proof is almost identical to the proof of Theorem \ref{thm3.2} (using
Proposition \ref{prop3.3} instead of Proposition \ref{prop3.2}), we omit it
here.

\begin{example}
\label{ex3.3}Let $M^{n}$ be a $C^{2}$ compact manifold with boundary and $g$
be a $C^{1}$ Riemannian metric on $M$. We denote%
\begin{equation}
\kappa _{2,D}\left( M,g\right) =\inf_{u\in W^{2,\frac{n}{2}}\left( M\right)
\cap W_{0}^{1,\frac{n}{2}}\left( M\right) \backslash \left\{ 0\right\} }%
\frac{\left\Vert \Delta u\right\Vert _{L^{\frac{n}{2}}}}{\left\Vert
u\right\Vert _{L^{\frac{n}{2}}}}.  \label{eq3.48}
\end{equation}%
Assume $0\leq \kappa <\kappa _{2,D}\left( M,g\right) $, $0<a<a_{2,n}$, $u\in
W^{2,\frac{n}{2}}\left( M\right) \cap W_{0}^{1,\frac{n}{2}}\left( M\right) $
and%
\begin{equation}
\left\Vert \Delta u\right\Vert _{L^{\frac{n}{2}}}^{\frac{n}{2}}-\kappa ^{%
\frac{n}{2}}\left\Vert u\right\Vert _{L^{\frac{n}{2}}}^{\frac{n}{2}}\leq 1,
\label{eq3.49}
\end{equation}%
then%
\begin{equation}
\int_{M}e^{2^{-\frac{2}{n-2}}a\left\vert u\right\vert ^{\frac{n}{n-2}}}d\mu
\leq c\left( \kappa ,a\right) <\infty .  \label{eq3.50}
\end{equation}
\end{example}

Since the proof of Example \ref{ex3.3} is almost identical to the proof of
Example \ref{ex1.1} (using Proposition \ref{prop3.1} and \ref{prop3.3} when
necessary), we omit it here.

At last we turn to the third boundary condition.

\begin{proposition}
\label{prop3.4}Let $0<R\leq 1$, $g$ be a $C^{1}$ Riemannian metric on $%
\overline{B_{R}^{+}}$. Assume $u_{i}\in W^{2,\frac{n}{2}}\left(
B_{R}^{+}\right) $, $\left. u\right\vert _{\Sigma _{R}}=0$, $\left. \partial
_{n}u\right\vert _{\Sigma _{R}}=0$, $u_{i}\rightharpoonup u$ weakly in $W^{2,%
\frac{n}{2}}\left( B_{R}^{+}\right) $ and%
\begin{equation*}
\left\vert \Delta _{g}u_{i}\right\vert ^{\frac{n}{2}}d\mu \rightarrow
\left\vert \Delta _{g}u\right\vert ^{\frac{n}{2}}d\mu +\sigma \text{ as
measure on }B_{R}^{+}\text{.}
\end{equation*}%
If $0<p<\sigma \left( \left\{ 0\right\} \right) ^{-\frac{2}{n-2}}$, then
there exists $r>0$ such that%
\begin{equation}
\sup_{i}\int_{B_{r}^{+}}e^{a_{2,n}p\left\vert u_{i}\right\vert ^{\frac{n}{n-2%
}}}d\mu <\infty .  \label{eq3.51}
\end{equation}
\end{proposition}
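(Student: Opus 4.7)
The key observation is that because $u_i$ satisfies \emph{both} Dirichlet-type conditions $u_i|_{\Sigma_R}=0$ and $\partial_n u_i|_{\Sigma_R}=0$, extending by zero across $\Sigma_R$ produces a $W^{2,n/2}$ function on the full ball $B_R$. This lets us bypass the reflection trick used in Propositions \ref{prop3.2} and \ref{prop3.3}, which doubled $\|\Delta v\|_{L^{n/2}}^{n/2}$ and thereby cost the factor $2^{-2/(n-2)}$. Here we recover the full constant $a_{2,n}p$ by direct reduction to Proposition \ref{prop3.1}.

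Concretely, the plan is as follows. First, extend $g$ to a $C^1$ Riemannian metric $h$ on $\overline{B_R}$ (for instance by reflecting the components $g_{ij}$ across $\Sigma_R$ and then smoothing away from $\Sigma_R$). For each $i$, set
\[
v_i(x)=\begin{cases} u_i(x), & x\in B_R^+,\\ 0, & x\in B_R\setminus B_R^+,\end{cases}
\]
and define $v$ analogously from $u$. The vanishing of both $u_i$ and $\partial_n u_i$ on $\Sigma_R$ (in the trace sense) forces every first-order weak partial $\partial_j v_i$ to have zero trace on $\Sigma_R$, so $v_i\in W^{2,n/2}(B_R)$ with $\|v_i\|_{W^{2,n/2}(B_R)}=\|u_i\|_{W^{2,n/2}(B_R^+)}$, and consequently $v_i\rightharpoonup v$ weakly in $W^{2,n/2}(B_R)$.

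Next, define a Borel measure $\tau$ on $B_R$ by $\tau(E)=\sigma(E\cap B_R^+)$. Since $\Delta_h v_i$ agrees with $\Delta_g u_i$ on $B_R^+$ and vanishes almost everywhere on $B_R\setminus B_R^+$, the convergence hypothesis on $u_i$ translates into
\[
|\Delta_h v_i|^{n/2}\,d\mu_h\longrightarrow |\Delta_h v|^{n/2}\,d\mu_h+\tau \quad \text{as measures on }B_R.
\]
Because $\tau(\{0\})=\sigma(\{0\})$, we have $0<p<\tau(\{0\})^{-2/(n-2)}$, so Proposition \ref{prop3.1} applied to the sequence $\{v_i\}$ with metric $h$ yields some $r>0$ for which
\[
\sup_i\int_{B_r} e^{a_{2,n}p\,|v_i|^{n/(n-2)}}\,d\mu_h<\infty.
\]
Restricting to $B_r^+$ and using $v_i=u_i$ and the equivalence of $\mu_h$ and $\mu$ on $B_R^+$ gives \eqref{eq3.51}.

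The only delicate point is the $W^{2,n/2}(B_R)$ regularity of the zero extension $v_i$: this is exactly where both boundary conditions are required, since otherwise the distributional second derivatives of $v_i$ across $\Sigma_R$ would acquire singular jump contributions proportional to the traces of $u_i$ or $\partial_n u_i$. Once that is in hand, the argument parallels Proposition \ref{prop2.2} line for line, with Proposition \ref{prop3.1} playing the role that Proposition \ref{prop1.1} played there.
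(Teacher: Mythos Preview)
Your proof is correct and follows essentially the same approach as the paper: extend $g$ to a $C^1$ metric $h$ on $\overline{B_R}$, extend $u_i$ and $u$ by zero across $\Sigma_R$ (which lands in $W^{2,n/2}(B_R)$ precisely because both $u_i$ and $\partial_n u_i$ vanish on $\Sigma_R$), push $\sigma$ forward to a measure $\tau$ on $B_R$, and invoke Proposition~\ref{prop3.1}. Your write-up is in fact more careful than the paper's, which glosses over the $W^{2,n/2}$ regularity of the zero extension and contains a couple of copy-paste slips from the first-order case.
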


\begin{proof}
Let $h$ be a $C^{1}$ Riemannian metric on $\overline{B_{R}}$, which is an
extension of $g$. We define%
\begin{equation*}
v_{i}\left( x\right) =\left\{ 
\begin{tabular}{ll}
$u_{i}\left( x\right) ,$ & if $x\in B_{R}^{+}$ \\ 
$0,$ & if $x\in B_{R}\backslash B_{R}^{+}$%
\end{tabular}%
\right. ,\quad v\left( x\right) =\left\{ 
\begin{tabular}{ll}
$u\left( x\right) ,$ & if $x\in B_{R}^{+}$ \\ 
$0,$ & if $x\in B_{R}\backslash B_{R}^{+}$%
\end{tabular}%
\right. ,
\end{equation*}%
and a measure $\tau $ on $B_{R}$ by $\tau \left( E\right) =\sigma \left(
E\cap B_{R}\right) $ for any Borel set $E\subset B_{R}$. Then $v_{i},v\in
W^{2,\frac{n}{2}}\left( B_{R}\right) $, $v_{i}\rightharpoonup v$ weakly in $%
W^{2,\frac{n}{2}}\left( B_{R}\right) $ and%
\begin{equation*}
\left\vert \nabla _{h}v_{i}\right\vert _{h}^{n}d\mu _{h}\rightarrow
\left\vert \nabla _{h}v\right\vert _{h}^{n}d\mu _{h}+\tau \text{ as measure
on }B_{R}.
\end{equation*}%
Here $\mu _{h}$ is the measure associated with $h$. Since $\tau \left(
\left\{ 0\right\} \right) =\sigma \left( \left\{ 0\right\} \right) $, it
follows from Proposition \ref{prop3.1} that for some $r>0$,%
\begin{equation*}
\sup_{i}\int_{B_{r}}e^{a_{2,n}p\left\vert v_{i}\right\vert ^{\frac{n}{n-2}%
}}d\mu _{h}<\infty .
\end{equation*}%
Hence%
\begin{equation*}
\sup_{i}\int_{B_{r}^{+}}e^{a_{2,n}p\left\vert u_{i}\right\vert ^{\frac{n}{n-2%
}}}d\mu <\infty .
\end{equation*}
\end{proof}

\begin{theorem}
\label{thm3.4}Let $M^{n}$ be a $C^{2}$ compact manifold with boundary and $g$
be a $C^{1}$ Riemannian metric on $M$. We denote%
\begin{equation}
\kappa _{2}\left( M,g\right) =\inf_{u\in W_{0}^{2,\frac{n}{2}}\left(
M\right) \backslash \left\{ 0\right\} }\frac{\left\Vert \Delta u\right\Vert
_{L^{\frac{n}{2}}}}{\left\Vert u\right\Vert _{L^{\frac{n}{2}}}}.
\label{eq3.52}
\end{equation}%
Assume $0\leq \kappa <\kappa _{2}\left( M,g\right) $, $0<a<a_{2,n}$, $u\in
W_{0}^{2,\frac{n}{2}}\left( M\right) $ and%
\begin{equation}
\left\Vert \Delta u\right\Vert _{L^{\frac{n}{2}}}^{\frac{n}{2}}-\kappa ^{%
\frac{n}{2}}\left\Vert u\right\Vert _{L^{\frac{n}{2}}}^{\frac{n}{2}}\leq 1,
\label{eq3.53}
\end{equation}%
then%
\begin{equation}
\int_{M}e^{a\left\vert u\right\vert ^{\frac{n}{n-2}}}d\mu \leq c\left(
\kappa ,a\right) <\infty .  \label{eq3.54}
\end{equation}
\end{theorem}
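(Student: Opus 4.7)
The plan is to argue by contradiction along exactly the same lines as Theorem 3.2, with the only real difference being that the boundary condition $u\in W_0^{2,n/2}(M)$ gives us \emph{both} $u=0$ and $\partial_\nu u=0$ on $\partial M$ in the trace sense, and therefore allows us to invoke Proposition 3.4 (rather than Proposition 3.2 or 3.3) at boundary points, which is why the full constant $a_{2,n}$ appears on the right hand side of \eqref{eq3.54} instead of $2^{-2/(n-2)}a_{2,n}$.

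Suppose \eqref{eq3.54} fails. Then one produces a sequence $u_i\in W_0^{2,n/2}(M)$ satisfying \eqref{eq3.53} with $\int_M e^{a|u_i|^{n/(n-2)}}d\mu\to\infty$. From the definition of $\kappa_2(M,g)$, the inequality $(\kappa_2(M,g)^{n/2}-\kappa^{n/2})\|u_i\|_{L^{n/2}}^{n/2}\le 1$ bounds $\|u_i\|_{L^{n/2}}$, hence $\|\Delta u_i\|_{L^{n/2}}$ is bounded as well, and standard elliptic estimates on $M$ (with zero Dirichlet data on $\partial M$) yield a uniform $W^{2,n/2}$ bound. After passing to a subsequence, I get $u_i\rightharpoonup u$ weakly in $W^{2,n/2}(M)$ (so $u\in W_0^{2,n/2}(M)$ since this subspace is weakly closed), $u_i\to u$ strongly in $W^{1,n/2}$, and a nonnegative Radon measure $\sigma$ on $M$ with
\begin{equation*}
|\Delta u_i|^{n/2}\,d\mu\to|\Delta u|^{n/2}\,d\mu+\sigma\quad\text{as measures on }M.
\end{equation*}
Passing to the limit in \eqref{eq3.53} and using $\|\Delta u\|_{L^{n/2}}^{n/2}-\kappa^{n/2}\|u\|_{L^{n/2}}^{n/2}\ge 0$ gives $\sigma(M)\le 1$, so $\sigma(\{x\})^{-2/(n-2)}\ge 1>a/a_{2,n}$ for every $x\in M$.

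For an interior point $x\in M\setminus\partial M$, Proposition \ref{prop3.1} applied in a chart around $x$ produces $r>0$ with $\sup_i\int_{B_r(x)}e^{a|u_i|^{n/(n-2)}}d\mu<\infty$. For a boundary point $y\in\partial M$, I set up boundary normal coordinates exactly as in the proof of Theorem \ref{thm3.2}: choose local coordinates $s_1,\dots,s_{n-1}$ on $\partial M$ near $y$ orthonormal at $y$, and define $x_i(z)=s_i(\xi(z))$ for $1\le i\le n-1$, $x_n(z)=t(z)$, where $z=\exp_{\xi(z)}(-t(z)\nu(\xi(z)))$. In this chart $\partial_{x_n}|_{x_n=0}=-\nu$, and since $u_i\in W_0^{2,n/2}(M)$, both $u_i|_{\Sigma_R}=0$ and $\partial_n u_i|_{\Sigma_R}=0$ hold in the trace sense. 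Proposition \ref{prop3.4} therefore yields $r>0$ with $\sup_i\int_{B_r(y)}e^{a_{2,n}p|u_i|^{n/(n-2)}}d\mu<\infty$ for any $p$ with $a/a_{2,n}<p<\sigma(\{y\})^{-2/(n-2)}$, which we have chosen; in particular, $\sup_i\int_{B_r(y)}e^{a|u_i|^{n/(n-2)}}d\mu<\infty$.

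Covering the compact set $M$ by finitely many such balls gives $\sup_i\int_M e^{a|u_i|^{n/(n-2)}}d\mu<\infty$, contradicting the choice of $u_i$. The only delicate point in the entire argument is the verification that the weak limit $u$ also satisfies Dirichlet boundary conditions (so that the defect measure accounting still holds with the same $\kappa$), and that at boundary points the zero Dirichlet data of $u_i$ is exactly what Proposition \ref{prop3.4} needs; once these are in place, the rest is a mechanical repetition of the scheme already used in Examples \ref{ex1.1}, \ref{ex2.1}, \ref{ex3.1}, \ref{ex3.2}, and \ref{ex3.3}, which is why the authors remark that the proof can be omitted.
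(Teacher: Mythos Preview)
Your proof is correct and follows precisely the scheme the paper indicates: it is the argument of Example~\ref{ex1.1} transported to the $W_0^{2,n/2}$ setting, invoking Proposition~\ref{prop3.1} at interior points and Proposition~\ref{prop3.4} at boundary points, with the defect-measure bound $\sigma(M)\le 1$ coming from the weak closedness of $W_0^{2,n/2}(M)$ and the definition of $\kappa_2(M,g)$. The only minor remark is that the boundary-normal coordinate construction you borrow from Theorem~\ref{thm3.2} is not strictly needed here, since Proposition~\ref{prop3.4} carries no normalisation hypothesis on $g$ and the condition $u_i\in W_0^{2,n/2}(M)$ already forces $u_i|_{\Sigma_R}=\partial_n u_i|_{\Sigma_R}=0$ in \emph{any} boundary chart; but what you wrote is of course also valid.
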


Since the proof of Theorem \ref{thm2.2} is almost identical to the proof of
Example \ref{ex1.1} (using Proposition \ref{prop3.1} and \ref{prop3.4} when
necessary), we omit it here. Theorem \ref{thm3.4} should be compared to \cite%
{LY}.

\section{Higher order Sobolev spaces\label{sec4}}

In this section, we will apply our approach to higher order Sobolev spaces.
Since the arguments are similar to those for first and second order Sobolev
spaces, we omit all the proofs here. For future references we list the
theorems.

\subsection{$W^{m,\frac{n}{m}}\left( M^{n}\right) $ for even $m$}

Let $m\in \mathbb{N}$ be an even number strictly less than $n$. For $R>0$, $%
u\in W_{0}^{m,\frac{n}{m}}\left( B_{R}\right) \backslash \left\{ 0\right\} $%
, it is shown in \cite{A} that%
\begin{equation}
\int_{B_{R}}\exp \left( a_{m,n}\frac{\left\vert u\right\vert ^{\frac{n}{n-m}}%
}{\left\Vert \Delta ^{\frac{m}{2}}u\right\Vert _{L^{\frac{n}{m}}}^{\frac{n}{%
n-m}}}\right) dx\leq c\left( m,n\right) R^{n}.  \label{eq4.1}
\end{equation}%
Here%
\begin{equation}
a_{m,n}=\frac{n}{\left\vert \mathbb{S}^{n-1}\right\vert }\left( \frac{\pi ^{%
\frac{n}{2}}2^{m}\Gamma \left( \frac{m}{2}\right) }{\Gamma \left( \frac{n-m}{%
2}\right) }\right) ^{\frac{n}{n-m}}.  \label{eq4.2}
\end{equation}

\begin{lemma}
\label{lem4.1}If $u\in W^{m,\frac{n}{m}}\left( B_{R}^{n}\right) $, then for
any $a>0$,%
\begin{equation}
\int_{B_{R}}e^{a\left\vert u\right\vert ^{\frac{n}{n-m}}}dx<\infty .
\label{eq4.3}
\end{equation}
\end{lemma}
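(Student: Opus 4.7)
The plan is to mimic the proofs of Lemma \ref{lem1.1} and Lemma \ref{lem3.1} verbatim, now using Adams' sharp inequality (\ref{eq4.1}) in place of Moser's or Adams' second-order version. The scheme is: first handle the subspace $W_{0}^{m,\frac{n}{m}}(B_{R})$ by a density plus splitting argument, then reduce the general case to this one by a Stein-type extension to a larger ball.

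\textbf{Step 1 (compactly supported case).} Assume $u \in W_{0}^{m,\frac{n}{m}}(B_{R})$. If $u$ is bounded the claim is immediate, so we may assume $u$ is unbounded. For $\varepsilon > 0$ small, pick $v \in C_{c}^{\infty}(B_{R})$ with $\|u - v\|_{W^{m,n/m}(B_{R})} < \varepsilon$. Since $\Delta^{m/2}$ is a differential operator of order $m$, we have
\begin{equation*}
\bigl\|\Delta^{m/2}(u-v)\bigr\|_{L^{n/m}(B_{R})} \leq c(m,n)\, \varepsilon.
\end{equation*}
Setting $w = u - v$, from $|u| \leq \|v\|_{L^{\infty}} + |w|$ and the elementary inequality $(\alpha + \beta)^{n/(n-m)} \leq 2^{m/(n-m)}(\alpha^{n/(n-m)} + \beta^{n/(n-m)})$ for $\alpha,\beta \geq 0$, I get
\begin{equation*}
|u|^{\frac{n}{n-m}} \leq 2^{\frac{m}{n-m}} \|v\|_{L^{\infty}}^{\frac{n}{n-m}} + 2^{\frac{m}{n-m}} |w|^{\frac{n}{n-m}}.
\end{equation*}
Hence, taking $\varepsilon$ so small that $2^{m/(n-m)} a \cdot \|\Delta^{m/2} w\|_{L^{n/m}}^{n/(n-m)} \leq a_{m,n}$, I can bound
\begin{equation*}
e^{a|u|^{n/(n-m)}} \leq e^{2^{m/(n-m)} a \|v\|_{L^{\infty}}^{n/(n-m)}} \cdot \exp\!\left(a_{m,n}\frac{|w|^{n/(n-m)}}{\|\Delta^{m/2} w\|_{L^{n/m}}^{n/(n-m)}}\right),
\end{equation*}
and an application of (\ref{eq4.1}) to $w \in W_{0}^{m,n/m}(B_{R})$ yields finiteness.

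\textbf{Step 2 (general case).} For arbitrary $u \in W^{m,\frac{n}{m}}(B_{R})$, invoke a standard extension operator to produce $\widetilde{u} \in W_{0}^{m,\frac{n}{m}}(B_{2R})$ with $\widetilde{u}|_{B_{R}} = u$. Then
\begin{equation*}
\int_{B_{R}} e^{a|u|^{n/(n-m)}}\,dx \leq \int_{B_{2R}} e^{a|\widetilde{u}|^{n/(n-m)}}\,dx < \infty
\end{equation*}
by Step 1.

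\textbf{Main point to watch.} The only nontrivial issue is ensuring that the $W^{m,n/m}$-approximation controls $\|\Delta^{m/2}(u-v)\|_{L^{n/m}}$, which is routine since $\Delta^{m/2}$ is an order-$m$ linear differential operator with constant coefficients; and the existence of a bounded extension operator $W^{m,\frac{n}{m}}(B_{R}) \to W_{0}^{m,\frac{n}{m}}(B_{2R})$, which is a standard Calderón/Stein type result for balls. No analytical obstacle is present beyond these well-known facts, which is why the author omits the proof and says it mirrors the earlier cases.
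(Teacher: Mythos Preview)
Your proposal is correct and follows exactly the approach the paper intends: it is a verbatim adaptation of the proofs of Lemma~\ref{lem1.1} and Lemma~\ref{lem3.1}, replacing Moser's/Adams' second-order inequality by the higher-order Adams inequality~(\ref{eq4.1}), and this is precisely why the paper omits the proof in Section~\ref{sec4}. The two points you flag (control of $\|\Delta^{m/2}w\|_{L^{n/m}}$ by the $W^{m,n/m}$-norm, and the existence of an extension $W^{m,n/m}(B_R)\to W_0^{m,n/m}(B_{2R})$) are indeed the only things to check and are standard.
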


\begin{proposition}
\label{prop4.1}Let $0<R\leq 1$, $g$ be a $C^{m-1}$ Riemannian metric on $%
\overline{B_{R}^{n}}$. Assume $u_{i}\in W^{m,\frac{n}{m}}\left(
B_{R}^{n}\right) $, $u_{i}\rightharpoonup u$ weakly in $W^{m,\frac{n}{m}%
}\left( B_{R}\right) $ and%
\begin{equation*}
\left\vert \Delta _{g}^{\frac{m}{2}}u_{i}\right\vert ^{\frac{n}{m}}d\mu
\rightarrow \left\vert \Delta _{g}^{\frac{m}{2}}u\right\vert ^{\frac{n}{m}%
}d\mu +\sigma \text{ as measure on }B_{R},
\end{equation*}%
here $\mu $ is the measure associated with the metric $g$. If $0<p<\sigma
\left( \left\{ 0\right\} \right) ^{-\frac{m}{n-m}}$, then for some $r>0$,%
\begin{equation}
\sup_{i}\int_{B_{r}}e^{a_{m,n}p\left\vert u_{i}\right\vert ^{\frac{n}{n-m}%
}}d\mu <\infty .  \label{eq4.4}
\end{equation}
\end{proposition}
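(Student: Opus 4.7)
The plan is to mirror the proof of Proposition \ref{prop3.1} step by step, replacing $\Delta_g$ with the iterated operator $\Delta_g^{m/2}$ and using elliptic regularity of order $m$ in place of order $2$. First, by a linear change of variable and shrinking $R$ if necessary, I would arrange $g_{ij}(0)=\delta_{ij}$. Then fix $p_1\in(p,\sigma(\{0\})^{-m/(n-m)})$ and choose $\varepsilon>0$ with $(1+\varepsilon)\sigma(\{0\})<p_1^{-(n-m)/m}$ and $(1+\varepsilon)p<p_1$. Set $v_i=u_i-u$, so that $v_i\rightharpoonup 0$ weakly in $W^{m,n/m}(B_R)$ and, by Rellich, $v_i\to 0$ strongly in $W^{m-1,n/m}(B_R)$.

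The main technical lemma to establish is a comparison estimate: for $R_1>0$ sufficiently small and any $w\in W_0^{m,n/m}(B_{R_1})$,
\begin{equation*}
\|\Delta^{m/2}w\|_{L^{n/m}}^{n/m}\leq (1+c\varepsilon_1)\|\Delta_g^{m/2}w\|_{L^{n/m}}^{n/m}+c(\varepsilon_1)\|w\|_{W^{m-1,n/m}}^{n/m},
\end{equation*}
where $\varepsilon_1=\varepsilon_1(R_1)\to 0$ as $R_1\to 0^+$. This is obtained by expanding $\Delta_g^{m/2}$ in local coordinates: its principal part is a differential operator of order $m$ whose leading coefficients agree with those of $\Delta^{m/2}$ at the origin (hence differ from them by $O(\varepsilon_1)$ on $B_{R_1}$), and whose remaining terms involve only derivatives of $w$ of order $\leq m-1$. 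The resulting top-order remainder $\varepsilon_1\|D^m w\|_{L^{n/m}}^{n/m}$ is absorbed via the Calder\'on--Zygmund estimate $\|w\|_{W^{m,n/m}}\leq c_1\|\Delta^{m/2}w\|_{L^{n/m}}$ on $W_0^{m,n/m}(B_{R_1})$, in the same way as in the estimate (\ref{eq3.11}) for $m=2$.

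Applying this comparison to $w=\varphi v_i$ for $\varphi\in C_c^\infty(B_{R_1})$, and using the Leibniz rule to extract the leading piece $\varphi\,\Delta_g^{m/2}v_i$ (the commutator terms fall on derivatives of $\varphi$ multiplied by derivatives of $v_i$ of order $\leq m-1$, which vanish in the limit by Rellich), one obtains
\begin{equation*}
\limsup_{i\to\infty}\|\Delta^{m/2}(\varphi v_i)\|_{L^{n/m}}^{n/m}\leq(1+\varepsilon)\int_{B_R}|\varphi|^{n/m}\,d\sigma+c(\varepsilon)\int_{B_R}|\varphi|^{n/m}|\Delta_g^{m/2}u|^{n/m}\,d\mu.
\end{equation*}
Choosing $\varphi$ with $\varphi|_{B_r}=1$ for some $r>0$ and shrinking its support around $0$ renders the right side $<p_1^{-(n-m)/m}$, so that $\|\Delta^{m/2}(\varphi v_i)\|_{L^{n/m}}^{n/(n-m)}<1/p_1$ for all large $i$. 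Applying (\ref{eq4.1}) to $\varphi v_i$ yields $\int_{B_r}e^{a_{m,n}p_1|v_i|^{n/(n-m)}}d\mu\leq C$, and the inequality $|u_i|^{n/(n-m)}\leq(1+\varepsilon)|v_i|^{n/(n-m)}+c(\varepsilon)|u|^{n/(n-m)}$ combined with $(1+\varepsilon)p<p_1$, H\"older, and Lemma \ref{lem4.1} passes this bound from $v_i$ to $u_i$, proving (\ref{eq4.4}).

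The main obstacle is the comparison estimate in the second paragraph. For $m=2$ it reduced to a single Leibniz computation using only continuity of $g_{ij}$; for general even $m$ one needs to iterate the expansion of $\Delta_g$, track all intermediate commutators (which is why the $C^{m-1}$ regularity of $g$ is required), and verify that every term of order $m$ in $w$ either carries a coefficient bounded by $\varepsilon_1$ (absorbable) or reduces to $\varphi\,\Delta_g^{m/2}v_i$ (the desired leading piece). Once this comparison is in hand, the remainder of the argument is mechanical.
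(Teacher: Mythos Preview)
Your proposal is correct and follows exactly the route the paper intends: the paper omits the proof of Proposition~\ref{prop4.1} entirely, stating that ``the arguments are similar to those for first and second order Sobolev spaces,'' and your outline is precisely the natural adaptation of the proof of Proposition~\ref{prop3.1} to order $m$. The only minor difference is that in (\ref{eq3.9})--(\ref{eq3.11}) the paper absorbs the top-order remainder via the Calder\'on--Zygmund estimate for $\Delta_g$ rather than for the Euclidean $\Delta^{m/2}$, but either version works (indeed the paper itself uses your variant in the argument preceding the proof of Proposition~\ref{prop3.2}).
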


\begin{theorem}
\label{thm4.1}Let $M^{n}$ be a $C^{m}$ compact manifold with a $C^{m-1}$
Riemannian metric $g$. If $E\subset M$ with $\mu \left( E\right) \geq \delta
>0$, $u\in W^{m,\frac{n}{m}}\left( M\right) \backslash \left\{ 0\right\} $
with $u_{E}=0$ (here $u_{E}=\frac{1}{\mu \left( E\right) }\int_{E}ud\mu $),
then for any $0<a<a_{m,n}$,%
\begin{equation}
\int_{M}e^{a\frac{\left\vert u\right\vert ^{\frac{n}{n-m}}}{\left\Vert
\Delta ^{\frac{m}{2}}u\right\Vert _{L^{\frac{n}{m}}}^{\frac{n}{n-m}}}}d\mu
\leq c\left( a,\delta \right) <\infty .  \label{eq4.5}
\end{equation}
\end{theorem}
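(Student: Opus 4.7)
The plan is to mirror the contradiction argument used for Theorem \ref{thm3.1} (and Theorem \ref{thm1.1}), replacing the role of $\left\Vert \Delta u\right\Vert _{L^{n/2}}$ by $\left\Vert \Delta ^{m/2}u\right\Vert _{L^{n/m}}$ and invoking the higher order concentration result, Proposition \ref{prop4.1}, at the concentration step.

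First I would establish the Poincar\'{e}-type inequality
\[
\left\Vert v-v_{E}\right\Vert _{L^{n/m}}\leq c\left( \delta \right)
\left\Vert \Delta ^{m/2}v\right\Vert _{L^{n/m}}
\]
for every $v\in W^{m,n/m}\left( M\right) $, exactly as in the proofs of Theorems \ref{thm1.1} and \ref{thm3.1}: one combines the standard bound $\left\Vert v-v_{M}\right\Vert _{L^{n/m}}\leq c\left\Vert \Delta^{m/2}v\right\Vert _{L^{n/m}}$ (a consequence of higher order elliptic theory on the closed manifold $M$) with the elementary inequality $\left\Vert v-v_{E}\right\Vert _{L^{n/m}}\leq c\mu \left( E\right) ^{-m/n}\left\Vert v\right\Vert _{L^{n/m}}$, applied after replacing $v$ by $v-v_{M}$.

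Next, suppose (\ref{eq4.5}) fails. Then there exist $u_{i}\in W^{m,n/m}\left( M\right) $, $E_{i}\subset M$ with $\mu \left( E_{i}\right) \geq \delta $, $u_{i,E_{i}}=0$, $\left\Vert \Delta ^{m/2}u_{i}\right\Vert _{L^{n/m}}=1$ and $\int_{M}e^{a\left\vert u_{i}\right\vert ^{n/\left( n-m\right) }}d\mu \rightarrow \infty $. The Poincar\'{e}-type bound gives $\left\Vert u_{i}\right\Vert _{L^{n/m}}\leq c\left( \delta \right) $, and standard elliptic regularity for $\Delta ^{m/2}$ (which only needs the $C^{m-1}$ regularity of $g$) upgrades this to $\left\Vert u_{i}\right\Vert _{W^{m,n/m}}\leq c\left( \delta \right) $. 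After passing to a subsequence, $u_{i}\rightharpoonup u$ weakly in $W^{m,n/m}\left( M\right) $, strongly in $W^{m-1,n/m}\left( M\right) $, and
\[
\left\vert \Delta ^{m/2}u_{i}\right\vert ^{n/m}d\mu \rightarrow \left\vert
\Delta ^{m/2}u\right\vert ^{n/m}d\mu +\sigma
\]
as measures on $M$, for some nonnegative Radon measure $\sigma $ with $\sigma \left( M\right) \leq 1$.

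Finally, for each $x\in M$, since $0<a/a_{m,n}<\sigma \left( \left\{ x\right\} \right) ^{-m/\left( n-m\right) }$, Proposition \ref{prop4.1} applied in local coordinates centered at $x$ produces some $r=r\left( x\right) >0$ with $\sup_{i}\int_{B_{r}\left( x\right) }e^{a\left\vert u_{i}\right\vert ^{n/\left( n-m\right) }}d\mu <\infty $. A finite covering of $M$ then yields $\sup_{i}\int_{M}e^{a\left\vert u_{i}\right\vert ^{n/\left( n-m\right) }}d\mu <\infty $, contradicting the choice of $u_{i}$. The routine parts are the Poincar\'{e}-type estimate and the global $W^{m,n/m}$ elliptic upgrade; the main obstacle I expect is verifying these two items under the lowered regularity hypothesis ($C^{m}$ manifold with $C^{m-1}$ metric), but both are consequences of the Calder\'{o}n--Zygmund theory for the iterated Laplacian on such a manifold.
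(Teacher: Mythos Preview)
Your proposal is correct and follows exactly the approach the paper intends: the paper explicitly omits the proof of Theorem~\ref{thm4.1}, stating that the arguments are similar to those for the first and second order cases, and your argument is precisely the natural adaptation of the proof of Theorem~\ref{thm3.1} with $\left\Vert \Delta u\right\Vert_{L^{n/2}}$ replaced by $\left\Vert \Delta^{m/2} u\right\Vert_{L^{n/m}}$ and Proposition~\ref{prop3.1} replaced by Proposition~\ref{prop4.1}.
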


\subsubsection{Functions on manifolds with nonempty boundary}

\begin{lemma}
\label{lem4.2}Assume $u\in W^{m,\frac{n}{m}}\left( B_{R}^{+}\right)
\backslash \left\{ 0\right\} $ such that $u\left( x\right) =0$ for $%
\left\vert x\right\vert $ close to $R$ and $\partial _{n}^{k}u=0$ on $\Sigma
_{R}$ for odd number $k\in \left[ 0,m\right) $, then%
\begin{equation}
\int_{B_{R}^{+}}e^{2^{-\frac{m}{n-m}}a_{m,n}\frac{\left\vert u\right\vert ^{%
\frac{n}{n-m}}}{\left\Vert \Delta ^{\frac{m}{2}}u\right\Vert _{L^{\frac{n}{m}%
}\left( B_{R}^{+}\right) }^{\frac{n}{n-m}}}}dx\leq c\left( n\right) R^{n}.
\label{eq4.6}
\end{equation}
\end{lemma}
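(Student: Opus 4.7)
The plan is to generalize the even-reflection trick used in Lemma~\ref{lem3.3} (the $m=2$ case) to arbitrary even $m<n$, and then reduce to the full-ball Adams inequality (\ref{eq4.1}). Define the reflection
\[
v(x',x_n)=u(x',|x_n|)\quad\text{for }x=(x',x_n)\in B_R.
\]
Since $u(x)=0$ for $|x|$ close to $R$, so does $v$; once we know $v\in W^{m,n/m}(B_R)$, this gives $v\in W_0^{m,n/m}(B_R)$.

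The first key step is to verify $v\in W^{m,n/m}(B_R)$. Formally computing, for any multi-index $\alpha=(\alpha',\alpha_n)$ with $|\alpha|\le m$ the candidate weak derivative $\partial^{\alpha}v$ on $B_R\setminus\Sigma_R$ is the even reflection of $\partial^{\alpha}u$ when $\alpha_n$ is even and the odd reflection when $\alpha_n$ is odd. In order for this piecewise formula to define the genuine weak derivative on all of $B_R$ (with no singular part on $\Sigma_R$), the trace of $\partial^{\alpha}u$ on $\Sigma_R$ must vanish whenever $\alpha_n$ is odd and $|\alpha|\le m-1$. Writing $\partial^{\alpha}=\partial_1^{\alpha_1}\cdots\partial_{n-1}^{\alpha_{n-1}}\partial_n^{\alpha_n}$ and using that the tangential derivatives commute with the trace on $\Sigma_R$, this reduces to $\partial_n^{\alpha_n}u|_{\Sigma_R}=0$ for odd $\alpha_n\in[1,m-1]$, which is exactly our hypothesis. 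A standard density/cutoff approximation (mollifying in the tangential directions and truncating near $\Sigma_R$) makes this rigorous and shows that the reflected formula is the weak derivative.

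The second step is the norm identity. Since $m$ is even, I would prove by induction on $k\le m/2$ that $\Delta^{k}v$ on $B_R$ is the even reflection of $\Delta^{k}u$ on $B_R^{+}$. The base case $k=1$ is an immediate computation: for $x_n<0$ one has $\partial_i v(x)=\partial_i u(x',-x_n)$ for $i<n$ and $\partial_n v(x)=-\partial_n u(x',-x_n)$, whence $\partial_i^2 v(x)=\partial_i^2 u(x',-x_n)$ for all $i$, so $\Delta v(x)=(\Delta u)(x',-x_n)$. The inductive step repeats this with $\Delta u$ in place of $u$, using that the tangential-to-normal parity structure is preserved because $\Delta^{k}u$ satisfies the same odd-normal-derivative vanishing on $\Sigma_R$ up to the allowed order. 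Consequently
\[
\|\Delta^{m/2}v\|_{L^{n/m}(B_R)}^{n/m}=2\,\|\Delta^{m/2}u\|_{L^{n/m}(B_R^{+})}^{n/m},
\]
so $\|\Delta^{m/2}v\|_{L^{n/m}(B_R)}^{n/(n-m)}=2^{m/(n-m)}\|\Delta^{m/2}u\|_{L^{n/m}(B_R^{+})}^{n/(n-m)}$.

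The final step is to apply Adams' inequality (\ref{eq4.1}) to $v\in W_0^{m,n/m}(B_R)$ and restrict the integral to $B_R^{+}$, on which $v=u$. After substituting the norm identity, the constant $a_{m,n}$ gets divided by $2^{m/(n-m)}$, yielding
\[
\int_{B_R^{+}}\exp\!\left(2^{-\frac{m}{n-m}}a_{m,n}\frac{|u|^{\frac{n}{n-m}}}{\|\Delta^{m/2}u\|_{L^{n/m}(B_R^{+})}^{\frac{n}{n-m}}}\right)dx\le c(m,n)R^{n},
\]
which is (\ref{eq4.6}). The main obstacle is the first step: carefully justifying that the even reflection actually lies in $W^{m,n/m}(B_R)$ using only the stated trace conditions on the odd normal derivatives. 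Everything downstream is then a clean change of variables and an appeal to (\ref{eq4.1}).
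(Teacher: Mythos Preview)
Your proposal is correct and follows exactly the approach the paper indicates: even reflection in $x_n$ to reduce to Adams' inequality (\ref{eq4.1}) on the full ball, generalizing the proof of Lemma~\ref{lem3.3}. The paper gives no further details beyond ``This can be done by even extension for $x_n$ direction as in the proof of Lemma~\ref{lem3.3},'' so you have in fact supplied more than the paper does; note in particular that the norm identity can be seen directly without induction, since every term in $\Delta^{m/2}v$ involves an even number of $\partial_n$'s and is therefore the even reflection of the corresponding term for $u$.
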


This can be done by even extension for $x_{n}$ direction as in the proof of
Lemma \ref{lem3.3}.

\begin{proposition}
\label{prop4.2}Let $0<R\leq 1$, $g$ be a $C^{m-1}$ Riemannian metric on $%
\overline{B_{R}^{+}}$ such that $g_{ij}\left( 0\right) =\delta _{ij}$ for $%
1\leq i,j\leq n$. Assume $u_{i}\in W^{m,\frac{n}{m}}\left( B_{R}^{+}\right) $%
, $\partial _{n}^{k}u=0$ on $\Sigma _{R}$ for odd number $k\in \left[
0,m\right) $, $u_{i}\rightharpoonup u$ weakly in $W^{m,\frac{n}{m}}\left(
B_{R}^{+}\right) $ and%
\begin{equation*}
\left\vert \Delta _{g}^{\frac{m}{2}}u_{i}\right\vert ^{\frac{n}{m}}d\mu
\rightarrow \left\vert \Delta _{g}^{\frac{m}{2}}u\right\vert ^{\frac{n}{m}%
}d\mu +\sigma \text{ as measure on }B_{R}^{+}\text{.}
\end{equation*}%
If $0<p<\sigma \left( \left\{ 0\right\} \right) ^{-\frac{m}{n-m}}$, then
there exists $r>0$ such that%
\begin{equation}
\sup_{i}\int_{B_{r}^{+}}e^{2^{-\frac{m}{n-m}}a_{m,n}p\left\vert
u_{i}\right\vert ^{\frac{n}{n-m}}}d\mu <\infty .  \label{eq4.7}
\end{equation}
\end{proposition}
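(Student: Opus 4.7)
The plan is to mirror the proof of Proposition \ref{prop3.2} verbatim, with $\Delta$ replaced by $\Delta^{m/2}$, Lemma \ref{lem3.3} replaced by Lemma \ref{lem4.2}, and the elliptic estimate $\|v\|_{W^{2,n/2}} \leq c\|\Delta v\|_{L^{n/2}}$ replaced by its $m$-th order analogue $\|v\|_{W^{m,n/m}} \leq c\|\Delta^{m/2} v\|_{L^{n/m}}$, valid on $B_R^+$ for functions $v$ satisfying the stated odd-derivative boundary conditions (this follows by even reflection in $x_n$, which lands in $W_0^{m,n/m}(B_R)$, and standard interior elliptic theory).

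First I would fix $p_1\in\bigl(p,\sigma(\{0\})^{-m/(n-m)}\bigr)$ and choose $\varepsilon>0$ small so that $(1+\varepsilon)\sigma(\{0\})<1/p_1^{(n-m)/m}$ and $(1+\varepsilon)p<p_1$. Set $v_i=u_i-u$, so $v_i\rightharpoonup 0$ weakly in $W^{m,n/m}(B_R^+)$ and $v_i\to 0$ strongly in $W^{m-1,n/m}(B_R^+)$ by Rellich--Kondrachov. Pick a \emph{radial} cutoff $\varphi\in C_c^\infty(B_{R_1})$ with $R_1>0$ small and $\varphi\equiv 1$ on $B_r$ for some $r>0$. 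Radiality forces every odd-order $x_n$-derivative of $\varphi$ to vanish on $\Sigma_{R_1}$; combined with Leibniz and the boundary hypothesis $\partial_n^k u_i=0$ for odd $k\in[0,m)$, this guarantees $\partial_n^k(\varphi v_i)=0$ on $\Sigma_{R_1}$ for every odd $k\in[0,m)$, so that $\varphi v_i$ is eligible for Lemma \ref{lem4.2}.

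The technical heart is to establish
\begin{equation*}
\|\Delta^{m/2}(\varphi v)\|_{L^{n/m}(B_R^+)}^{n/m}
\leq (1+c\,\omega(R_1))\,\|\varphi\,\Delta_g^{m/2}v\|_{L^{n/m}(B_R^+,d\mu)}^{n/m}
 + C\|v\|_{W^{m-1,n/m}(B_R^+)}^{n/m}
\end{equation*}
for all $v$ obeying the same boundary conditions, with $\omega(R_1)\to 0$ as $R_1\to 0^+$. To obtain this, expand $\Delta_g^{m/2}$ in local coordinates as a polynomial in $\partial$ whose top-order coefficients differ from those of $\Delta^{m/2}$ by quantities of size $O\bigl(\|g-\delta\|_{C^0(B_{R_1})}\bigr)$ (using $g_{ij}(0)=\delta_{ij}$ and continuity), while the remainder is a differential operator of order $\leq m-1$ with $L^\infty$ coefficients (this is where $g\in C^{m-1}$ enters). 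The top-order $D^m v$ error is absorbed into the left-hand side via the full-regularity estimate $\|v\|_{W^{m,n/m}}\leq c\|\Delta^{m/2}v\|_{L^{n/m}}$ mentioned above; the lower-order remainder is absorbed into $\|v\|_{W^{m-1,n/m}}$. Applying this inequality to $u_i$ and $u$, combining by the triangle inequality for $\varphi v_i$, and taking $\limsup_{i\to\infty}$, the $W^{m-1,n/m}$-term vanishes and the defect-measure convergence gives
\begin{equation*}
\limsup_{i\to\infty}\|\Delta^{m/2}(\varphi v_i)\|_{L^{n/m}(B_R^+)}^{n/m}
\leq (1+\varepsilon)\int_{B_R^+}|\varphi|^{n/m}\,d\sigma
 + c(\varepsilon)\int_{B_R^+}|\varphi|^{n/m}|\Delta_g^{m/2}u|^{n/m}d\mu.
\end{equation*}
Shrinking the support of $\varphi$ (using absolute continuity of the right-hand measure) makes the right-hand side strictly less than $1/p_1^{(n-m)/m}$, so for large $i$ we get $\|\Delta^{m/2}(\varphi v_i)\|_{L^{n/m}}^{n/(n-m)}<1/p_1$.

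To conclude, Lemma \ref{lem4.2} applied to $\varphi v_i$ produces
\begin{equation*}
\int_{B_r^+}e^{2^{-m/(n-m)}a_{m,n}p_1|v_i|^{n/(n-m)}}d\mu
\leq c\int_{B_R^+}e^{2^{-m/(n-m)}a_{m,n}\,|\varphi v_i|^{n/(n-m)}/\|\Delta^{m/2}(\varphi v_i)\|_{L^{n/m}}^{n/(n-m)}}dx \leq c.
\end{equation*}
The pointwise inequality $|u_i|^{n/(n-m)}\leq (1+\varepsilon)|v_i|^{n/(n-m)}+c(\varepsilon)|u|^{n/(n-m)}$, Lemma \ref{lem4.1} (ensuring $e^{c(\varepsilon)|u|^{n/(n-m)}}\in L^q$ for all $q<\infty$), and Hölder's inequality with the relation $(1+\varepsilon)p<p_1$ then upgrade this to the desired uniform bound \eqref{eq4.7}. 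The main obstacle I expect is the comparison estimate in the middle paragraph: unlike the $m=2$ case where (\ref{eq3.10}) is a one-line pointwise identity, for general even $m$ one must carefully track how the commutator $[\Delta^{m/2},\Delta_g^{m/2}]$ and the coefficient differences distribute over iterated Laplacians, and verify that the lower-order pieces can be genuinely absorbed rather than re-introducing top-order derivatives with uncontrolled constants.
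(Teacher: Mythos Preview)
Your proposal is correct and follows exactly the approach the paper intends: the paper explicitly omits all proofs in Section \ref{sec4}, stating only that the arguments are similar to those for first and second order Sobolev spaces, so what you have written is precisely the adaptation of the proof of Proposition \ref{prop3.2} that the paper has in mind. You have also correctly identified the only genuinely new wrinkle---that the comparison between $\Delta^{m/2}$ and $\Delta_g^{m/2}$ is no longer a one-line pointwise identity as in (\ref{eq3.10}) but requires tracking the iterated coefficient expansion and invoking the full elliptic estimate $\|v\|_{W^{m,n/m}}\leq c\|\Delta^{m/2}v\|_{L^{n/m}}$ (obtained via even reflection) to absorb the top-order error; this is indeed the step the paper glosses over.
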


\begin{theorem}
\label{thm4.2}Let $\left( M^{n},g\right) $ be a $C^{m+2}$ compact Riemannian
manifold with boundary and $g$ be a $C^{m+1}$ Riemannian metric on $M$. If $%
E\subset M$ with $\mu \left( E\right) \geq \delta >0$, $u\in W^{m,\frac{n}{m}%
}\left( M\right) \backslash \left\{ 0\right\} $ with $u_{E}=0$ and $\left.
D^{k}u\left( \overset{k\text{ times}}{\overbrace{\nu ,\cdots ,\nu }}\right)
\right\vert _{\partial M}=0$ for odd number $k\in \left[ 0,m\right) $ (here $%
\nu $ is the unit outer normal direction on $\partial M$), then for any $%
0<a<a_{m,n}$,%
\begin{equation}
\int_{M}e^{2^{-\frac{m}{n-m}}a\frac{\left\vert u\right\vert ^{\frac{n}{n-m}}%
}{\left\Vert \Delta ^{\frac{m}{2}}u\right\Vert _{L^{\frac{n}{m}}}^{\frac{n}{%
n-m}}}}d\mu \leq c\left( a,\delta \right) <\infty .  \label{eq4.8}
\end{equation}
\end{theorem}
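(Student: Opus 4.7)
The plan is to follow the template of Theorem \ref{thm3.2}, combining the interior result (Proposition \ref{prop4.1}) and the boundary result (Proposition \ref{prop4.2}) via a compactness/covering argument. First I would establish a Poincare-type bound: for any $v\in W^{m,\frac{n}{m}}(M)$ satisfying $\left.D^{k}v(\nu,\cdots,\nu)\right\vert_{\partial M}=0$ for odd $k\in[0,m)$, standard elliptic theory together with a compactness argument gives
\begin{equation*}
\left\Vert v-v_{M}\right\Vert _{L^{\frac{n}{m}}}\leq c\left\Vert \Delta ^{
\frac{m}{2}}v\right\Vert _{L^{\frac{n}{m}}}.
\end{equation*}
Since $\left\Vert v-v_{E}\right\Vert _{L^{\frac{n}{m}}}\leq c(\delta)\left\Vert v\right\Vert _{L^{\frac{n}{m}}}$, replacing $v$ by $v-v_{M}$ yields $\left\Vert v-v_{E}\right\Vert _{L^{\frac{n}{m}}}\leq c(\delta)\left\Vert \Delta ^{\frac{m}{2}}v\right\Vert _{L^{\frac{n}{m}}}$.

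Next I would argue by contradiction: assume (\ref{eq4.8}) fails, so there exist $u_{i}\in W^{m,\frac{n}{m}}(M)$, $E_{i}\subset M$ with $\mu(E_{i})\geq\delta$, $u_{i,E_{i}}=0$, the boundary conditions on $u_{i}$, and $\left\Vert \Delta ^{\frac{m}{2}}u_{i}\right\Vert _{L^{\frac{n}{m}}}=1$, such that $\int_{M}e^{2^{-\frac{m}{n-m}}a\left\vert u_{i}\right\vert ^{\frac{n}{n-m}}}d\mu\to\infty$. The Poincare bound plus the standard elliptic $W^{m,\frac{n}{m}}$-estimate shows $\{u_{i}\}$ is bounded in $W^{m,\frac{n}{m}}(M)$. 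Passing to a subsequence we obtain $u_{i}\rightharpoonup u$ weakly in $W^{m,\frac{n}{m}}(M)$, and by lower semicontinuity a nonnegative Radon measure $\sigma$ on $M$ with
\begin{equation*}
\left\vert \Delta u_{i}\right\vert ^{\frac{n}{m}}d\mu \rightarrow \left\vert
\Delta u\right\vert ^{\frac{n}{m}}d\mu +\sigma, \qquad \sigma(M)\leq 1.
\end{equation*}

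For each interior point $y\in M\setminus\partial M$, since $0<\frac{a}{a_{m,n}}<\sigma(\{y\})^{-\frac{m}{n-m}}$, Proposition \ref{prop4.1} applied in a small coordinate chart gives some $r>0$ with $\sup_{i}\int_{B_{r}(y)}e^{a\left\vert u_{i}\right\vert ^{\frac{n}{n-m}}}d\mu<\infty$, which trivially dominates the $e^{2^{-\frac{m}{n-m}}a|u_{i}|^{\frac{n}{n-m}}}$ integral. The boundary points are the delicate case: for $y\in\partial M$ I would introduce Fermi (boundary-normal) coordinates, so that for $\varepsilon_{0}>0$ small,
\begin{equation*}
\phi :\partial M\times \lbrack 0,\varepsilon _{0})\rightarrow M:(\xi
,t)\mapsto \exp _{\xi }(-t\nu (\xi ))
\end{equation*}
is a $C^{m+1}$ diffeomorphism onto its image (this is where $M\in C^{m+2}$ and $g\in C^{m+1}$ are used). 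Choosing $s_{1},\dots,s_{n-1}$ geodesic normal coordinates on $\partial M$ centered at $y$ and setting $x_{i}(z)=s_{i}(\xi(z))$, $x_{n}(z)=t(z)$, we obtain local coordinates in which $\partial_{x_{n}}|_{x_{n}=0}=-\nu$ and the metric satisfies $g_{ij}(0)=\delta_{ij}$. In these coordinates the boundary conditions $\left.D^{k}u_{i}(\nu,\dots,\nu)\right\vert _{\partial M}=0$ for odd $k\in[0,m)$ translate (up to sign) into $\partial_{n}^{k}u_{i}=0$ on $\Sigma_{R}$ for the same $k$, so Proposition \ref{prop4.2} applies and yields $r>0$ with $\sup_{i}\int_{B_{r}(y)}e^{2^{-\frac{m}{n-m}}a\left\vert u_{i}\right\vert ^{\frac{n}{n-m}}}d\mu<\infty$.

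By compactness of $M$ we extract a finite subcover from $\{B_{r(y)}(y):y\in M\}$, giving $\sup_{i}\int_{M}e^{2^{-\frac{m}{n-m}}a\left\vert u_{i}\right\vert ^{\frac{n}{n-m}}}d\mu<\infty$, contradicting the choice of $u_{i}$. The main obstacle I anticipate is the bookkeeping in the boundary step: verifying that, in the Fermi coordinates constructed above, the intrinsic vanishing conditions $D^{k}u(\nu,\dots,\nu)=0$ for odd $k<m$ really do reduce to the flat half-space conditions $\partial_{n}^{k}u=0$ on $\Sigma_{R}$ required by Proposition \ref{prop4.2}, since for $k\geq 3$ the coordinate and covariant derivatives differ by Christoffel-symbol corrections; however, because $\left.\partial_{x_{n}}\right\vert_{x_{n}=0}=-\nu$ is the unit normal and the curves $t\mapsto\phi(\xi,t)$ are unit-speed geodesics, the covariant derivative $\nabla_{\partial_{x_{n}}}^{k}$ agrees with $\partial_{x_{n}}^{k}$ along the boundary when applied to scalar functions, so the reduction is clean. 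The regularity hypotheses $C^{m+2}$ and $C^{m+1}$ are precisely what is needed to make this Fermi chart of class $C^{m+1}$ and hence to apply Proposition \ref{prop4.2}.
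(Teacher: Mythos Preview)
Your proposal is correct and follows exactly the approach the paper intends: Section~\ref{sec4} explicitly omits the proof of Theorem~\ref{thm4.2} as a direct generalization of the proof of Theorem~\ref{thm3.2}, and you have carried out precisely that generalization, including the Fermi-coordinate construction at boundary points and the correct observation that the normal curves $t\mapsto\phi(\xi,t)$ are geodesics so that $D^{k}u(\nu,\dots,\nu)=(-1)^{k}\partial_{x_n}^{k}u$ and the intrinsic boundary conditions reduce to the flat ones needed in Proposition~\ref{prop4.2}. One small slip: in your displayed defect-measure convergence you wrote $|\Delta u_i|^{n/m}$ and $|\Delta u|^{n/m}$ where you mean $|\Delta^{m/2}u_i|^{n/m}$ and $|\Delta^{m/2}u|^{n/m}$.
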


\begin{lemma}
\label{lem4.3}Assume $u\in W^{m,\frac{n}{m}}\left( B_{R}^{+}\right)
\backslash \left\{ 0\right\} $ such that $u\left( x\right) =0$ for $%
\left\vert x\right\vert $ close to $R$ and $\partial _{n}^{k}u=0$ on $\Sigma
_{R}$ for even number $k\in \left[ 0,m\right) $, then%
\begin{equation}
\int_{B_{R}^{+}}e^{2^{-\frac{m}{n-m}}a_{m,n}\frac{\left\vert u\right\vert ^{%
\frac{n}{n-m}}}{\left\Vert \Delta ^{\frac{m}{2}}u\right\Vert _{L^{\frac{n}{m}%
}\left( B_{R}^{+}\right) }^{\frac{n}{n-m}}}}dx\leq c\left( n\right) R^{n}.
\label{eq4.9}
\end{equation}
\end{lemma}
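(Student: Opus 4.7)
The strategy is to mimic the proof of Lemma 3.5 (the $m=2$ Dirichlet case) but now for general even $m$, using \emph{odd} reflection across the hyperplane $\Sigma_R$ rather than the even reflection employed in Lemma 4.2.

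Concretely, I would define
\[
v(x)=\begin{cases} u(x), & x_n>0,\\ -u(x',-x_n), & x_n<0.\end{cases}
\]
The hypothesis that $u$ vanishes near $|x|=R$ makes $v$ compactly supported in $B_R$. The hypothesis that $\partial_n^k u=0$ on $\Sigma_R$ for every even $k\in[0,m)$ is exactly the matching condition required for the odd extension to have weak derivatives of order up to $m$: a direct computation shows the odd-order normal derivatives of $v$ cross $\Sigma_R$ continuously (since they are themselves even in $x_n$, extended in the natural way), while the even-order normal derivatives acquire a sign flip, so they must vanish at $x_n=0$ for $v$ to lie in $W^{m,n/m}(B_R)$. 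Combined with compact support, this yields $v\in W_0^{m,n/m}(B_R)$.

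Next I would observe that the Laplacian preserves parity in $x_n$: if a function is odd in $x_n$, so is its Laplacian. Iterating $m/2$ times (recall $m$ is even), $\Delta^{m/2}v$ is odd in $x_n$. Consequently $|\Delta^{m/2}v|^{n/m}$ is even in $x_n$, whence
\[
\|\Delta^{m/2}v\|_{L^{n/m}(B_R)}^{n/m}=2\|\Delta^{m/2}u\|_{L^{n/m}(B_R^+)}^{n/m},
\]
and therefore
\[
\|\Delta^{m/2}v\|_{L^{n/m}(B_R)}^{n/(n-m)}=2^{m/(n-m)}\|\Delta^{m/2}u\|_{L^{n/m}(B_R^+)}^{n/(n-m)}.
\]
Since $|v|=|u|$ on $B_R^+$ and the integrand is even in $x_n$, Adams' inequality (4.1) applied to $v\in W_0^{m,n/m}(B_R)$ gives
\[
\int_{B_R^+}\exp\!\Bigl(2^{-\frac{m}{n-m}}a_{m,n}\frac{|u|^{n/(n-m)}}{\|\Delta^{m/2}u\|_{L^{n/m}(B_R^+)}^{n/(n-m)}}\Bigr)dx=\int_{B_R^+}\exp\!\Bigl(a_{m,n}\frac{|v|^{n/(n-m)}}{\|\Delta^{m/2}v\|_{L^{n/m}(B_R)}^{n/(n-m)}}\Bigr)dx\leq c(n)R^n.
\]

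The main subtlety I expect is the verification in step one that odd extension actually places $v$ in $W^{m,n/m}(B_R)$: one must argue cleanly that the vanishing of all even-order pure normal derivatives on $\Sigma_R$ (and only these) is sufficient, by induction on the order of the derivative and by splitting mixed derivatives into tangential and normal parts. The parity computation in step two is straightforward once one notes that tangential second derivatives $\partial_{x_i}^2$ (for $i<n$) commute with the odd reflection, as does $\partial_{x_n}^2$, so each application of $\Delta$ preserves oddness.
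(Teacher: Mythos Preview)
Your proposal is correct and follows exactly the approach the paper indicates: the paper's proof of Lemma~4.3 consists of the single sentence ``This can be done by odd extension in $x_n$ direction as in the proof of Lemma~\ref{lem3.5},'' and your argument is precisely the natural generalization of that proof to even $m>2$. The verification that the odd extension lies in $W_0^{m,n/m}(B_R)$ under the stated boundary conditions, and the parity observation that $\Delta^{m/2}$ preserves oddness in $x_n$, are the only points requiring care, and you have identified both correctly.
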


This can be done by odd extension in $x_{n}$ direction as in the proof of
Lemma \ref{lem3.5}.

\begin{proposition}
\label{prop4.3}Let $0<R\leq 1$, $g$ be a $C^{m-1}$ Riemannian metric on $%
\overline{B_{R}^{+}}$ such that $g_{ij}\left( 0\right) =\delta _{ij}$ for $%
1\leq i,j\leq n$. Assume $u_{i}\in W^{m,\frac{n}{m}}\left( B_{R}^{+}\right) $%
, $\partial _{n}^{k}u=0$ on $\Sigma _{R}$ for even number $k\in \left[
0,m\right) $, $u_{i}\rightharpoonup u$ weakly in $W^{m,\frac{n}{m}}\left(
B_{R}^{+}\right) $ and%
\begin{equation*}
\left\vert \Delta _{g}^{\frac{m}{2}}u_{i}\right\vert ^{\frac{n}{m}}d\mu
\rightarrow \left\vert \Delta _{g}^{\frac{m}{2}}u\right\vert ^{\frac{n}{m}%
}d\mu +\sigma \text{ as measure on }B_{R}^{+}\text{.}
\end{equation*}%
If $0<p<\sigma \left( \left\{ 0\right\} \right) ^{-\frac{m}{n-m}}$, then
there exists $r>0$ such that%
\begin{equation}
\sup_{i}\int_{B_{r}^{+}}e^{2^{-\frac{m}{n-m}}a_{m,n}p\left\vert
u_{i}\right\vert ^{\frac{n}{n-m}}}d\mu <\infty .  \label{eq4.10}
\end{equation}
\end{proposition}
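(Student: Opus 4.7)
The plan is to follow almost verbatim the proof of Proposition 3.3, upgrading the second-order machinery to order $m$ and invoking Lemma 4.3 in place of Lemma 3.5. First, fix $p_{1}\in \left( p,\sigma \left( \left\{ 0\right\} \right) ^{-\frac{m}{n-m}}\right) $ and choose $\varepsilon >0$ small enough that $\left( 1+\varepsilon \right) \sigma \left( \left\{ 0\right\} \right) <p_{1}^{-\frac{n-m}{m}}$ and $\left( 1+\varepsilon \right) p<p_{1}$. Set $v_{i}=u_{i}-u$; by Rellich--Kondrachov, $v_{i}\rightharpoonup 0$ weakly in $W^{m,\frac{n}{m}}\left( B_{R}^{+}\right) $ and $v_{i}\rightarrow 0$ in $W^{m-1,\frac{n}{m}}\left( B_{R}^{+}\right) $, and $v_{i}$ inherits the Navier-type conditions $\partial _{n}^{2j}v_{i}|_{\Sigma _{R}}=0$ for $2j<m$.

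Fix a small $R_{1}<R$ and a radial cutoff $\varphi \in C_{c}^{\infty }\left( B_{R_{1}}\right) $ with $\varphi \equiv 1$ on $B_{r}$ for some $r>0$ to be determined. Since $\varphi $ is radial, its odd-order $x_{n}$-derivatives vanish on $\Sigma _{R}$; the Leibniz formula then gives $\partial _{n}^{2j}\left( \varphi v_{i}\right) |_{\Sigma _{R}}=0$ for $2j<m$, so $\varphi v_{i}$ is admissible for Lemma 4.3. To compare Laplacians we use $g_{ij}\left( 0\right) =\delta _{ij}$ and $g\in C^{m-1}$ to get pointwise $\left\vert \Delta ^{\frac{m}{2}}w-\Delta _{g}^{\frac{m}{2}}w\right\vert \leq \varepsilon _{1}\left( R_{1}\right) \left\vert D^{m}w\right\vert +C\sum_{k<m}\left\vert D^{k}w\right\vert $ with $\varepsilon _{1}\left( R_{1}\right) \rightarrow 0$. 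Coupled with the elliptic estimate $\left\Vert D^{m}w\right\Vert _{L^{\frac{n}{m}}}\leq C\left\Vert \Delta ^{\frac{m}{2}}w\right\Vert _{L^{\frac{n}{m}}}$ (obtained via odd extension across $\Sigma _{R}$, which converts a compactly supported function with Navier conditions into an element of $W_{0}^{m,\frac{n}{m}}\left( B_{R}\right) $), one absorbs the $\varepsilon _{1}$ term and, after applying the triangle inequality $\Delta _{g}^{\frac{m}{2}}\left( \varphi v_{i}\right) =\varphi \Delta _{g}^{\frac{m}{2}}v_{i}+[\Delta _{g}^{\frac{m}{2}},\varphi ]v_{i}$, obtains for $R_{1}$ small
\begin{equation*}
\left\Vert \Delta ^{\frac{m}{2}}\left( \varphi v_{i}\right) \right\Vert _{L^{\frac{n}{m}}}^{\frac{n}{m}}\leq \left( 1+\tfrac{\varepsilon }{2}\right) \left\Vert \varphi \Delta _{g}^{\frac{m}{2}}u_{i}\right\Vert _{L^{\frac{n}{m}}\left( d\mu \right) }^{\frac{n}{m}}+C\left( \varepsilon \right) \left\Vert \varphi \Delta _{g}^{\frac{m}{2}}u\right\Vert _{L^{\frac{n}{m}}}^{\frac{n}{m}}+C\left( \varepsilon \right) \left\Vert v_{i}\right\Vert _{W^{m-1,\frac{n}{m}}}^{\frac{n}{m}}.
\end{equation*}
Taking $\limsup _{i}$ and using the weak convergence of measures together with $v_{i}\rightarrow 0$ in $W^{m-1,\frac{n}{m}}$ yields
\begin{equation*}
\limsup_{i\to\infty }\left\Vert \Delta ^{\frac{m}{2}}\left( \varphi v_{i}\right) \right\Vert _{L^{\frac{n}{m}}}^{\frac{n}{m}}\leq \left( 1+\varepsilon \right) \int_{B_R^+}\left\vert \varphi \right\vert ^{\frac{n}{m}}d\sigma +C\left( \varepsilon \right) \int_{B_R^+}\left\vert \varphi \right\vert ^{\frac{n}{m}}\left\vert \Delta _{g}^{\frac{m}{2}}u\right\vert ^{\frac{n}{m}}d\mu .
\end{equation*}

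By shrinking the support of $\varphi $ (while keeping $\varphi \equiv 1$ on a smaller ball $B_{r}$) we force the right-hand side to be strictly less than $p_{1}^{-\frac{n-m}{m}}$, so for $i$ large, $\left\Vert \Delta ^{\frac{m}{2}}\left( \varphi v_{i}\right) \right\Vert _{L^{\frac{n}{m}}}^{\frac{n}{n-m}}<1/p_{1}$. Applying Lemma 4.3 to $\varphi v_{i}$ gives $\int_{B_{r}^{+}}e^{2^{-\frac{m}{n-m}}a_{m,n}p_{1}\left\vert v_{i}\right\vert ^{\frac{n}{n-m}}}d\mu \leq C$. The elementary bound $\left\vert u_{i}\right\vert ^{\frac{n}{n-m}}\leq \left( 1+\varepsilon \right) \left\vert v_{i}\right\vert ^{\frac{n}{n-m}}+C\left( \varepsilon \right) \left\vert u\right\vert ^{\frac{n}{n-m}}$, combined with $\left( 1+\varepsilon \right) p<p_{1}$, the half-ball analog of Lemma 4.1 applied to the fixed function $u$ (via extension to the full ball), and Hölder's inequality, then yields the desired uniform bound $\sup _{i}\int_{B_{r}^{+}}e^{2^{-\frac{m}{n-m}}a_{m,n}p\left\vert u_{i}\right\vert ^{\frac{n}{n-m}}}d\mu <\infty $.

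The main technical obstacle will be the elliptic comparison step: establishing the pointwise bound on $\Delta ^{\frac{m}{2}}-\Delta _{g}^{\frac{m}{2}}$ at order $m$ and the accompanying Calderón--Zygmund-type estimate $\left\Vert D^{m}\cdot \right\Vert _{L^{n/m}}\lesssim \left\Vert \Delta ^{\frac{m}{2}}\cdot \right\Vert _{L^{n/m}}$ under Navier conditions. The odd-extension trick across $\Sigma _{R}$ is the natural device for reducing the latter to the $W_{0}^{m,n/m}$-case on the full ball, where the standard estimate applies; the Leibniz-based verification of boundary compatibility for the radial cutoff $\varphi v_{i}$ is a lower-order but essential book-keeping step that makes Lemma 4.3 applicable.
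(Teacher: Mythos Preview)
Your proposal is correct and follows exactly the template the paper has in mind: the paper omits the proof of Proposition~4.3 but declares that all Section~4 arguments are parallel to the second-order case, and for Proposition~3.3 specifically it says the proof is ``almost identical to the proof of Proposition~\ref{prop3.2} (using Lemma~\ref{lem3.5} instead of Lemma~\ref{lem3.3}).'' You have carried out precisely this substitution at order $m$, replacing Lemma~3.5 by Lemma~4.3 and the even-extension Calder\'on--Zygmund estimate (3.34) by its odd-extension analogue, while retaining the radial cutoff to preserve the Navier boundary conditions---all of which matches the paper's implicit scheme.
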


\begin{theorem}
\label{thm4.3}Let $\left( M^{n},g\right) $ be a $C^{m+2}$ compact Riemannian
manifold with boundary and $g$ be a $C^{m+1}$ Riemannian metric on $M$. If $%
E\subset M$ with $\mu \left( E\right) \geq \delta >0$, $u\in W^{m,\frac{n}{m}%
}\left( M\right) \backslash \left\{ 0\right\} $ with $u_{E}=0$ and $\left.
D^{k}u\left( \overset{k\text{ times}}{\overbrace{\nu ,\cdots ,\nu }}\right)
\right\vert _{\partial M}=0$ for even number $k\in \left[ 0,m\right) $ (here 
$\nu $ is the unit outer normal direction on $\partial M$), then for any $%
0<a<a_{m,n}$,%
\begin{equation}
\int_{M}e^{2^{-\frac{m}{n-m}}a\frac{\left\vert u\right\vert ^{\frac{n}{n-m}}%
}{\left\Vert \Delta ^{\frac{m}{2}}u\right\Vert _{L^{\frac{n}{m}}}^{\frac{n}{%
n-m}}}}d\mu \leq c\left( a,\delta \right) <\infty .  \label{eq4.11}
\end{equation}
\end{theorem}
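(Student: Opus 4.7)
The plan is to mimic the contradiction-and-blow-up argument already carried out for Theorems 3.2 and 3.3 (and indicated for Theorem 4.2), with Proposition 4.3 playing the role that Proposition 3.3 played there. The preliminary step is the Poincar\'{e}--elliptic estimate. Since $k=0$ is in the list of prescribed even orders, every admissible $u$ satisfies $u|_{\partial M}=0$; combined with the vanishing of $D^{k}u(\nu,\ldots,\nu)$ for the remaining even $k\in[0,m)$, this is a Navier-type boundary condition for $\Delta^{m/2}$, so standard elliptic regularity gives $\|v\|_{W^{m,n/m}(M)}\leq c\,\|\Delta^{m/2}v\|_{L^{n/m}(M)}$ for any such $v$. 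The same mean-subtraction trick used in Theorem 3.2 then yields $\|v-v_{E}\|_{L^{n/m}}\leq c(\delta)\,\|\Delta^{m/2}v\|_{L^{n/m}}$.

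Assume (\ref{eq4.11}) fails. Choose $u_{i}\in W^{m,n/m}(M)$ obeying all the hypotheses with $\|\Delta^{m/2}u_{i}\|_{L^{n/m}}=1$, $u_{i,E_{i}}=0$, and $\int_{M}e^{2^{-m/(n-m)}a|u_{i}|^{n/(n-m)}}d\mu\to\infty$. The preliminary estimate bounds $u_{i}$ in $W^{m,n/m}(M)$, so along a subsequence $u_{i}\rightharpoonup u$ weakly in $W^{m,n/m}(M)$ and $|\Delta_{g}^{m/2}u_{i}|^{n/m}d\mu\to|\Delta_{g}^{m/2}u|^{n/m}d\mu+\sigma$ for some finite Radon measure $\sigma$ with $\sigma(M)\leq 1$. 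For any interior point $y\in M\setminus\partial M$ the inequality $a/a_{m,n}<1\leq\sigma(\{y\})^{-m/(n-m)}$ allows Proposition \ref{prop4.1} in a chart centered at $y$ to supply $r>0$ with $\sup_{i}\int_{B_{r}(y)}e^{a|u_{i}|^{n/(n-m)}}d\mu<\infty$, which is strictly stronger than what is needed.

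For $y\in\partial M$, set up Fermi coordinates near $y$ exactly as in the proof of Theorem \ref{thm3.2}: pick normal coordinates $s_{1},\ldots,s_{n-1}$ on $\partial M$ at $y$ and extend by the inward normal geodesic flow $\phi(\xi,t)=\exp_{\xi}(-t\nu(\xi))$, setting $x_{i}=s_{i}$ and $x_{n}=t$. In these coordinates $g_{nn}\equiv 1$, $g_{in}\equiv 0$ for $i<n$, and $\nabla_{\partial_{n}}\partial_{n}=0$, so $D^{k}u(\partial_{n},\ldots,\partial_{n})=\partial_{n}^{k}u$; since $\partial_{n}|_{x_{n}=0}=-\nu$, the hypothesis $D^{k}u(\nu,\ldots,\nu)|_{\partial M}=0$ for even $k\in[0,m)$ becomes $\partial_{n}^{k}u_{i}|_{\Sigma_{R}}=0$ for even $k\in[0,m)$, which is precisely the hypothesis of Proposition \ref{prop4.3}. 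That proposition then produces $r>0$ with $\sup_{i}\int_{B_{r}^{+}}e^{2^{-m/(n-m)}a|u_{i}|^{n/(n-m)}}d\mu<\infty$. A finite cover of $M$ by such interior balls and boundary half-balls bounds $\sup_{i}\int_{M}e^{2^{-m/(n-m)}a|u_{i}|^{n/(n-m)}}d\mu$, contradicting the choice of $u_{i}$.

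The main obstacle is the coordinate translation of the boundary condition at $\partial M$: one has to verify that in Fermi coordinates $D^{k}u(\nu,\ldots,\nu)|_{\partial M}$ really coincides with $\partial_{n}^{k}u|_{\Sigma_{R}}$ up to sign for all relevant $k$, and that this translation survives for $u\in W^{m,n/m}$ rather than smooth $u$ (by density and the fact that the normal traces are well defined under Navier conditions). The $C^{m+1}$ regularity of $g$ and the $C^{m+2}$ regularity of $M$ are exactly what make $\phi$ a sufficiently smooth diffeomorphism and justify this identification.
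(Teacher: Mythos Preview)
Your proposal is correct and follows essentially the same route the paper indicates: the paper omits the proof of Theorem~\ref{thm4.3} entirely, stating that all Section~\ref{sec4} arguments are parallel to the second-order case, and your contradiction argument (bound the sequence via Poincar\'e plus elliptic regularity, extract a defect measure, apply Proposition~\ref{prop4.1} at interior points and Proposition~\ref{prop4.3} in Fermi coordinates at boundary points, then cover) is exactly the template of Theorems~\ref{thm3.2} and~\ref{thm3.3}. One minor point: rather than asserting the direct coercivity estimate $\|v\|_{W^{m,n/m}}\le c\|\Delta^{m/2}v\|_{L^{n/m}}$ under these boundary conditions, it is safer (and closer to the paper's pattern in Theorem~\ref{thm3.2}) to first establish $\|v-v_M\|_{L^{n/m}}\le c\|\Delta^{m/2}v\|_{L^{n/m}}$ by compactness and then invoke the standard estimate $\|v\|_{W^{m,n/m}}\le c(\|\Delta^{m/2}v\|_{L^{n/m}}+\|v\|_{L^{n/m}})$, since the boundary conditions here are not one of the classical named types and the direct inequality deserves its own justification.
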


\begin{proposition}
\label{prop4.4}Let $0<R\leq 1$, $g$ be a $C^{m-1}$ Riemannian metric on $%
\overline{B_{R}^{+}}$. Assume $u_{i}\in W^{m,\frac{n}{m}}\left(
B_{R}^{+}\right) $, $\left. \partial _{n}^{k}u\right\vert _{\Sigma _{R}}=0$
for integer $k\in \left[ 0,m\right) $, $u_{i}\rightharpoonup u$ weakly in $%
W^{m,\frac{n}{m}}\left( B_{R}^{+}\right) $ and%
\begin{equation*}
\left\vert \Delta _{g}^{\frac{m}{2}}u_{i}\right\vert ^{\frac{n}{m}}d\mu
\rightarrow \left\vert \Delta _{g}^{\frac{m}{2}}u\right\vert ^{\frac{n}{m}%
}d\mu +\sigma \text{ as measure on }B_{R}^{+}\text{.}
\end{equation*}%
If $0<p<\sigma \left( \left\{ 0\right\} \right) ^{-\frac{2}{n-2}}$, then
there exists $r>0$ such that%
\begin{equation}
\sup_{i}\int_{B_{r}^{+}}e^{a_{m,n}p\left\vert u_{i}\right\vert ^{\frac{n}{n-m%
}}}d\mu <\infty .  \label{eq4.12}
\end{equation}
\end{proposition}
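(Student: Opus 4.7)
The strategy mirrors the zero-extension reduction used in Propositions~\ref{prop2.2} and~\ref{prop3.4}: extend $u_i$ by zero across $\Sigma_R$ and apply the interior result (Proposition~\ref{prop4.1}) on the full ball. The boundary hypotheses $\partial_n^k u_i|_{\Sigma_R}=0$ for all integers $k\in[0,m)$ are precisely the Dirichlet-type conditions that permit the zero extension to remain in $W^{m,\frac{n}{m}}$.

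First I would extend $g$ to a $C^{m-1}$ Riemannian metric $h$ on $\overline{B_R}$ using standard extension lemmas for $C^{m-1}$ symmetric tensor fields. Then define
\begin{equation*}
v_i(x)=\begin{cases} u_i(x), & x\in B_R^+,\\ 0, & x\in B_R\setminus B_R^+,\end{cases}\qquad v(x)=\begin{cases} u(x), & x\in B_R^+,\\ 0, & x\in B_R\setminus B_R^+,\end{cases}
\end{equation*}
and the measure $\tau$ on $B_R$ by $\tau(E)=\sigma(E\cap B_R^+)$. Since every normal trace of order less than $m$ of $u_i$ and $u$ vanishes on $\Sigma_R$, iterated integration against compactly supported test functions across $\Sigma_R$ shows that the distributional derivatives of $v_i$ (resp.\ $v$) of orders $\le m$ are just the corresponding derivatives of $u_i$ (resp.\ $u$) in the upper half and identically $0$ in the lower half. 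Hence $v_i,v\in W^{m,\frac{n}{m}}(B_R)$, the sequence $\{v_i\}$ is bounded, and $v_i\rightharpoonup v$ weakly in $W^{m,\frac{n}{m}}(B_R)$.

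Second, because $h=g$ in $B_R^+$ we have $\Delta_h^{m/2}v_i=\Delta_g^{m/2}u_i$ there, while $\Delta_h^{m/2}v_i=0$ on $B_R\setminus\overline{B_R^+}$. Combined with the hypothesized weak convergence on $B_R^+$, this gives
\begin{equation*}
\left|\Delta_h^{m/2}v_i\right|^{\frac{n}{m}}d\mu_h\to \left|\Delta_h^{m/2}v\right|^{\frac{n}{m}}d\mu_h+\tau\quad\text{as measures on }B_R.
\end{equation*}
Since $0\in\overline{\Sigma_R}$, one has $\tau(\{0\})=\sigma(\{0\})$, so the hypothesis $0<p<\sigma(\{0\})^{-\frac{m}{n-m}}$ is preserved. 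Applying Proposition~\ref{prop4.1} to $\{v_i\}$ on $(B_R,h)$ yields some $r>0$ with
\begin{equation*}
\sup_i\int_{B_r}e^{a_{m,n}p\left|v_i\right|^{\frac{n}{n-m}}}d\mu_h<\infty,
\end{equation*}
and restricting the integral to $B_r^+$ (where $v_i=u_i$ and $\mu_h=\mu$) gives the claimed bound.

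The principal technical point is ensuring that the naive zero-extension $v_i$ genuinely belongs to $W^{m,\frac{n}{m}}(B_R)$ and not merely to $W^{m-1,\frac{n}{m}}(B_R)$; this is exactly why all the Dirichlet traces $\partial_n^k u_i|_{\Sigma_R}$ for $k=0,1,\dots,m-1$ must vanish. Once this extension step is in place, no new analytic obstacle arises and the remainder of the argument is a direct transcription of the second-order case in Proposition~\ref{prop3.4}.
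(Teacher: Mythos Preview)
Your proposal is correct and follows exactly the approach the paper intends: zero-extend across $\Sigma_R$ (using the full set of Dirichlet traces to stay in $W^{m,\frac{n}{m}}$), extend the metric, and reduce to the interior Proposition~\ref{prop4.1}, precisely mirroring the proofs of Propositions~\ref{prop2.2} and~\ref{prop3.4}. You have also silently corrected the exponent in the hypothesis to $\sigma(\{0\})^{-\frac{m}{n-m}}$, which is indeed what is needed (the $-\frac{2}{n-2}$ in the stated proposition is a typographical carryover from the $m=2$ case).
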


\begin{theorem}
\label{thm4.4}Let $M^{n}$ be a $C^{m}$ compact manifold with boundary and $g$
be a $C^{m-1}$ Riemannian metric on $M$. We denote%
\begin{equation}
\kappa _{m}\left( M,g\right) =\inf_{u\in W_{0}^{m,\frac{n}{m}}\left(
M\right) \backslash \left\{ 0\right\} }\frac{\left\Vert \Delta ^{\frac{m}{2}%
}u\right\Vert _{L^{\frac{n}{m}}}}{\left\Vert u\right\Vert _{L^{\frac{n}{m}}}}%
.  \label{eq4.13}
\end{equation}%
Assume $0\leq \kappa <\kappa _{m}\left( M,g\right) $, $0<a<a_{m,n}$, $u\in
W_{0}^{m,\frac{n}{m}}\left( M\right) $ and%
\begin{equation}
\left\Vert \Delta u\right\Vert _{L^{\frac{n}{m}}}^{\frac{n}{m}}-\kappa ^{%
\frac{n}{m}}\left\Vert u\right\Vert _{L^{\frac{n}{m}}}^{\frac{n}{m}}\leq 1,
\label{eq4.14}
\end{equation}%
then%
\begin{equation}
\int_{M}e^{a\left\vert u\right\vert ^{\frac{n}{n-m}}}d\mu \leq c\left(
\kappa ,a\right) <\infty .  \label{eq4.15}
\end{equation}
\end{theorem}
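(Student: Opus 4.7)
The plan is to argue by contradiction, exactly parallel to the proof of Example \ref{ex1.1} and Theorem \ref{thm3.4}, but using Proposition \ref{prop4.1} for interior points and Proposition \ref{prop4.4} for boundary points. Suppose (\ref{eq4.15}) fails. Then we obtain a sequence $u_{i}\in W_{0}^{m,\frac{n}{m}}\left( M\right) $ with $\left\Vert \Delta^{\frac{m}{2}} u_{i}\right\Vert _{L^{\frac{n}{m}}}^{\frac{n}{m}}-\kappa ^{\frac{n}{m}}\left\Vert u_{i}\right\Vert _{L^{\frac{n}{m}}}^{\frac{n}{m}}\leq 1$ and $\int_{M}e^{a\left\vert u_{i}\right\vert ^{\frac{n}{n-m}}}d\mu \rightarrow \infty $. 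First I would use the definition of $\kappa _{m}\left( M,g\right) $ to bound $\left\Vert u_{i}\right\Vert _{L^{\frac{n}{m}}}$: since $\left( \kappa _{m}\left( M,g\right) ^{\frac{n}{m}}-\kappa ^{\frac{n}{m}}\right) \left\Vert u_{i}\right\Vert _{L^{\frac{n}{m}}}^{\frac{n}{m}}\leq 1$, both $\left\Vert u_{i}\right\Vert _{L^{\frac{n}{m}}}$ and $\left\Vert \Delta^{\frac{m}{2}} u_{i}\right\Vert _{L^{\frac{n}{m}}}$ stay bounded.

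Next I would invoke standard elliptic estimates for $W_{0}^{m,\frac{n}{m}}\left( M\right) $ to conclude $\left\Vert u_{i}\right\Vert _{W^{m,\frac{n}{m}}}\leq c\left( \left\Vert \Delta^{\frac{m}{2}} u_{i}\right\Vert _{L^{\frac{n}{m}}}+\left\Vert u_{i}\right\Vert _{L^{\frac{n}{m}}}\right) \leq c\left( \kappa \right) $. After passing to a subsequence, I get $u\in W_{0}^{m,\frac{n}{m}}\left( M\right) $ with $u_{i}\rightharpoonup u$ weakly in $W^{m,\frac{n}{m}}\left( M\right) $, strongly in $W^{m-1,\frac{n}{m}}\left( M\right) $, together with a finite Borel measure $\sigma $ on $M$ such that
\begin{equation*}
\left\vert \Delta^{\frac{m}{2}} u_{i}\right\vert ^{\frac{n}{m}}d\mu \rightarrow \left\vert \Delta^{\frac{m}{2}} u\right\vert ^{\frac{n}{m}}d\mu +\sigma
\end{equation*}
as measure. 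The energy bound, combined with $\left\Vert \Delta^{\frac{m}{2}} u\right\Vert _{L^{\frac{n}{m}}}^{\frac{n}{m}}-\kappa ^{\frac{n}{m}}\left\Vert u\right\Vert _{L^{\frac{n}{m}}}^{\frac{n}{m}}\geq 0$, forces $\sigma \left( M\right) \leq 1$, so $\sigma \left( \left\{ x\right\} \right) \leq 1$ everywhere.

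For each $x\in M$ we then have $0<\frac{a}{a_{m,n}}<1\leq \sigma \left( \left\{ x\right\} \right) ^{-\frac{m}{n-m}}$. For $x\in M\setminus \partial M$, choose a coordinate chart around $x$ and apply Proposition \ref{prop4.1} to find $r>0$ with $\sup _{i}\int_{B_{r}\left( x\right) }e^{a\left\vert u_{i}\right\vert ^{\frac{n}{n-m}}}d\mu <\infty $. For $x\in \partial M$, I would use boundary normal coordinates $\left( \xi ,t\right) \mapsto \exp _{\xi }\left( -t\nu \left( \xi \right) \right) $ as in the proof of Theorem \ref{thm3.2}, followed by an orthonormal tangential frame at $x$, so that in these coordinates $g_{ij}\left( 0\right) =\delta _{ij}$ and $\partial M$ corresponds to $\Sigma _{R}$. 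In these coordinates, $u_{i}\in W_{0}^{m,\frac{n}{m}}\left( M\right) $ translates into $\partial _{n}^{k}u_{i}=0$ on $\Sigma _{R}$ for every integer $k\in \left[ 0,m\right) $, which is exactly the hypothesis of Proposition \ref{prop4.4}; applying it yields an $r>0$ with $\sup _{i}\int_{B_{r}\left( x\right) }e^{a\left\vert u_{i}\right\vert ^{\frac{n}{n-m}}}d\mu <\infty $.

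A finite covering of $M$ then gives $\sup _{i}\int_{M}e^{a\left\vert u_{i}\right\vert ^{\frac{n}{n-m}}}d\mu <\infty $, contradicting the choice of $u_{i}$. The main technical point I expect to be delicate is the boundary step: I need to verify that the full Dirichlet condition $u\in W_{0}^{m,\frac{n}{m}}\left( M\right) $ really is equivalent, in the boundary normal chart, to the vanishing of all normal derivatives $\partial _{n}^{k}u_{i}$ on $\Sigma _{R}$ for $0\leq k<m$ in the trace sense (so that Proposition \ref{prop4.4} applies). This is a standard fact about traces in Sobolev spaces on $C^{m}$ manifolds with $C^{m-1}$ coordinate changes, and is what forces the regularity hypotheses on $M$ and $g$ in the statement. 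Everything else is a direct adaptation of the contradiction/covering scheme already used repeatedly in the paper.
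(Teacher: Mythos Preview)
Your overall strategy is exactly what the paper intends: contradiction, boundedness from $\kappa<\kappa_m(M,g)$, elliptic estimates, extraction of a defect measure with $\sigma(M)\le 1$, then Proposition~\ref{prop4.1} at interior points and Proposition~\ref{prop4.4} at boundary points, followed by a covering argument. This matches the scheme of Example~\ref{ex1.1} and Theorem~\ref{thm3.4}.

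One correction at the boundary step: you invoke boundary normal coordinates ``as in the proof of Theorem~\ref{thm3.2}'' to arrange $g_{ij}(0)=\delta_{ij}$. This is unnecessary and in fact not supported by the stated regularity. Note that Proposition~\ref{prop4.4}, unlike Propositions~\ref{prop4.2} and~\ref{prop4.3}, does \emph{not} assume $g_{ij}(0)=\delta_{ij}$; its proof (parallel to Propositions~\ref{prop2.2} and~\ref{prop3.4}) proceeds by zero extension to the full ball and an application of Proposition~\ref{prop4.1}, where the normalization is handled internally by a linear change of variable. Boundary normal coordinates cost regularity through the exponential map (this is why Theorem~\ref{thm3.2} needs a $C^{4}$ manifold and $C^{3}$ metric for $m=2$, i.e.\ $C^{m+2}$ and $C^{m+1}$), whereas Theorem~\ref{thm4.4} only assumes $C^{m}$ and $C^{m-1}$. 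The fix is simple: take any $C^{m}$ boundary chart sending $\partial M$ to $\Sigma_R$. The metric stays $C^{m-1}$ in such a chart, and since $u_i\in W_0^{m,n/m}(M)$ is the closure of $C_c^\infty(M)$, all partial derivatives of order $\le m-1$ vanish on $\Sigma_R$ in the trace sense; in particular $\partial_n^k u_i|_{\Sigma_R}=0$ for $0\le k<m$, which is precisely the hypothesis of Proposition~\ref{prop4.4}. With this adjustment your argument goes through verbatim.
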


\subsection{$W^{m,\frac{n}{m}}\left( M^{n}\right) $ for odd $m$}

Let $m\in \mathbb{N}$ be an odd number strictly less than $n$. For $R>0$, $%
u\in W_{0}^{m,\frac{n}{m}}\left( B_{R}\right) \backslash \left\{ 0\right\} $%
, it is shown in \cite{A} that%
\begin{equation}
\int_{B_{R}}\exp \left( a_{m,n}\frac{\left\vert u\right\vert ^{\frac{n}{n-m}}%
}{\left\Vert \nabla \Delta ^{\frac{m-1}{2}}u\right\Vert _{L^{\frac{n}{m}}}^{%
\frac{n}{n-m}}}\right) dx\leq c\left( m,n\right) R^{n}.  \label{eq4.16}
\end{equation}%
Here%
\begin{equation}
a_{m,n}=\frac{n}{\left\vert \mathbb{S}^{n-1}\right\vert }\left( \frac{\pi ^{%
\frac{n}{2}}2^{m}\Gamma \left( \frac{m+1}{2}\right) }{\Gamma \left( \frac{%
n-m+1}{2}\right) }\right) ^{\frac{n}{n-m}}.  \label{eq4.17}
\end{equation}

With (\ref{eq4.16}) at hands, we can derive similar results for $m$ odd
cases. Indeed if we use $\left\Vert \nabla \Delta ^{\frac{m-1}{2}%
}u\right\Vert _{L^{\frac{n}{m}}}$ instead of $\left\Vert \Delta ^{\frac{m}{2}%
}u\right\Vert _{L^{\frac{n}{m}}}$ (in the $m$ even case), we can get the
corresponding statements. The details are left to interested readers.


\begin{thebibliography}{Au1}
\bibitem[A]{A} D. Adams. \textit{A sharp inequality of J. Moser for higher
order derivatives. }Ann. of Math. (2) \textbf{128} (1988), no. 2, 385--398.

\bibitem[AD]{AD} Adimurthi, O. Druet. \textit{Blow-up analysis in dimension
2 and a sharp form of Trudinger-Moser inequality}. Comm. Partial
Differential Equations \textbf{29} (2004), no. 1-2, 295--322.

\bibitem[Au1]{Au1} T. Aubin. \textit{Probl\`{e}mes isop\'{e}rim\'{e}triques
et espaces de Sobolev. (French)} J. Differential Geometry \textbf{11}
(1976), no. 4, 573--598.

\bibitem[Au2]{Au2} T. Aubin. \textit{Meilleures constantes dans le th\'{e}or%
\`{e}me d'inclusion de Sobolev et un th\'{e}or\`{e}me de Fredholm non lin%
\'{e}aire pour la transformation conforme de la courbure scalaire. (French)}
J. Functional Analysis \textbf{32} (1979), no. 2, 148--174.

\bibitem[CCH]{CCH} R. Cerny, A. Cianchi and S. Hencl.\textit{\
Concentration-compactness principles for Moser-Trudinger inequalities: new
results and proofs.} (English summary) Ann. Mat. Pura Appl. (4) \textbf{192}
(2013), no. 2, 225--243.

\bibitem[CH]{CH} S. Y. Chang and F. B. Hang. Improved Moser-Trudinger-Onofri
inequality under constraints. Comm Pure Appl Math, to appear,
arXiv:1909.00431

\bibitem[CY1]{CY1} S. Y. Chang and P. C. Yang. \textit{Conformal deformation
of metrics on }$\mathbb{S}^{2}$. J. Differential Geom. \textbf{27} (1988),
no. 2, 259--296.

\bibitem[CY2]{CY2} S. Y. Chang and P. C. Yang. \textit{Extremal metrics of
zeta function determinants on 4-manifolds.} Ann. of Math. (2) \textbf{142}
(1995), no. 1, 171--212.

\bibitem[Ci]{Ci} A. Cianchi. \textit{Moser-Trudinger inequalities without
boundary conditions and isoperimetric problems.} Indiana Univ. Math. J. 
\textbf{54} (2005), no. 3, 669--705.

\bibitem[F]{F} L. Fontana. \textit{Sharp borderline Sobolev inequalities on
compact Riemannian manifolds.} Comment. Math. Helv. \textbf{68} (1993), no.
3, 415--454.

\bibitem[GiT]{GiT} D. Gilbarg and N. S. Trudinger. Elliptic partial
differential equations of second order. Reprint of the 1998 edition.
Classics in Mathematics. Springer-Verlag, Berlin, 2001. xiv+517 pp. ISBN:
3-540-41160-7

\bibitem[GrS]{GrS} U. Grenander and G. Szego. Toeplitz forms and their
applications. California Monographs in Mathematical Sciences University of
California Press, Berkeley-Los Angeles 1958 vii+245 pp.

\bibitem[Gu]{Gu} M. J. Gursky. \textit{The principal eigenvalue of a
conformally invariant differential operator.} Comm. Math. Phys. \textbf{207}
(1999), no. 1, 131--143.

\bibitem[GuV]{GuV} M. J. Gursky and J. Viaclovsky. \textit{A fully nonlinear
equation on four-manifolds with positive scalar curvature}. J. Differential
Geom. \textbf{63} (2003), no. 1, 131--154.

\bibitem[H]{H} F. B. Hang. A remark on the concentration compactness
principle in critical dimension. Comm Pure Appl Math, to appear,
arXiv:2002.09870

\bibitem[HY1]{HY1} F. B. Hang and P. C. Yang. \textit{Sign of Green's
function of Paneitz operators and the Q\ curvature}. Int. Math. Res. Not.
IMRN \textbf{2015}, no. 19, 9775--9791.

\bibitem[HY2]{HY2} F. B. Hang and P. C. Yang. \textit{Lectures on the
fourth-order Q\ curvature equation.} Lect. Notes Ser. Inst. Math. Sci. Natl.
Univ. Singap., \textbf{31} (2016), 1--33.

\bibitem[He]{He} E. Hebey. Nonlinear analysis on manifolds: Sobolev spaces
and inequalities. Courant Lecture Notes in Mathematics, 5. New York
University, Courant Institute of Mathematical Sciences, New York; American
Mathematical Society, Providence, RI, 1999.

\bibitem[Li1]{Li1} Y. X. Li. \textit{Moser-Trudinger inequality on compact
Riemannian manifolds of dimension two.} J. Partial Differential Equations 
\textbf{14} (2001), no. 2, 163--192.

\bibitem[Li2]{Li2} Y. X. Li. \textit{Extremal functions for the
Moser-Trudinger inequalities on compact Riemannian manifolds.} Sci. China
Ser. A \textbf{48} (2005), no. 5, 618--648.

\bibitem[Ln1]{Ln1} P. L. Lions. \textit{The concentration-compactness
principle in the calculus of variations. The limit case. I}. Rev. Mat.
Iberoamericana \textbf{1} (1985), no. 1, 145--201.

\bibitem[Ln2]{Ln2} P. L. Lions. \textit{The concentration-compactness
principle in the calculus of variations. The limit case. II.} Rev. Mat.
Iberoamericana \textbf{1} (1985), no. 2, 45--121.

\bibitem[LY]{LY} G. Z. Lu and Y. Y. Yang. \textit{Adams' inequalities for
bi-Laplacian and extremal functions in dimension four.} Adv. Math. \textbf{%
220} (2009), no. 4, 1135--1170.

\bibitem[M]{M} J. Moser. \textit{A sharp form of an inequality by N.
Trudinger.} Indiana Univ. Math. J. \textbf{20} (1970/71), 1077--1092.

\bibitem[N1]{N1} V. H. Nguyen. \textit{Improved Moser-Trudinger inequality
for functions with mean value zero in }$\mathbb{R}^{n}$\textit{\ and its
extremal functions}. Nonlinear Anal. \textbf{163} (2017), 127--145.

\bibitem[N2]{N2} V. H. Nguyen. \textit{Improved Moser-Trudinger inequality
of Tintarev type in dimension n and the existence of its extremal functions}%
. Ann. Global Anal. Geom. \textbf{54} (2018), no. 2, 237--256.

\bibitem[OPS]{OPS} B. Osgood, R. Phillips and P. Sarnak. \textit{Extremals
of determinants of Laplacians.} J. Funct. Anal. \textbf{80} (1988), no. 1,
148--211.

\bibitem[T]{T} C. Tintarev. \textit{Trudinger-Moser inequality with
remainder terms}. J. Funct. Anal. \textbf{266} (2014), no. 1, 55--66.

\bibitem[W]{W} H. Widom. \textit{On an inequality of Osgood, Phillips and
Sarnak.} Proc. Amer. Math. Soc. \textbf{102} (1988), no. 3, 773--774.

\bibitem[Y1]{Y1} Y. Y. Yang. \textit{A sharp form of Moser-Trudinger
inequality in high dimension}. J. Funct. Anal. \textbf{239} (2006), no. 1,
100--126.

\bibitem[Y2]{Y2} Y. Y. Yang. \textit{On a sharp inequality of L. Fontana for
compact Riemannian manifolds.} Manuscripta Math. \textbf{157} (2018), no.
1-2, 51--79.
\end{thebibliography}
\end{document}